\newtheorem{theorem}{Theorem}
\newtheorem{corollary}[theorem]{Corollary}
\newtheorem{lemma}[theorem]{Lemma}
\newtheorem{conjecture}[theorem]{Conjecture}
\newtheorem{proposition}[theorem]{Proposition}
\newcommand{\marrow}{\marginpar[\hfill$\longrightarrow$]{$\longleftarrow$}}
\newcommand{\niceremarkcolor}[4]{\textcolor{#4}{\textsc{#1 #2:} \marrow\textsf{#3}}}
\newcommand{\aida}[2][says]{\niceremarkcolor{Aida}{#1}{#2}{blue}}
\newcommand{\luuk}[2][says]{\niceremarkcolor{Luuk}{#1}{#2}{orange}}
\title{Eigenvalue bounds for the distance-$t$ chromatic number of a graph and their application to Lee codes}
\author{Aida Abiad\thanks{\texttt{a.abiad.monge@tue.nl}, Department of Mathematics and Computer Science, Eindhoven University of Technology, The Netherlands} \and
Alessandro Neri\thanks{\texttt{alessandro.neri@ugent.be}, Department of Mathematics: Analysis, Logic and Discrete Mathematics, Ghent University, Belgium}
\and
Luuk Reijnders\thanks{\texttt{l.e.r.m.reijnders@student.tue.nl}, Department of Mathematics and Computer Science, Eindhoven University of Technology, The Netherlands}}
\date{}
\begin{document}

\maketitle

\begin{abstract}
We derive eigenvalue bounds for the $t$-distance chromatic number of a graph, which is a generalization of the classical chromatic number.  We apply such bounds to hypercube graphs, providing alternative spectral proofs for results by Ngo, Du and Graham [\emph{Inf. Process. Lett.}, 2002], and improving their bound for several instances. We also apply the eigenvalue bounds to Lee graphs, extending results by Kim and Kim [\emph{Discrete Appl. Math.}, 2011]. Finally, we provide a complete characterization for the existence of perfect Lee codes of minimum distance $3$. In order to prove our results, we use a mix of spectral and number theory tools. Our results, which provide the first application of spectral methods to Lee codes, illustrate that such methods succeed to capture the nature of the Lee metric.
\end{abstract}

\paragraph{Keywords:} Distance chromatic number; Eigenvalues; Hypercube graph; Lee graph; Perfect Lee code



\section{Introduction}

In the past, various distance based colorings have been investigated in the literature. Distance based coloring was first studied in 1969 by Kramer and Kramer \cite{KK1969,KK1969v2}, when they introduced the notion of a \emph{$t$-distance coloring}, for some natural number $t$. In this type of coloring, we require that vertices at distance at most $t$ receive distinct colors. When $t=1$, we recover the classical vertex coloring.  Throughout the years, $t$-distance colorings, in particular the case when $t=2$, became a focus for many researchers; see e.g. \cite{BI2012, H2009,HHDR2017,T2018}.

The \emph{distance-$t$ chromatic number} of a graph $G$, denoted as $\chi_t(G)$, is the smallest number of colours required in a $t$-distance coloring.
For a positive integer $t$, the \emph{$t$-th power of a graph} $G=(V,E)$ on, denoted by $G^t$, is a graph with vertex set $V$ in which two distinct elements of $V$ are joined by an edge if there is a path in $G$ of length at most $t$ between them. Thus, the distance-$t$ chromatic number is equivalently defined as the chromatic number of the $t$-th power graph, that is, $\chi_t(G) = \chi_1(G^t)$. However, even the simplest algebraic or combinatorial parameters of power graph $G^t$ cannot be deduced easily from the corresponding parameters of the graph $G$. 
For instance, neither the spectrum~\cite{Das2013LaplacianGraph},~\cite[Section 2]{ACFNS2022}, nor the average degree~\cite{Devos2013AveragePowers}, nor the rainbow connection number~\cite{Basavaraju2014RainbowProducts} of $G^t$ can be derived in general directly from those of the original graph $G$.

The above, plus the fact that this parameter is known to be NP-hard to compute \cite{LS1995}, provides the initial motivation for the first part of this work, where several eigenvalue polynomial bounds on $\chi_t(G)$ will be derived and optimized, analogously as it was done for the $t$-\emph{independence number} $\alpha_t(G)$ (largest maximum size of a set of vertices at pairwise distance greater than $t$) in \cite{ACF2019}. In particular, we extend the best Ratio-type bound on $\alpha_2(G)$ for regular graphs \cite[Corollary 3.3]{ACF2019} to $\chi_2(G)$ of general graphs, and show its optimality.

In the second part of this paper, we present several applications of the new spectral bounds on $\chi_t(G)$ to coding theory. In particular, we present several eigenvalue bounds for $\chi_t(Q_n)$ of the hypercube graph $Q_n$, which turn out to give alternative proofs and also bound improvements to known results by Ngo, Du and Graham \cite{ngo_new_2002}. Next, we focus on a generalization of hypercube graphs, the so-called Lee graphs, which are associated with the Lee metric. We investigate combinatorial and spectral properties of Lee graphs, and use them to provide the first eigenvalue bounds for $\chi_2(G(n,q))$ of the Lee graph $G(n,q)$, extending results by Kim and Kim \cite{kim_2-distance_2011}.
We computationally show  that our bounds are tight for several instances, and that they are competitive when we compare them with the optimization bounds for Lee codes previously obtained via linear programming (LP) by Astola and Tabus \cite{astola_bounds_2013} and via semidefinite programming (SDP) by Polak \cite{polak_semidefinite_2019}. Finally, we provide a complete characterization for the existence of (not necessarily linear) perfect Lee codes of minimum distance $3$. Our results demonstrate that spectral methods succeed to capture information about the Lee metric.

This paper is structured as follows. In Section \ref{sec:spec_chit} we provide an overview of the existing spectral bounds on $\chi_t(G)$, and we prove an extension of one of them to non-regular graphs. In Section \ref{sec:chit_Qn} we use the spectral bounds from Section \ref{sec:spec_chit} to obtain an alternative bound on $\chi_t(Q_n)$. While for $t\in\{2,3\}$  we obtain an alternative proof to the bound on $\chi_t(Q_n)$ by Ngo, Du and Graham \cite{ngo_new_2002}, for $t\in\{4,5\}$ our spectral bound sometimes outperforms it. Next, in Section \ref{sec:Gnq_properties}, we show several combinatorial and spectral properties of the  Lee graphs, and using number theory tools, we derive some novel characterizations of the adjacency spectrum of these graphs. In Section \ref{sec:chit_Gnq} we apply our spectral bound on $\chi_t(G)$ from Section \ref{sec:spec_chit} to Lee graphs, extending results by Kim and Kim \cite{kim_2-distance_2011}. Finally, we provide a complete characterization for the existence of (not necessarily linear) perfect Lee codes of minimum distance $3$ in Section \ref{sec:existenceperfectLeecodes}.

\section{Eigenvalue bounds for the distance chromatic number of a graph}\label{sec:spec_chit}

In this section we give an overview of several spectral (Ratio-type) bounds on $\chi_t(G)$. This was first investigated in \cite{ACF2019} for regular graphs and for the so-called $t$-\emph{independence number} $\alpha_t(G)$ (independence number of $G^t$), and this result was extended in \cite{ACFNS2022} for $\chi_t(G)$ for general graphs. Indeed, the distance-$t$ chromatic number of a graph is directly related to the $t$-independence number via the following inequality: 
\begin{equation}\label{lem:alpha_chi_relation}
    \chi_t(G) \geq \frac{|V|}{\alpha_t(G)}. 
\end{equation}

In addition to this general bound on $\chi_t(G)$, we obtain a slightly stronger version which holds for regular graphs; see Theorem \ref{thm:ratio_chi_reg}. Moreover, we also present closed formulas for the best (general) Ratio-type bounds for $\chi_2(G)$ (see Theorem \ref{thm:ratio_chi_2} and Corollary \ref{cor:ratio_chi_2_reg}) and $\chi_3(G)$ (see Corollary \ref{cor:ratio_chi_3_reg}), based on similar results for $\alpha_2(G)$ and $\alpha_3(G)$ from \cite{ACF2019} and \cite{kavi_optimal_2023}, respectively.

    We start by stating a Ratio-type bound for $\chi_t(G)$ which holds for general graphs and which was shown by Abiad et al. \cite{ACFNS2022} using weight partitions and interlacing. Denote by $\mathbb{R}_t[x]$ the set of polynomials in the variable $x$ with coefficients in $\mathbb{R}$ and degree at most $t$.
    
    \begin{theorem}[General Ratio-type bound {\cite[Theorem 4.3]{ACFNS2022}}]\label{thm:ratio_chi}
        Let $G$ be a graph with $n$ vertices and eigenvalues $\lambda_1 \geq \lambda_2 \geq \cdots \geq \lambda_n$ and adjacency matrix $A$. Let $p \in \mathbb{R}_t[x]$ with corresponding parameters $W(p) := \max_{u \in V}$ $\{(p(A))_{uu}\}$ and $ \lambda(p):= \min_{i\in[2,n]} \{p(\lambda_i)\}$, and assume $p(\lambda_1) > \lambda(p)$. Then, 
        \begin{equation}\label{eq:ratio_chi}
            \chi_t(G) \geq \frac{p(\lambda_1) - \lambda(p)}{W(p) - \lambda(p)}.
        \end{equation}
    \end{theorem}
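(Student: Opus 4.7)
The plan is to adapt the classical Hoffman--Haemers coloring bound by working with $S := p(A)$ in place of $A$ and by forming a Perron-weighted quotient matrix of $S$ with respect to an optimal $t$-distance coloring of $G$. The key structural input is that $\deg p \le t$ forces $(p(A))_{uv} = 0$ whenever the graph distance $d(u,v)$ exceeds $t$, since $(A^s)_{uv}$ counts walks of length $s$ and no walk of length at most $t$ can join such a pair. This is precisely what will let each color class play the role of an independent set in a Hoffman-style argument.

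First I would fix an optimal coloring $\{C_1, \ldots, C_k\}$ with $k = \chi_t(G)$ classes, so that any two distinct vertices in a common class sit at distance greater than $t$ in $G$. Let $v_1 > 0$ be a Perron eigenvector of $A$ for $\lambda_1$, and for each $i$ let $\mathbf{e}_i \in \mathbb{R}^n$ be the unit vector supported on $C_i$ with entries proportional to $v_1$. The vectors $\mathbf{e}_1, \ldots, \mathbf{e}_k$ are orthonormal, and I would form the weighted quotient $\tilde B := E^\top p(A)\, E$ with $E = [\mathbf{e}_1 \mid \cdots \mid \mathbf{e}_k]$; this is a symmetric $k\times k$ matrix whose eigenvalues $\theta_1 \ge \cdots \ge \theta_k$, by Cauchy interlacing, interlace those of $p(A)$.

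The bulk of the argument then reduces to three facts about $\tilde B$. (i) Writing $v_1 = \sum_i \alpha_i \mathbf{e}_i$ with positive $\alpha_i$ and using $p(A)v_1 = p(\lambda_1)v_1$, one checks that $\tilde B\boldsymbol\alpha = p(\lambda_1)\boldsymbol\alpha$, so $p(\lambda_1)$ is an eigenvalue of $\tilde B$ and $\theta_1 \ge p(\lambda_1)$. (ii) Since $(p(A))_{uv} = 0$ for distinct $u,v$ in a common class, each diagonal entry $\tilde B_{ii}$ collapses to a convex combination of the values $(p(A))_{uu}$ for $u \in C_i$, giving $\tilde B_{ii}\le W(p)$ and $\operatorname{tr}(\tilde B)\le k\,W(p)$. (iii) The hypothesis $p(\lambda_1) > \lambda(p)$ means the minimum eigenvalue of $p(A)$ is exactly $\lambda(p)$, so Cauchy interlacing forces $\theta_i \ge \lambda(p)$ for $i \ge 2$. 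Putting these together,
\[
p(\lambda_1) + (k-1)\,\lambda(p) \;\le\; \operatorname{tr}(\tilde B) \;\le\; k\,W(p),
\]
and rearranging yields the claimed inequality.

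The main subtlety is ingredient (i) in the non-regular case: an unweighted quotient of $p(A)$ only yields the upper Cauchy bound $\theta_1 \le \mu_1$, which points the wrong way. The Perron weighting of the $\mathbf{e}_i$ is precisely what turns the positive coordinate vector $\boldsymbol\alpha$ into an eigenvector of $\tilde B$ at eigenvalue $p(\lambda_1)$, and verifying that this weighting remains compatible with the within-class vanishing $(p(A))_{uv}=0$ is where the weight-partition machinery of \cite{ACFNS2022} enters.
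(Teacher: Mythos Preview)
Your proposal is correct and follows essentially the same approach that the paper attributes to \cite{ACFNS2022}: the paper does not reprove the result but cites it as having been obtained ``using weight partitions and interlacing,'' which is precisely your Perron-weighted quotient of $p(A)$ combined with Cauchy interlacing. The three ingredients you isolate---$(p(A))_{uv}=0$ across a color class since $\deg p\le t$, the Perron weighting making $p(\lambda_1)$ an eigenvalue of the quotient, and interlacing against $\lambda(p)$---are exactly the mechanism behind the weight-partition proof in \cite{ACFNS2022}, so there is no substantive difference to discuss.
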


    In addition to Theorem \ref{thm:ratio_chi} which holds for general graphs, we can also obtain a slightly stronger bound for regular graphs, by using the original Ratio-type bound on $\alpha_t(G)$ \cite[Theorem 3.2]{ACF2019}, followed by the relation \eqref{lem:alpha_chi_relation}. This will be the bound that we will use in coming sections for the applications to coding theory.
    
    \begin{theorem}[Ratio-type bound]\label{thm:ratio_chi_reg}
        Let $G$ be a regular graph with $n$ vertices and eigenvalues $\lambda_1 \geq \lambda_2 \geq \cdots \geq \lambda_n$ and adjacency matrix $A$. Let $p \in \mathbb{R}_t[x]$ with corresponding parameters $W(p) := \max_{u \in V}$ $\{(p(A))_{uu}\}$ and $ \lambda(p):= \min_{i\in[2,n]} \{p(\lambda_i)\}$, and assume $p(\lambda_1) > \lambda(p)$. Then, the distance-$t$ chromatic number of $G$ satisfies the bound
        \begin{equation}\label{eq:ratio_chi_regular}
            \chi_t(G) \geq \frac{n}{\left\lfloor n\frac{W(p) - \lambda(p)}{p(\lambda_1) - \lambda(p)}\right\rfloor}. 
        \end{equation}
    \end{theorem}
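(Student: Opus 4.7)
The plan is to chain together two ingredients already available: the Ratio-type bound for the $t$-independence number $\alpha_t(G)$ from \cite[Theorem 3.2]{ACF2019}, and the basic inequality \eqref{lem:alpha_chi_relation} that links $\chi_t(G)$ with $\alpha_t(G)$. The idea is that the independence-number bound produces a real-valued upper bound, but since $\alpha_t(G)$ is an integer, we can take the floor, and then inverting via \eqref{lem:alpha_chi_relation} yields the slightly stronger, \emph{integer-aware} bound \eqref{eq:ratio_chi_regular}.

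Concretely, I would first invoke \cite[Theorem 3.2]{ACF2019}: under the hypotheses on $p$ (polynomial of degree at most $t$, $p(\lambda_1)>\lambda(p)$), and using regularity of $G$ so that the all-ones vector is an eigenvector for $\lambda_1$ and the $\lambda_i$ for $i\geq 2$ are the relevant ``other'' eigenvalues, one obtains
\[
\alpha_t(G) \;\leq\; n\,\frac{W(p)-\lambda(p)}{p(\lambda_1)-\lambda(p)}.
\]
Since the left-hand side is a non-negative integer and the right-hand side is strictly positive, this inequality tightens to
\[
\alpha_t(G) \;\leq\; \left\lfloor n\,\frac{W(p)-\lambda(p)}{p(\lambda_1)-\lambda(p)}\right\rfloor.
\]
Combining this with \eqref{lem:alpha_chi_relation}, which gives $\chi_t(G)\geq n/\alpha_t(G)$ (and which in turn follows from the fact that every color class in a $t$-distance coloring is an independent set in $G^t$, and hence has size at most $\alpha_t(G)$), one immediately obtains \eqref{eq:ratio_chi_regular}.

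There is essentially no computational obstacle: the work has been done in the two cited results and the rest is a short chain of inequalities and an integrality rounding. The only point that requires a small amount of care is justifying that the floor step is legitimate, i.e.\ that the bound on $\alpha_t(G)$ from \cite[Theorem 3.2]{ACF2019} really yields an upper bound on an integer quantity (so that taking the floor is valid and, in the case where $\alpha_t(G)$ does not already achieve the real-valued bound with equality, strictly improves it). One should also note that regularity is used crucially when importing the Ratio-type bound on $\alpha_t(G)$, which is why this stronger form is stated only for regular graphs, in contrast to the more general Theorem~\ref{thm:ratio_chi}.
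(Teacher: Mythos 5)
Your proposal is correct and follows exactly the paper's own (one-line) proof: apply the Ratio-type bound on $\alpha_t(G)$ from \cite[Theorem 3.2]{ACF2019}, use integrality of $\alpha_t(G)$ to take the floor, and then conclude via \eqref{lem:alpha_chi_relation}. The only difference is that you spell out the integrality-rounding step, which the paper leaves implicit.
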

    \begin{proof}
        Apply \cite[Theorem 3.2]{ACF2019} to $G$, followed by \eqref{lem:alpha_chi_relation}.
    \end{proof}

\subsection{Optimization of the Ratio-type bounds}  
In the following, we propose the linear optimization implementation of the previously seen Ratio-type bounds on $\chi_t(G)$ from Theorem \ref{thm:ratio_chi} and Theorem \ref{thm:ratio_chi_reg}. The latter will be one of the main tools in Sections \ref{sec:chit_Qn} and \ref{sec:chit_Gnq}. 
\subsubsection*{LP for the General Ratio-type bound for $\chi_t(G)$ (Theorem \ref{thm:ratio_chi})}\label{LP:chitgeneralgraph}

    Let $G = (V,E)$ have adjacency matrix $A$ and distinct eigenvalues $\theta_0 > \cdots > \theta_d$. Note that we can scale by a positive number and translate the polynomial used in Theorem \ref{thm:ratio_chi} without changing the value of the bound. Hence, we can assume $W(p) -\lambda(p) =1$. Furthermore, ~$\lambda(p_k) < W(p_k)$, so the scaling does not flip the sign of the bound. Hence, the problem reduces to finding the $p$ which maximizes $p(\lambda_1) - \lambda(p)$. For each~$u\in V$ and~$\ell \in [1, d]$, assume that~$W(p_k) = (p_k(A))_{uu}$,~$0=\lambda(p_k) = p_k(\theta_\ell)$ and solve the Linear Program (LP) below. The maximum of these~$dn$ solutions then equals the best possible bound obtained by Theorem \ref{thm:ratio_chi}.

   {\small{ 
    \begin{equation}\label{eq:ratio_chi_LP}
        \boxed{\begin{aligned}
            \text{variables: } &(a_0, \hdots,a_t) \\
            \text{input: }&\text{The adjacency matrix } A \text{ and eigenvalues } \{\theta_0,\hdots,\theta_d\} \text{ of a graph }G. \\
            &\text{A vertex } u \in V, \text{ an } \ell \in [1,d]. \text{ An integer } t.\\
            \text{output: }&(a_0, \hdots,a_t) \text{, the coefficients of a polynomial } p \\ 
            \ \\
            \text{maximize} \ \ &\sum_{i=0}^t a_i \theta_0^i -\sum_{i=0}^t a_i\theta_\ell^i\\
            \text{subject to} \ \ & \sum_{i=0}^t a_i((A^i)_{vv} - (A^i)_{uu}) \leq 0 , \ v \in V \setminus \{u\} \\
            &\sum_{i=0}^t a_i((A^i)_{uu} - \theta_\ell^i) = 1\\
            &\sum_{i=0}^t a_i(\theta_0^i - \theta_j^i) > 0, \ \ j \in [1,d] \\
            &\sum_{i=0}^t a_i(\theta_j^i - \theta_\ell^i) \geq 0, \ \ j \in [1,d]
        \end{aligned}}
    \end{equation}
    }}
    
    Here the objective function is simply $p(\lambda_1) - \lambda(p)$. The first constraint says $(p(A))_{uu} \geq (p(A))_{vv}$ for all vertices $v \neq u$, which ensures $W(p) = (p(A))_{uu}$. The second constraint gives $p$ the correct scaling and translation such that $W(p) - \lambda(p) = 1$. The third constraint says $p(\theta_0) > p(\theta_j)$ for all $j \in [1,d]$, which ensures $p(\lambda_1) > \lambda(p)$. And the final constraint says $p(\theta_\ell) \leq p(\theta_j)$ for all $j \in [1,d]$, which ensures $\lambda(p) = p(\theta_\ell)$.

\subsubsection*{LP for the Ratio-type bound for $\chi_t(G)$ (Theorem \ref{thm:ratio_chi_reg}) for walk-regular graphs} \label{minorpolysLP:chitwrgraph} 
 
We have already seen that the assumption of regularity let us obtain Theorem \ref{thm:ratio_chi_reg}, which is slightly stronger of Theorem \ref{thm:ratio_chi}, the latter of which holds in general. In this same vein, regularity assumptions allow us to obtain a Linear Program (LP) \eqref{eq:ratio_chi_LP_walk} for Theorem \ref{thm:ratio_chi_reg}, which in practice is much faster than LP \eqref{eq:ratio_chi_LP}, computationally speaking.

The LP we are going to describe is analogous to the LP from \cite{F2020} for $\alpha_t(G)$, but is repeated here for completeness. As such, it also only applies to $t$-partially walk-regular graphs. A graph $G$ is called {\em $t$-partially walk-regular} for some integer $t\ge 0$, if the number of closed walks of a given length $l\le t$, rooted at a vertex $v$, only depends on $l$. In other words, if $G$ is a $t$-partially walk-regular graph, then for any polynomial $p\in \mathbb{R}_t[x]$ the diagonal of $p(A)$ is constant with entries.
Given a $t$-partially walk regular graph with adjacency matrix $A$, for every $p\in \mathbb R_t[x]$ the diagonal of $p(A)$ is constant with entries
$$
(p(A))_{uu}=\frac{1}{n}\tr p(A)=\frac{1}{n}\sum_{i=1}^n  p(\lambda_i)\quad \mbox{for all }u\in V.
$$

For instance, every (simple) graph is $t$-partially walk-regular for $t\in\{0,1\}$ and every regular graph is $2$-partially walk-regular.

Through the following LP, we are going to compute so-called \emph{$t$-minor polynomials}, defined by Fiol \cite{F2020}. For a graph $G$ with adjacency matrix $A$ and distinct eigenvalues $\theta_0 > \cdots > \theta_d$, consider the set of real polynomials $\mathcal{P}_t = \{p \in \mathbb{R}_t[x] : p(\theta_0) = 1, p(\theta_i) \geq 0,$ for $i \in [1,d]\}$. Then, a \emph{$t$-minor polynomial} of $G$ is a polynomial $p_t \in \mathcal{P}_t$ such that
\begin{equation*}
    \tr p_t(A) = \min \{\tr p(A) : p \in \mathcal{P}_t\}.
\end{equation*}

     Let  $\theta_0 > \cdots > \theta_d$ be the distinct eigenvalues of $G$, and let $m_0,\hdots,m_d$ be their respective multiplicities. Then, for $t < d$ we have the following LP:
{\small{
    \begin{equation}\label{eq:ratio_chi_LP_walk}
        \boxed{\begin{aligned}
        \text{variables: } &x_1,\hdots,x_d \\
        \text{input: }&\text{An integer } t. \text{ The eigenvalues } \{\theta_0,\hdots,\theta_d\} \text{ and multiplicities } \{m_0,\hdots,m_d\} \\
        &\text{of a $t$-partially walk-regular graph }G.\\
        \text{output: }&\text{A vector } (x_1,\hdots,x_d) \text{ which defines a } t\text{-minor polynomial} \\
        \ \\   
        \text{minimize}  \ \ &\sum_{i=1}^d m_ix_i \\
        \text{subject to}\ \ &f[\theta_0,\hdots,\theta_m]=0, m=t+1,\hdots,d \\
        &x_i \geq 0, i =1,\hdots d
        \end{aligned}}
    \end{equation}
}}
    Note that in \eqref{eq:ratio_chi_LP_walk} $f[\theta_0,\hdots,\theta_m]$ is recursively defined via
    \begin{equation*}
        f[\theta_i,\hdots,\theta_j] := \frac{f[\theta_{i+1},\hdots,\theta_j]-f[\theta_i,\hdots,\theta_{j-1}]}{\theta_j-\theta_i},
    \end{equation*}
    with starting values $f[\theta_i] =x_i$.
    From the solution to this problem we get a vector $(x_1,\hdots,x_d)$ which defines a $t$-minor polynomial by $p_t(\theta_i) = x_i$ (with $x_0 := 1$).

\subsection{Optimality of the Ratio-type bounds for $t=2,3$} 

    Just as for the analogous bounds on $\alpha_t(G)$, Theorems \ref{thm:ratio_chi} and \ref{thm:ratio_chi_reg} actually give a class of bounds, depending on choice of $p$. While we will see that one can easily translate the closed-formula bounds from \cite[Corollary 3.3]{ACF2019} and \cite[Theorem 11]{kavi_optimal_2023} to obtain bounds on $\chi_t(G)$ for regular graphs and $t\in\{2,3\}$, (see Corollaries \ref{cor:ratio_chi_2_reg} and \ref{cor:ratio_chi_3_reg}), this same idea does not work when one drops the regularity assumption. Hence, a new approach is needed for obtaining optimal bounds from Theorem \ref{thm:ratio_chi} for small values of $t$. In the next result we find the optimal polynomial for Theorem \ref{thm:ratio_chi} when $t=2$.

    \begin{theorem}\label{thm:ratio_chi_2}
        Let $G$ be a graph with $n$ vertices and distinct eigenvalues $\theta_0 > \theta_1 > \cdots > \theta_d$ with $d \geq 2$. Let $\Delta$ be the maximum degree of $G$. Let $\theta_i$ be the largest eigenvalue such that $\theta_i \leq -\frac{\Delta}{\theta_0}$. Then,
        \begin{equation}\label{eq:ratio_chi_2}
            \chi_2(G) \geq \frac{(\theta_0 - \theta_i)(\theta_0 - \theta_{i-1})}{\Delta + \theta_i\theta_{i-1}}.
        \end{equation}
        Moreover, this is the best possible bound  that can be obtained from Theorem \ref{thm:ratio_chi} for $t=2$.
    \end{theorem}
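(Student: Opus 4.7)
The plan is to maximize the bound of Theorem~\ref{thm:ratio_chi} explicitly over degree-two polynomials $p(x)=ax^2+bx+c$ and identify the maximizer. The ratio $(p(\theta_0)-\lambda(p))/(W(p)-\lambda(p))$ is invariant under $p\mapsto \alpha p+\beta$ for $\alpha>0$, so I first normalize to $\lambda(p)=0$, equivalently $p(\theta_j)\ge 0$ for every $j\in[1,d]$ with equality for at least one index. Since $A_{uu}=0$ and $(A^2)_{uu}=\deg(u)$ in any simple graph, $(p(A))_{uu}=a\deg(u)+c$, and in the case $a>0$ one gets $W(p)=a\Delta+c$. Writing $p(x)=a(x-r_1)(x-r_2)$ with real roots $r_1\le r_2$, the bound collapses to
\begin{equation*}
\chi_2(G)\ge \frac{(\theta_0-r_1)(\theta_0-r_2)}{\Delta+r_1r_2},
\end{equation*}
independent of $a$, while the normalization translates into the constraint that $(r_1,r_2)$ lies in a closed gap $[\theta_k,\theta_{k-1}]$ between consecutive distinct eigenvalues, with at least one endpoint equal to an eigenvalue.

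The heart of the optimization is the derivative computation
\begin{equation*}
\frac{\partial}{\partial r_1}\!\left[\frac{(\theta_0-r_1)(\theta_0-r_2)}{\Delta+r_1r_2}\right] = -\frac{(\theta_0-r_2)(\Delta+\theta_0 r_2)}{(\Delta+r_1r_2)^2},
\end{equation*}
together with its symmetric counterpart in $r_2$. Since $\theta_0>r_j$, the sign depends solely on that of $\Delta+\theta_0r_j$, i.e.\ on the position of $r_j$ relative to the critical value $-\Delta/\theta_0$. Strict monotonicity in each variable forces the maximum over any gap $[\theta_k,\theta_{k-1}]$ to be attained at a corner $(\theta_k,\theta_{k-1})$ or at a double root $(\theta_k,\theta_k)$. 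A short algebraic comparison, whose key numerators factor as $(\theta_{k-1}-\theta_k)(\Delta+\theta_0\theta_{k-1})$ and $(\theta_k-\theta_{k-1})(\Delta+\theta_0\theta_k)$, then shows that the corner of the gap containing $-\Delta/\theta_0$ dominates both of its adjacent double roots, and in turn dominates every other double root by the monotonicity of $s\mapsto(\theta_0-s)^2/(\Delta+s^2)$ away from $-\Delta/\theta_0$. This forces $r_1=\theta_i$ and $r_2=\theta_{i-1}$, with $\theta_i$ the largest eigenvalue satisfying $\theta_i\le -\Delta/\theta_0$, and substitution yields the claimed formula.

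The main obstacle I foresee is the systematic handling of the degenerate and boundary cases rather than the calculus itself. The linear sub-case $a=0$ reproduces the Hoffman-type bound for $\chi(G)$ and arises as a limit of the analysis above on the outer gaps $(-\infty,\theta_d]$ and $[\theta_1,+\infty)$; the downward-parabola case $a<0$, for which $W(p)$ is controlled by the minimum degree rather than $\Delta$, requires a short separate argument showing it cannot improve on the $a>0$ optimum. Finally, one must verify that the critical value $-\Delta/\theta_0$ actually lies in $[\theta_d,\theta_1]$, so that the chosen gap is interior, which follows from the standard inequalities $\sqrt{\Delta}\le\theta_0\le\Delta$; and that the denominator $\Delta+\theta_i\theta_{i-1}$ is strictly positive, which follows from $\theta_{i-1}>-\Delta/\theta_0$ together with $\theta_i\ge -\theta_0$ and $\theta_{i-1}\le \theta_0$. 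Once these are in place, optimality is immediate because a global maximizer of \eqref{eq:ratio_chi} over the feasible space of quadratic polynomials has been exhibited.
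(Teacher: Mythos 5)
Your proposal follows essentially the same route as the paper's proof: normalize away the affine freedom in $p$, reduce to an explicit calculus optimization in which the sign of $\Delta+\theta_0 r$ (for $r$ a root of $p$) governs the monotonicity and pushes the optimum to the corner of the eigenvalue gap containing $-\Delta/\theta_0$, and treat the downward-parabola case separately; parametrizing by the two roots $(r_1,r_2)$ instead of by the linear coefficient is only a cosmetic difference. The one genuine flaw is your justification that an eigenvalue $\theta_i\le-\Delta/\theta_0$ exists: the inequalities $\sqrt{\Delta}\le\theta_0\le\Delta$ give $-\Delta/\theta_0\ge-\theta_0$ and $\theta_d\ge-\theta_0$, which do not order $\theta_d$ against $-\Delta/\theta_0$. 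What is actually needed is the nontrivial fact $\theta_0\theta_d\le-\Delta$, which the paper imports from \cite{ADF2024}; without it the eigenvalue $\theta_i$ in the statement need not exist and the ``gap containing $-\Delta/\theta_0$'' need not be interior. With that fact supplied your sketch is sound, though be aware that the comparison showing the $a<0$ case (and its $a=0$ Hoffman-type limit) cannot beat the quadratic optimum is only asserted in your plan, while it occupies roughly half of the paper's argument.
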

    \begin{proof}
        Let $p \in \mathbb{R}_2[x]$ be as follows:
        \begin{equation*}
            p(x) = x^2 -(\theta_i+\theta_{i-1})x.
        \end{equation*}
        Then $p(\lambda_1) = (\theta_0-\theta_i)(\theta_0-\theta_{i-1})$ and $W(p) = \Delta$. Furthermore, $p(x)$ has its global minimum at $\frac{\theta_i+\theta_{i-1}}{2}$, hence $\theta_i$ (or analogously $\theta_{i-1}$) realize $\lambda(p)$. Thus $\lambda(p) = -\theta_i\theta_{i-1}$. Plugging these values into \eqref{eq:ratio_chi} gives the desired bound.
        
        To show optimality, we follow an analogous approach to the one by Abiad, Coutinho and Fiol \cite[Corollary 3.3]{ACF2019}. We further make use of the fact that an eigenvalue as proposed, $\theta_i \leq -\frac{\Delta}{\theta_0}$, always exists \cite{ADF2024}. 

        Take some polynomial $p(x) \in \mathbb{R}_2[x]$, and denote $p(x) = ax^2 + bx + c$. Assume, $a>0$ first. Note that $a=0$ trivially leads to $p(x)=x$ by the upcoming observations. The case $a<0$ will be investigated after. We observe that the bound in Theorem \ref{thm:ratio_chi} does not change under translation or positive scaling, and hence we can assume without loss of generality that $a=1$ and $c=0$. Then, $p(x) = x^2 + bx$ has its global minimum at $-\frac{b}{2}$. Since $p$ is a parabola, the eigenvalue $\theta_i$ for which $p(\theta_i)$ is minimal will be the one closest to $-\frac{b}{2}$. Now, we have two cases
        \begin{itemize}
            \item If $\theta_i \neq \theta_d$ then we get
            \begin{equation*}
                \frac{\theta_i+\theta_{i+1}}{2} \leq \frac{-b}{2} \leq \frac{\theta_i+\theta_{i-1}}{2},
            \end{equation*}
            in which case we know $b = -\theta_i+\tau$ for $-\theta_{i-1} \leq \tau \leq-\theta_{i+1}$.
            \item If $\theta_i = \theta_d$, then we get
            \begin{equation*}
                \frac{-b}{2} \leq \frac{\theta_d+\theta_{d-1}}{2},
            \end{equation*}
            in which case we know $b = -\theta_i+\tau$ for $-\theta_{d-1} \leq \tau$.
        \end{itemize}
        In either case, we find:
        \begin{align*}
             W(p) &= \Delta, \\
            \lambda(p) &=p(\theta_i) = \theta_i^2+( -\theta_i+\tau)\theta_i = \tau\theta_i.
        \end{align*}
        We now write the bound in \eqref{eq:ratio_chi} as a function of $\tau$
        \begin{equation*}
            \Phi(\tau) = \frac{\theta_0^2+(\tau-\theta_i)\theta_0 - \tau\theta_i}{\Delta - \tau\theta_i} = \frac{(\theta_0-\theta_i)(\theta_0 + \tau)}{\Delta - \tau\theta_i}.
        \end{equation*}
        This has derivative
        \begin{equation*}
            \Phi'(\tau) =\frac{(\Delta - \tau\theta_i)(\theta_0-\theta_i) - \theta_i(\theta_0-\theta_i)(\theta_0 + \tau)}{(\Delta - \tau\theta_i)^2} = \frac{(\Delta+\theta_i\theta_0)(\theta_0 -\theta_i)}{(\Delta - \tau\theta_i)^2}.
        \end{equation*}
        Since $\theta_0 > \theta_i$, we find that $\Phi(\tau)$ is decreasing if $\theta_i < -\frac{\Delta}{\theta_0}$, constant if $\theta_i = -\frac{\Delta}{\theta_0}$ and increasing if $\theta_i > -\frac{\Delta}{\theta_0}$.
        
        We investigate these cases individually. Recall, we want to find the best \emph{lower} bound, and thus we want to maximize $\Phi(\tau)$. 
        \begin{enumerate}
            \item[i.] If $\theta_i < -\frac{\Delta}{\theta_0}$, then $\Phi(\tau)$ is decreasing, thus we want to take $\tau$ minimal, i.e.\ $\tau = -\theta_{i-1}$. In this case $p(x) = x^2 -(\theta_i + \theta_{i-1})x$, which gives us the following expression for \eqref{eq:ratio_chi} as a function of $\theta_{i-1}$:
            \begin{equation*}
               \Phi(\theta_{i-1}) = \frac{(\theta_0 - \theta_i)(\theta_0 - \theta_{i-1})}{\Delta + \theta_i\theta_{i-1}}.
            \end{equation*}
             Now we just need to optimize the above over $\theta_i$. We do this by taking the derivative with respect to $\theta_{i-1}$:
            \begin{align*}
                \Phi'(\theta_{i-1}) &=\frac{-(\theta_0-\theta_{i-1})(\Delta + \theta_i\theta_{i-1})-\theta_{i}(\theta_0 - \theta_i)(\theta_0 - \theta_{i-1})}{(\Delta + \theta_i\theta_{i-1})^2}\\
                &= \frac{-(\theta_0-\theta_{i})(\Delta+\theta_{i}\theta_0)}{(\Delta + \theta_i\theta_{i-1})^2}.
            \end{align*}
            Since we assumed $\theta_i < -\frac{\Delta}{\theta_0}$, we find the above is increasing in $\theta_{i-1}$, and hence we must take $\theta_{i-1}$ maximal. This leads to $\theta_i$ being the largest eigenvalue less than $-\frac{\Delta}{\theta_0}$.

            \item[ii.] If $\theta_i = -\frac{\Delta}{\theta_0}$, then $\theta_{i-1} > -\frac{\Delta}{\theta_0}$. Now since $\Phi(\tau)$ is constant, we can take $\tau = -\theta_{i-1}$. Then, $\theta_{i-1}$ minimizes $p(x)$ too, hence we can just call this $\theta_i$, and what used to be $\theta_i$ we call $\theta_{i+1}$ and we are in the next case.
            
            \item[iii.] If $\theta_i > -\frac{\Delta}{\theta_0}$, then $\Phi(\tau)$ is increasing, thus we want to take $\tau$ maximal. Since $\theta_d \leq -\frac{\Delta}{\theta_0}$, we have that $\theta_i \neq \theta_d$, and hence we are in the case where $\tau$ is bounded from above. Therefore, we get $\tau = -\theta_{i+1}$ and hence $p(x) = x^2 -(\theta_i + \theta_{i+1})x$, which gives us the following expression for \eqref{eq:ratio_chi} as a function of $\theta_{i+1}$:
            \begin{equation*}
               \Phi(\theta_{i+1}) = \frac{(\theta_0 - \theta_i)(\theta_0 - \theta_{i+1})}{\Delta + \theta_i\theta_{i+1}}.
            \end{equation*}
            Now we just need to optimize the above over $\theta_i$. We do this by taking the derivative with respect to $\theta_{i+1}$:
            \begin{align*}              \Phi'(\theta_{i+1}) &= \frac{-(\theta_0-\theta_{i+1})(\Delta + \theta_i\theta_{i+1})-\theta_{i}(\theta_0 - \theta_i)(\theta_0 - \theta_{i+1})}{(\Delta + \theta_i\theta_{i+1})^2} \\
                &= \frac{-(\theta_0-\theta_{i})(\Delta+\theta_{i}\theta_0)}{(\Delta + \theta_i\theta_{i+1})^2}.
            \end{align*}
            Since we assumed $\theta_i > -\frac{\Delta}{\theta_0}$, we find the above is decreasing, and hence we must take $\theta_{i+1}$ minimal. This leads to $\theta_{i+1}$ being the largest eigenvalue less than $-\frac{\Delta}{\theta_0}$. Since $\theta_{i+1}$ also minimizes $p(x)$, we can now shift indices similar to how we did in the second case to find the bound
            \begin{equation*}
               \frac{(\theta_0 - \theta_i)(\theta_0 - \theta_{i-1})}{\Delta + \theta_i\theta_{i-1}},
            \end{equation*}
            with $\theta_i$ the largest eigenvalue less than $-\frac{\Delta}{\theta_0}$.
        \end{enumerate}

    Now we return to the case $a < 0$. Here too, we can scale and translate to only have to consider $p(x) = -x^2+bx$. Recall that to apply Theorem \ref{thm:ratio_chi}, we must have $p(\theta_0) > \lambda(p)$. I.e.\ we must have for all $i \in [2,d]$
    \begin{equation*}
        -\theta_0^2 + b\theta_0 > -\theta_i^2 + b\theta_i \iff b> \frac{\theta_0^2-\theta_i^2}{\theta_0-\theta_i} = \theta_0 + \theta_i \implies b > \theta_0 + \theta_d.
    \end{equation*}
    In this case, $\lambda(p) = p(\theta_d) = -\theta_d^2 - b\theta_d$, and $W(p) = -\Delta$, hence we get the following expression for \eqref{eq:ratio_chi} as a function of $b$:
    \begin{equation*}
        \Phi(b) = \frac{-\theta_0^2 +b\theta_0 + \theta_d^2-b\theta_d}{-\delta +\theta_d^2 - b\theta_d} = \frac{b(\theta_0-\theta_d) + \theta_d^2-\theta_0^2}{- b\theta_d - \delta +\theta_d^2}.
    \end{equation*}
    We take the derivative with respect to $b$:
    \begin{align*}
        \Phi'(b) &= \frac{(- b\theta_d - \delta +\theta_d^2)(\theta_0-\theta_d) +\theta_d (b(\theta_0-\theta_d) + \theta_d^2-\theta_0^2)}{(- b\theta_d - \delta +\theta_d^2)^2} \\
        &= \frac{(\theta_d^2-\delta)(\theta_0-\theta_d) +\theta_d (\theta_d^2-\theta_0^2)}{(- b\theta_d - \delta +\theta_d^2)^2}\\
        &= \frac{(\theta_0\theta_d+\delta)(\theta_d-\theta_0)}{(- b\theta_d - \delta +\theta_d^2)^2}.
    \end{align*}
    Clearly, $\theta_d - \theta_0 < 0$. Furthermore, since $\theta_d \leq -1$, and $\theta_0 \geq \delta$ we have $\theta_0\theta_d+\delta \leq 0$, and hence the above derivative in total is nonnegative. This means that we find the best bound when $b \rightarrow \infty$, which gives $\lim_{b \rightarrow \infty} \Phi(b) = \frac{\theta_d-\theta_0}{\theta_d}$. Now it is left to show that this bound is always worse than \eqref{eq:ratio_chi_2}. To do this, note that the derivative of $\frac{\theta_d-\theta_0}{\theta_d}$ with respect to $\theta_d$, is $\frac{\theta_0}{\theta^2_d}$, this means it is increasing. Hence, the best bound will be when $\theta_d = -\frac{\Delta}{\theta_0}$, i.e.\
    \begin{equation*}
        \frac{\theta_0 - \theta_d}{-\theta_d} \leq \frac{\theta_0 + \frac{\Delta}{\theta_0}}{\frac{\Delta}{\theta_0}} = 1 + \frac{\theta^2_0}{\Delta}.
    \end{equation*}
    On the other hand, similar to before, we can take the derivative of \eqref{eq:ratio_chi_2} with respect to $\theta_i$ and observe that \eqref{eq:ratio_chi_2} is minimal when $\theta_i = -\frac{\Delta}{\theta_0}$. This means
    \begin{align*}
        \frac{(\theta_0 - \theta_i)(\theta_0 - \theta_{i-1})}{\Delta + \theta_i\theta_{i-1}} &\geq \frac{(\theta_0 +\frac{\Delta}{\theta_0})(\theta_0 - \theta_{i-1})}{\Delta -\frac{\Delta}{\theta_0} \theta_{i-1}} = \frac{\theta_0^2-\theta_0\theta_{i-1} + \Delta -\frac{\Delta}{\theta_0}\theta_{i-1}}{\Delta -\frac{\Delta}{\theta_0} \theta_{i-1}} \\
        &= 1+\frac{\theta_0^2-\theta_0\theta_{i-1}}{\Delta -\frac{\Delta}{\theta_0} \theta_{i-1}} = 1 + \frac{\theta_0(\theta_0-\theta_{i-1})}{\frac{\Delta}{\theta_0}(\theta_0-\theta_{i-1})} = 1 + \frac{\theta^2_0}{\Delta}.
    \end{align*}
Therefore, \eqref{eq:ratio_chi_2} is always at least as good as $\frac{\theta_d-\theta_0}{\theta_d}$. Thus, we can conclude the best bound that can be obtained from Theorem \ref{thm:ratio_chi} is
        \begin{align*}
           & \chi_2(G) \geq \frac{(\theta_0 - \theta_i)(\theta_0 - \theta_{i-1})}{\Delta + \theta_i\theta_{i-1}}.
       \qedhere \end{align*}
    \end{proof}

    Since we were able to provide an optimal polynomial for Theorem \ref{thm:ratio_chi} when $t=2$, which holds for general graphs, the next question would be if we can do the same for $t=3$, especially considering that we have the approach of \cite[Theorem 11]{kavi_optimal_2023} to work with as a base. Unfortunately, this is not as simple. To prove a generalization of \cite[Corollary 3.3]{ACF2019}, we mostly just have to swap some $\theta_0$'s for $\Delta$'s in the proof of \cite[Corollary 3.3]{ACF2019} and then deal with a few technicalities. On the other hand, in the proof of \cite[Theorem 11]{kavi_optimal_2023}, the authors take advantage of the fact that $W(p) = \max_{u\in V} \{(A^3)_{uu}\} +bk$ (for $p(x) = x^3 + bx^2 + cx$). This does not generalize nicely to non-regular graphs. Instead of $W(p)$ being linear in $b$, it is the maximum of $|V|$ linear functions in $b$, which makes several steps in the proof not work. Thus, finding the optimal polynomial for $t=3$ in Theorem \ref{thm:ratio_chi} is not a trivial task. 

   On the other hand, and as mentioned earlier, if one assumes graph regularity, it is straightforward to determine the optimal choices of $p$ for $t\in\{2,3\}$ in Theorem \ref{thm:ratio_chi_reg}. This is illustrated in the following two corollaries.
      
     \begin{corollary}\label{cor:ratio_chi_2_reg}
        Let $G$ be a $k$-regular graph with $n$ vertices with distinct eigenvalues $k=\theta_0 > \theta_1 > \cdots > \theta_d$ with $d \geq 2$. Let $\theta_i$ be the largest eigenvalue such that $\theta_i \leq -1$. Then 
        \begin{equation*}
            \chi_2(G) \geq \frac{n}{\left\lfloor n\frac{\theta_0 + \theta_i\theta_{i-1}}{(\theta_0 - \theta_i)(\theta_0 - \theta_{i-1})}\right\rfloor}.
        \end{equation*}
        Moreover, no better bound can be obtained from Theorem \ref{thm:ratio_chi_reg}.
    \end{corollary}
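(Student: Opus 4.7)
The plan is to apply Theorem \ref{thm:ratio_chi_reg} with the degree-two polynomial identified as optimal in Theorem \ref{thm:ratio_chi_2}, and then to transfer the optimality argument of that theorem to the regular setting via the monotonicity of the floor expression.

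First I would specialize the setup. Since $G$ is $k$-regular, $\Delta = k = \theta_0$, so the condition $\theta_i \leq -\Delta/\theta_0$ of Theorem \ref{thm:ratio_chi_2} reduces to the hypothesis $\theta_i \leq -1$ of the corollary. Taking $p(x) = x^2 - (\theta_i+\theta_{i-1})x$, I would compute $(p(A))_{uu} = (A^2)_{uu} - (\theta_i+\theta_{i-1})A_{uu} = k = \theta_0$ for every $u \in V$ (since $(A^2)_{uu}$ counts the $k$ closed walks of length two at $u$ and $A_{uu}=0$), so $W(p) = \theta_0$; as in the proof of Theorem \ref{thm:ratio_chi_2}, $p(\theta_0) = (\theta_0-\theta_i)(\theta_0-\theta_{i-1})$ and $\lambda(p) = p(\theta_i) = -\theta_i\theta_{i-1}$. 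Plugging these values into the formula of Theorem \ref{thm:ratio_chi_reg} yields the stated inequality.

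For the optimality claim, I would observe that the right-hand side of Theorem \ref{thm:ratio_chi_reg} is $n/\lfloor n/r(p)\rfloor$ with $r(p) := (p(\theta_0)-\lambda(p))/(W(p)-\lambda(p))$, hence a nondecreasing function of $r(p)$. Maximizing $r(p)$ over $p \in \mathbb{R}_2[x]$ is precisely the optimization carried out in Theorem \ref{thm:ratio_chi_2}, and I would carry over the same case analysis (with $\Delta = \theta_0$ substituted throughout) to identify $p(x) = x^2 - (\theta_i+\theta_{i-1})x$ as the maximizer, so that no polynomial of degree at most $2$ can yield a better bound via Theorem \ref{thm:ratio_chi_reg}. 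The only part that requires some care is checking that every sign and case split in the proof of Theorem \ref{thm:ratio_chi_2} remains valid when $\Delta = \theta_0$; this is immediate because that proof never distinguishes between the maximum degree and the spectral radius, and the existence of an eigenvalue satisfying $\theta_i \leq -1$ is guaranteed for any $k$-regular graph by $\sum_j m_j \theta_j = 0$.
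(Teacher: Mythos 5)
Your proposal is correct, and it reaches the same bound, but by a more self-contained route than the paper. The paper's proof is a two-line reduction: it applies the closed-form optimal bound on $\alpha_2(G)$ from \cite[Corollary 3.3]{ACF2019} and then the relation \eqref{lem:alpha_chi_relation}, and it proves optimality by contradiction (a better polynomial for Theorem \ref{thm:ratio_chi_reg} would yield a better polynomial bound on $\alpha_2(G)$, contradicting the optimality assertion of \cite[Corollary 3.3]{ACF2019}). You instead exhibit the polynomial $p(x)=x^2-(\theta_i+\theta_{i-1})x$ explicitly, compute $W(p)=\theta_0$, $p(\theta_0)=(\theta_0-\theta_i)(\theta_0-\theta_{i-1})$ and $\lambda(p)=-\theta_i\theta_{i-1}$, and substitute into Theorem \ref{thm:ratio_chi_reg}; for optimality you observe that $n/\lfloor n/r(p)\rfloor$ is nondecreasing in $r(p)$ and rerun the case analysis of Theorem \ref{thm:ratio_chi_2} with $\Delta=\theta_0$. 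This buys independence from the external reference (everything is derived from results already proved in the paper), at the cost of repeating the optimization argument; the paper's version is shorter but leans on \cite{ACF2019} as a black box. Both are valid.

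One small inaccuracy in your last sentence: the trace identity $\sum_j m_j\theta_j=0$ only guarantees a \emph{negative} eigenvalue, not one satisfying $\theta_i\le -1$. The correct justification (used implicitly in the paper via the reference \cite{ADF2024}) is that any graph with at least one edge has smallest eigenvalue at most $-1$, e.g.\ by interlacing with an induced $K_2$; for a $k$-regular graph this gives $\theta_d\le -1=-\Delta/\theta_0$, so the required $\theta_i$ exists. This does not affect the validity of the rest of your argument.
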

    \begin{proof}
       Apply  \cite[Corollary 3.3]{ACF2019} to $G$, followed by \eqref{lem:alpha_chi_relation} to obtain the desired bound. Now we must show optimality. Assume there is some $p$ which, when used to obtain a bound from Theorem \ref{thm:ratio_chi_reg}, gives a tighter bound on $\chi_2(G)$. Then, this $p$ will also result in a tighter bound on $\alpha_2(G)$ than the one from \cite[Corollary 3.3]{ACF2019}. This contradicts the optimality assertion from \cite[Corollary 3.3]{ACF2019}. Thus, no such $p$ can exist, and  no better bound can be obtained from Theorem \ref{thm:ratio_chi_reg}.
    \end{proof}

    \begin{corollary}\label{cor:ratio_chi_3_reg}
         Let $G$ be a $k$-regular graph with $n$ vertices with adjacency matrix $A$ and distinct eigenvalues $k = \theta_0 > \theta_1 > \cdots > \theta_d$, with $d \geq 3$. Let $\theta_s$ be the largest eigenvalue such that $\theta_s \leq - \frac{\theta^2_0 + \theta_0\theta_d-\Delta_3}{\theta_0 (\theta_d + 1)}$, where $\Delta_3 = \max_{u\in V} \{(A^3)_{uu}\}$. Then, the distance-$3$ chromatic number satisfies:
        \begin{equation*}
            \chi_3(G) \geq \frac{n}{\left\lfloor n\frac{\Delta_3 - \theta_0(\theta_s + \theta_{s-1} + \theta_d) - \theta_s\theta_{s-1} \theta_d}{(\theta_0 - \theta_s)(\theta_0 - \theta_{s-1})(\theta_0 - \theta_d)}\right\rfloor}.
        \end{equation*}
        Moreover, no better bound can be obtained from Theorem \ref{thm:ratio_chi_reg}.
    \end{corollary}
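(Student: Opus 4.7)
The plan is to mirror verbatim the argument used for Corollary~\ref{cor:ratio_chi_2_reg}, but with \cite[Corollary 3.3]{ACF2019} replaced by its degree-three analogue \cite[Theorem 11]{kavi_optimal_2023}. Concretely, I would first invoke \cite[Theorem 11]{kavi_optimal_2023} to obtain the closed-form optimal Ratio-type upper bound on $\alpha_3(G)$ for $k$-regular graphs, and then apply the inequality \eqref{lem:alpha_chi_relation} to convert it into the claimed lower bound on $\chi_3(G)$. Since the floor-quotient in Theorem~\ref{thm:ratio_chi_reg} is obtained exactly by chaining \cite[Theorem 3.2]{ACF2019} with \eqref{lem:alpha_chi_relation}, there is no friction in this conversion step.

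For the substitution, the natural candidate is the cubic $p(x) = (x - \theta_s)(x - \theta_{s-1})(x - \theta_d)$, i.e.\ $p(x) = x^3 + a x^2 + b x + c$ with $a = -(\theta_s+\theta_{s-1}+\theta_d)$, $b = \theta_s\theta_{s-1}+\theta_s\theta_d+\theta_{s-1}\theta_d$ and $c = -\theta_s\theta_{s-1}\theta_d$. Since $G$ is $k$-regular, $(A^2)_{uu} = k = \theta_0$ and $(A)_{uu} = 0$ for every $u \in V$, so
\[
(p(A))_{uu} = (A^3)_{uu} + a\,\theta_0 + c,
\]
hence $W(p) = \Delta_3 - \theta_0(\theta_s+\theta_{s-1}+\theta_d) - \theta_s\theta_{s-1}\theta_d$. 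Moreover $p(\theta_0) = (\theta_0-\theta_s)(\theta_0-\theta_{s-1})(\theta_0-\theta_d)$, and the threshold condition $\theta_s \le -\frac{\theta_0^2 + \theta_0\theta_d - \Delta_3}{\theta_0(\theta_d+1)}$ in the statement is precisely the condition that $\lambda(p)=0$ is realized at the three roots (so that $p \ge 0$ on the remaining spectrum and $p(\theta_j)$ is minimized exactly on $\{\theta_s,\theta_{s-1},\theta_d\}$). Plugging these three quantities into \eqref{eq:ratio_chi_regular} gives the displayed formula.

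For the optimality clause I would reason exactly as in Corollary~\ref{cor:ratio_chi_2_reg}: if some $q \in \mathbb{R}_3[x]$ yielded a strictly better lower bound on $\chi_3(G)$ via Theorem~\ref{thm:ratio_chi_reg}, then since that theorem is a direct consequence of \cite[Theorem 3.2]{ACF2019} plus \eqref{lem:alpha_chi_relation}, the same $q$ would yield a strictly better Ratio-type upper bound on $\alpha_3(G)$, contradicting the optimality assertion in \cite[Theorem 11]{kavi_optimal_2023}. Thus no such $q$ exists.

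The work here is almost entirely bookkeeping rather than a new proof idea. The only step that warrants care is verifying that the selection rule for $\theta_s$ stated in the corollary coincides with the one in \cite[Theorem 11]{kavi_optimal_2023} (playing the role that $\theta_i \le -\Delta/\theta_0$ played in Theorem~\ref{thm:ratio_chi_2}), and that the resulting $W(p)$, $\lambda(p)$ and $p(\theta_0)$ simplify to the numerator and denominator appearing inside the floor. No new technique beyond the $t=2$ case is needed.
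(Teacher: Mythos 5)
Your proposal is correct and follows essentially the same route as the paper, which likewise proves this corollary by applying \cite[Theorem 11]{kavi_optimal_2023} followed by \eqref{lem:alpha_chi_relation} and handles optimality exactly as in Corollary~\ref{cor:ratio_chi_2_reg}; your explicit computation of $W(p)$, $p(\theta_0)$ and $\lambda(p)$ is just supplementary bookkeeping the paper leaves to the cited reference. One small inaccuracy in your commentary: the threshold condition on $\theta_s$ is not what forces $\lambda(p)=0$ (that holds for any choice of consecutive pair together with $\theta_d$, since no eigenvalue lies strictly between consecutive distinct eigenvalues or below $\theta_d$); rather, it selects which such pair maximizes the resulting bound.
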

    \begin{proof}
        Apply \cite[Theorem 11]{kavi_optimal_2023} to $G$, followed by \eqref{lem:alpha_chi_relation} to obtain the desired bound. The proof of optimality is analogous to that of Corollary \ref{cor:ratio_chi_2_reg}.
    \end{proof}

\section{Bounding the distance chromatic number of the hypercube graph} \label{sec:chit_Qn}

    In this section we investigate the distance chromatic number of  hypercube graphs. The \emph{hypercube graph} of dimension $n$ (or simply $n$-cube), denoted $Q_n$, is the $n$-fold Cartesian product of $K_2$, the complete graph on two vertices. Hypercube graphs have a strong connection to coding theory, as each vertex of $Q_n$ can be seen as a vector in $\mathcal A_2^n=\{0,1\}^n$.
    Recall that one can define the \emph{Hamming distance} $\mathrm{d}_{\mathrm{H}}(u,v)$ between two vectors $u,v\in \mathcal A_2^n$ as the number of coordinate in which they differ.
    In particular, in the  hypercube graph $Q_n$, there is an edge connecting two vertices if the Hamming distance of their vector representation is $1$. It is known that the $t$-independence number of $Q_n$ equals 
        $$A_2(n,t+1):=\max\{|C| \,:\, C\subseteq \mathcal A_2, \mathrm{d}_{\mathrm{H}}(u,v)\ge t+1, \,\forall\, u,v \in C, u\neq v\}, $$ one of the most interesting and studied quantities in  coding theory. 
    In fact, this connection   holds in a more general setting, see e.g. Abiad, Khramova and Ravagnani \cite[Corollary~16]{abiad_eigenvalue_2023}.

The distance-$t$ chromatic number of the hypercube graph has been of interest in past research for its relation to $A_2(n,t+1)$. However, not even the distance-$2$ chromatic number is known for all $Q_n$. The problem of determining $\chi_2(Q_n)$ was first studied in 1997 by Wan \cite{wan_near-optimal_1997}, motivated by a connection to optical networks, who provided a bound on the distance-$2$ chromatic number of $Q_n$ and proposed the problem of finding bounds for general $t$. Wan \cite{wan_near-optimal_1997} conjectured that his upper bound was tight for all $n$, that is, that $\chi_2(Q_n) = 2^{\lceil\log_2(n+1)\rceil}$. However, this conjecture turned out to be false, as $13 \leq \chi_2(Q_8) \leq 14$ \cite[Section 9.1]{ziegler_coloring_2001}.
Kim, Du and Pardalos \cite{kim_coloring_2000} showed a similar result for $\chi_3(Q_n)$, and they also provided a more crude bound for the general distance-$t$ chromatic number. In 2002, Ngo, Du and Graham \cite{ngo_new_2002} improved upon and extended these previous bounds for $\chi_t(Q_n)$ by using a version of the Johnson bound.
    \begin{theorem}[{\cite[Theorem 1]{ngo_new_2002}}]\label{thm:Qn_chit_ngobound}
        Let $Q_n$ be the hypercube graph of dimension $n$. Let $s:= \lfloor \frac{t}{2}\rfloor$ and denote $(\binom{n}{t}) := \sum_{i=0}^t \binom{n}{i}$. Then the distance-$t$ chromatic number of $Q_n$ satisfies
        {\footnotesize{
        \begin{align*}
            \left(\binom{n}{s}\right) + \frac{1}{\left\lfloor \frac{n}{s+1}\right\rfloor} \binom{n}{s} \left(\frac{n-s}{s+1} - \left\lfloor \frac{n-s}{s+1}\right\rfloor\right) & \leq \chi_t(Q_n) \leq 2^{\left\lfloor \log_2 \left(\binom{n-1}{t-1}\right)\right\rfloor+1} \  &\text{ if } t \text{ even,}\\
           2\left(\left(\binom{n-1}{s}\right) + \frac{1}{\left\lfloor \frac{n-1}{s+1}\right\rfloor} \binom{n-1}{s} \left(\frac{n-1-s}{s+1} - \left\lfloor \frac{n-1-s}{s+1}\right\rfloor\right)\right) &\leq \chi_t(Q_n) \leq 2^{\left\lfloor \log_2 \left(\binom{n-2}{t-2}\right)\right\rfloor+2} \ & \text{ if } t \text{ odd.}
        \end{align*}
        }}
    \end{theorem}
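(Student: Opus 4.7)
The plan is to translate the colouring question on $Q_n^t$ into the coding-theoretic quantity $A_2(n,t+1)$ and handle the two halves separately. Since $Q_n$ is vertex-transitive and $\alpha_t(Q_n) = A_2(n,t+1)$, inequality \eqref{lem:alpha_chi_relation} yields $\chi_t(Q_n) \geq 2^n / A_2(n,t+1)$. Symmetrically, any binary linear code $C \subseteq \mathbb{F}_2^n$ of minimum Hamming distance at least $t+1$ produces a proper $t$-distance colouring of $Q_n$ with $2^n/|C|$ colour classes, namely the cosets of $C$: two vectors in a common coset differ by a nonzero codeword of $C$ and hence have Hamming distance $\geq t+1$.

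For the lower bound I would invoke the classical Johnson bound, which reads
\begin{equation*}
A_2(n, 2s+1) \leq \frac{2^n}{\left(\binom{n}{s}\right) + \dfrac{\binom{n}{s}}{\left\lfloor n/(s+1)\right\rfloor}\left(\frac{n-s}{s+1} - \left\lfloor\frac{n-s}{s+1}\right\rfloor\right)},
\end{equation*}
where $\left(\binom{n}{s}\right) = \sum_{i=0}^s \binom{n}{i}$ is the shorthand from Theorem~\ref{thm:Qn_chit_ngobound}. If $t$ is even, I take $s = t/2$ and substitute directly into $\chi_t(Q_n) \geq 2^n/A_2(n,t+1)$. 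If $t$ is odd, I first apply the parity-extension identity $A_2(n,2e) = A_2(n-1, 2e-1)$ (appending an overall parity bit bijects codes of minimum distance $2e-1$ in length $n-1$ with codes of minimum distance $2e$ in length $n$, preserving cardinality), giving $A_2(n, t+1) = A_2(n-1,t)$, and then apply Johnson with $s = (t-1)/2$ to $A_2(n-1,t)$; the extra factor $2$ in the odd bound comes from $2^n/2^{n-1}$.

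For the upper bound I would construct an explicit binary linear code $C$ with the desired parameters. For $t$ even one needs $C \subseteq \mathbb{F}_2^n$ of minimum distance $\geq t+1$ with $|C| \geq 2^{n - \lfloor \log_2\binom{n-1}{t-1}\rfloor - 1}$; such a code can be produced via a BCH-type construction of suitable designed distance, or via a Gilbert--Varshamov-style greedy dimension argument, in either case rounding the dimension down so that the index $[\mathbb{F}_2^n : C]$ is a power of $2$ (this rounding is exactly what produces the $\lfloor\log_2(\cdot)\rfloor + 1$ in the exponent). The $2^n/|C|$ cosets then form the colour classes. For $t$ odd, reverse the parity-extension trick: build a code with parameters $(n-1, t-1)$ by the same method and append an overall parity check, which exactly doubles the number of colour classes and yields the $+2$ in the exponent.

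The main technical obstacle is the upper-bound construction: the lower estimate reduces cleanly to the standard Johnson double-counting through constant-weight codes $A_2(n, 2s+2, s+1)$, whereas matching the exponent $\lfloor \log_2\binom{n-1}{t-1}\rfloor + 1$ exactly requires exhibiting an explicit linear code of sufficiently large dimension and handling the power-of-two rounding with care. The parity-extension identity $A_2(n, 2e) = A_2(n-1, 2e-1)$ is the common thread unifying the even and odd cases on both sides of the inequality.
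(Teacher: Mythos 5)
This statement is a quoted result (\cite[Theorem 1]{ngo_new_2002}); the paper itself gives no proof of it, only the remark that the original argument uses ``a version of the Johnson bound'' together with bounds on $\alpha_t(Q_n)=A_2(n,t+1)$. Your reconstruction follows exactly that route and is sound. For the lower bound: $\chi_t(Q_n)\ge 2^n/A_2(n,t+1)$ via inequality \eqref{lem:alpha_chi_relation}, the Johnson bound applied with $s=t/2$ in the even case, and the parity-extension identity $A_2(n,2e)=A_2(n-1,2e-1)$ followed by Johnson in length $n-1$ in the odd case; the factor $2=2^n/2^{n-1}$ and the shift $n\mapsto n-1$ come out exactly as in the stated bounds. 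For the upper bound, the one point where you should commit to a single argument: the Gilbert--Varshamov bound for \emph{linear} codes is what delivers the exponent exactly, since a linear $[n,k]$ code of minimum distance at least $t+1$ exists whenever $\sum_{i=0}^{t-1}\binom{n-1}{i}<2^{n-k}$, and taking $n-k=\lfloor\log_2\sum_{i=0}^{t-1}\binom{n-1}{i}\rfloor+1$ satisfies this strictly, giving $2^{n-k}$ cosets as colour classes; a BCH-type construction would not in general match this redundancy, so that alternative should be dropped. The odd upper bound then follows by building the length-$(n-1)$, distance-$t$ code this way and appending a parity bit, which preserves the dimension and doubles the index, yielding the $+2$ in the exponent as you say.
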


    Of particular interest to us are the lower bounds from Theorem \ref{thm:Qn_chit_ngobound} for $t\in\{2,3\}$, since in Section \ref{sec:chit_Qn} we will provide alternative spectral proofs for them. The existing proofs, similarly as ours, rely on using existing bounds on $\alpha_t(G)/A_q(n,d)$.
    
    \begin{corollary}[{\cite{ngo_new_2002}}]\label{cor:Qn_chi2_ngobound}
        Let $Q_n$ be the hypercube graph of dimension $n$, with $n\geq 2$. Then, the distance-$2$ chromatic number satisfies
        \begin{equation*}
            \chi_2(Q_n) \geq \begin{cases}
                 n+2 &\text{if } n \text{ even,} \\
                 n+1 &\text{if } n \text{ odd.}
            \end{cases} 
        \end{equation*}
    \end{corollary}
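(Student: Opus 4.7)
The plan is to apply the optimal Ratio-type bound for $\chi_2$ on regular graphs (Corollary \ref{cor:ratio_chi_2_reg}) directly to the hypercube graph $Q_n$, using its well-known spectrum. Recall that $Q_n$ is $n$-regular on $2^n$ vertices, with eigenvalues $\theta_j = n - 2j$ for $j = 0, 1, \ldots, n$ and respective multiplicities $\binom{n}{j}$. So $\theta_0 = n$ and the set of distinct eigenvalues is a symmetric arithmetic progression from $n$ to $-n$ with common difference $2$.

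The first step is to identify the largest eigenvalue $\theta_i$ with $\theta_i \leq -1$, as required by Corollary \ref{cor:ratio_chi_2_reg}, separating two cases by the parity of $n$. If $n$ is odd, then the eigenvalues include $\pm 1$, so $\theta_i = -1$ and $\theta_{i-1} = 1$. If $n$ is even, then $0$ is an eigenvalue and the eigenvalues immediately flanking it from below and above are $-2$ and $0$, so $\theta_i = -2$ and $\theta_{i-1} = 0$.

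Next, I plug these values, together with $\theta_0 = n$, into the bound
\[
\chi_2(Q_n) \;\geq\; \frac{2^n}{\left\lfloor 2^n \dfrac{\theta_0 + \theta_i \theta_{i-1}}{(\theta_0 - \theta_i)(\theta_0 - \theta_{i-1})} \right\rfloor}.
\]
For $n$ odd, the fraction inside simplifies as $\frac{n - 1}{(n+1)(n-1)} = \frac{1}{n+1}$, giving the bound $\chi_2(Q_n) \geq 2^n / \lfloor 2^n/(n+1)\rfloor$. For $n$ even, the fraction becomes $\frac{n}{n(n+2)} = \frac{1}{n+2}$, giving $\chi_2(Q_n) \geq 2^n / \lfloor 2^n/(n+2) \rfloor$.

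The final step is purely arithmetical: since $\lfloor x \rfloor \leq x$ for all real $x$, we have $2^n / \lfloor 2^n/(n+1) \rfloor \geq n+1$ in the odd case and $2^n / \lfloor 2^n/(n+2) \rfloor \geq n+2$ in the even case, which is the desired conclusion. There is essentially no obstacle here; the only subtlety is the case analysis on the parity of $n$ to locate $\theta_i$, and confirming that for $n \geq 2$ one indeed has $d \geq 2$ distinct eigenvalues so that Corollary \ref{cor:ratio_chi_2_reg} applies.
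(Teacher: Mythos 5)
Your proposal is correct and follows essentially the same route as the paper: both apply Corollary \ref{cor:ratio_chi_2_reg} to $Q_n$ using the spectrum $\{n, n-2, \ldots, -n\}$ and the same choices $\theta_i=-2,\ \theta_{i-1}=0$ ($n$ even) and $\theta_i=-1,\ \theta_{i-1}=1$ ($n$ odd). The only cosmetic difference is that you retain the floor in the bound and then relax it via $\lfloor x\rfloor\le x$, whereas the paper evaluates the un-floored ratio $\frac{(\theta_0-\theta_i)(\theta_0-\theta_{i-1})}{\theta_0+\theta_i\theta_{i-1}}$ directly, which gives exactly $n+2$ or $n+1$.
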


    In 2008, Jamison, Matthews and Villalpando \cite{jamison_acyclic_2006} independently proved the same lower bound as in Corollary \ref{cor:Qn_chi2_ngobound}.

    \begin{corollary}[{\cite{ngo_new_2002}}]\label{cor:Qn_chi3_ngobound}
        Let $Q_n$ be the hypercube graph of dimension $n$, with $n \geq 3$. Then the distance-$3$ chromatic number satisfies
        \begin{equation*}
            \chi_3(Q_n) \geq \begin{cases}
                2n &\text{ if } n \text{ even,} \\
                2(n + 1) &\text{ if } n \text{ odd.}
            \end{cases}
        \end{equation*}
    \end{corollary}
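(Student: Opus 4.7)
The plan is to apply Corollary~\ref{cor:ratio_chi_3_reg} directly to $Q_n$. We start by recording the standard spectral data: $Q_n$ is $n$-regular on $2^n$ vertices, with distinct adjacency eigenvalues $\theta_i=n-2i$ for $i=0,\ldots,n$, so that $\theta_0=n$ and $\theta_d=-n$. The key observation that makes the bound clean is that $Q_n$ is bipartite, hence contains no closed walk of odd length; in particular $(A^3)_{uu}=0$ for every vertex $u$, giving $\Delta_3=0$.

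With $\Delta_3=0$, $\theta_0=n$ and $\theta_d=-n$, the threshold $-\frac{\theta_0^2+\theta_0\theta_d-\Delta_3}{\theta_0(\theta_d+1)}$ defining $\theta_s$ in Corollary~\ref{cor:ratio_chi_3_reg} collapses to $0$. Thus $\theta_s$ is simply the largest eigenvalue of $Q_n$ that is $\leq 0$, and we split according to the parity of $n$. If $n$ is even then $0$ lies in the spectrum (taking $i=n/2$), so $\theta_s=0$ and $\theta_{s-1}=2$; if $n$ is odd then $0$ is not in the spectrum, so $\theta_s=-1$ and $\theta_{s-1}=1$. In both cases the hypothesis $n\geq 3$ is precisely what guarantees $\theta_{s-1}<\theta_0$, so the indices are well defined.

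Plugging these values into Corollary~\ref{cor:ratio_chi_3_reg} is now a routine symbolic simplification. The fraction
\begin{equation*}
\frac{\Delta_3-\theta_0(\theta_s+\theta_{s-1}+\theta_d)-\theta_s\theta_{s-1}\theta_d}{(\theta_0-\theta_s)(\theta_0-\theta_{s-1})(\theta_0-\theta_d)}
\end{equation*}
reduces to $\tfrac{1}{2n}$ when $n$ is even and to $\tfrac{1}{2(n+1)}$ when $n$ is odd. Since $\lfloor x\rfloor\leq x$, the bound of Corollary~\ref{cor:ratio_chi_3_reg} yields
\begin{equation*}
\chi_3(Q_n)\geq\frac{2^n}{\lfloor 2^{n-1}/n\rfloor}\geq 2n\quad(n\text{ even}), \qquad \chi_3(Q_n)\geq\frac{2^n}{\lfloor 2^{n-1}/(n+1)\rfloor}\geq 2(n+1)\quad(n\text{ odd}).
\end{equation*}

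There is no real obstacle: bipartiteness forces $\Delta_3=0$, which in turn forces the threshold to $0$, and the fact that $\pm 1$ and $0$ appear near the middle of the hypercube's spectrum (according to parity) makes the resulting rational function simplify to the desired form. The only care needed is the elementary check that the chosen $\theta_s,\theta_{s-1}$ are genuine spectral values, which is exactly what $n\geq 3$ provides.
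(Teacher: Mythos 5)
Your proposal is correct and follows essentially the same route as the paper's own alternative proof: apply Corollary~\ref{cor:ratio_chi_3_reg} to $Q_n$, observe $\Delta_3=0$ from the absence of odd closed walks, compute the threshold to be $0$, and split by parity to get $\theta_s,\theta_{s-1}\in\{0,2\}$ or $\{-1,1\}$. Your explicit handling of the floor via $\lfloor x\rfloor\leq x$ is a small tidiness improvement over the paper, which writes the ratio directly, but the argument is otherwise identical.
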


    For certain specific cases, exact values of $\chi_t(Q_n)$ are known, as the following result shows. 
    
    \begin{theorem}[{\cite[Theorem 2.15]{francis_b-coloring_2017}}]\label{thm:equalitychihypercube}
        Let $Q_n$ be the hypercube graph of dimension $n$, with $n \geq 2$. If $\frac{2(n-1)}{3}\leq t \leq n-1$, then $\chi_t(Q_n) = 2^{n-1}$.
    \end{theorem}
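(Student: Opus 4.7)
The plan is to prove the two inequalities $\chi_t(Q_n) \leq 2^{n-1}$ and $\chi_t(Q_n) \geq 2^{n-1}$ separately, using only the identification of $Q_n$ with the Hamming graph on $\{0,1\}^n$ and the relationship between $\alpha_t(Q_n)$ and binary codes.

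For the upper bound, I would exhibit an explicit $t$-distance coloring with $2^{n-1}$ colors. Any two antipodal vertices (bitwise complements) of $Q_n$ sit at Hamming distance exactly $n$, and the hypothesis gives $t \leq n-1 < n$, so the two members of an antipodal pair may legitimately share a color. Partitioning $\{0,1\}^n$ into its $2^{n-1}$ antipodal pairs and assigning a distinct color to each pair therefore yields a valid $t$-distance coloring, proving $\chi_t(Q_n) \leq 2^{n-1}$.

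For the lower bound I would start from $\chi_t(Q_n) \geq 2^n / \alpha_t(Q_n) = 2^n / A_2(n,t+1)$, which comes from~\eqref{lem:alpha_chi_relation} together with the identification $\alpha_t(Q_n) = A_2(n,t+1)$ recalled at the beginning of Section~\ref{sec:chit_Qn}. It thus suffices to show $A_2(n,t+1) \leq 2$ under the hypothesis $t \geq \tfrac{2(n-1)}{3}$. I would deduce this from the elementary three-codeword counting argument: given any three binary vectors $c_1,c_2,c_3 \in \{0,1\}^n$, each coordinate contributes either $0$ (all three bits equal) or $2$ (one bit differs from the other two) to the sum $d_{\mathrm H}(c_1,c_2)+d_{\mathrm H}(c_1,c_3)+d_{\mathrm H}(c_2,c_3)$, so this sum is at most $2n$; pairwise minimum distance $d$ therefore forces $3d \leq 2n$. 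From $t \geq \tfrac{2(n-1)}{3}$ one obtains $t+1 \geq \tfrac{2n+1}{3} > \tfrac{2n}{3}$, so no three binary vectors at pairwise distance $\geq t+1$ can exist, whence $A_2(n,t+1) \leq 2$ and $\chi_t(Q_n) \geq 2^{n-1}$.

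Combining the two bounds yields the claimed equality. I do not anticipate any genuine obstacle: the interval $\tfrac{2(n-1)}{3} \leq t \leq n-1$ is calibrated precisely so that (a) antipodal vertices may safely share a color, and (b) the three-codeword constraint collapses $A_2(n,t+1)$ to at most two. One could alternatively attempt the lower bound spectrally by plugging a low-degree polynomial into Theorem~\ref{thm:ratio_chi_reg} and using the eigenvalues $n-2k$ of $Q_n$ with multiplicities $\binom{n}{k}$, but the combinatorial route through~\eqref{lem:alpha_chi_relation} is shorter and sidesteps the problem of optimizing the polynomial.
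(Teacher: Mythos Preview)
Your argument is correct. The paper itself does not supply a proof of this statement; it is quoted verbatim from \cite{francis_b-coloring_2017} as background, so there is no in-paper proof to compare against. For the record, both halves of your proposal go through: the antipodal-pair coloring uses exactly the hypothesis $t\le n-1$, and the Plotkin-type three-vector inequality $d_{\mathrm H}(c_1,c_2)+d_{\mathrm H}(c_1,c_3)+d_{\mathrm H}(c_2,c_3)\le 2n$ combined with $t+1>(2n)/3$ forces $A_2(n,t+1)\le 2$, which via \eqref{lem:alpha_chi_relation} gives the matching lower bound.
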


    In particular, while finding the exact value of $\chi_2(Q_n)$ for small $n$ has received much attention (see e.g. \cite{kokkala_chromatic_2017, lauri2016square, jensen_coloring_1995, ziegler_coloring_2001}), it continues to be an open problem even for $n$ as small as $9$.

    Next, we use the bounds from Corollaries \ref{cor:ratio_chi_2_reg} and \ref{cor:ratio_chi_3_reg} on hypercube graphs in order to obtain alternative spectral proofs for Corollaries \ref{cor:Qn_chi2_ngobound} and \ref{cor:Qn_chi3_ngobound}. Additionally, we provide new bounds on $\chi_4(Q_n)$ and $\chi_5(Q_n)$ in Corollary \ref{cor:Qn_chit_ratiobound}. Before we present the new proof, we need the following preliminary result.

        \begin{theorem}[{\cite[Theorem 9.2.1]{brouwer_distance-regular_1989}}]\label{thm:Qn_spectrum}
        The hypercube graph $Q_n$ has adjacency eigenvalues $\theta_l = (n-2l)$ with multiplicities $m_l = \binom{n}{l}$ for $l \in [0,n]$.
    \end{theorem}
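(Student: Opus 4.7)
The plan is to exploit the fact that $Q_n$ is by definition the $n$-fold Cartesian product of $K_2$, together with the classical rule that spectra of Cartesian products are obtained by adding eigenvalues of the factors. First I would recall that $K_2$ has adjacency matrix $\begin{pmatrix} 0 & 1 \\ 1 & 0 \end{pmatrix}$, with eigenvalues $+1$ and $-1$ and eigenvectors $(1,1)^\top$ and $(1,-1)^\top$. Then, using that for graphs $G_1,G_2$ with eigenpairs $(\mu_i,u_i)$ and $(\nu_j,v_j)$ the Cartesian product $G_1 \square G_2$ has eigenpairs $(\mu_i + \nu_j,\, u_i \otimes v_j)$, I would induct on $n$ to conclude that the spectrum of $Q_n$ consists precisely of the $2^n$ sums
\[
\sum_{i=1}^n \epsilon_i, \qquad (\epsilon_1,\ldots,\epsilon_n) \in \{+1,-1\}^n.
\]
Each such sum equals $n-2l$, where $l$ is the number of indices $i$ with $\epsilon_i = -1$, and the number of sign vectors giving the value $n-2l$ is exactly $\binom{n}{l}$, yielding the claimed multiplicities.

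As an alternative route (which I would sketch as a remark), one can use that $Q_n$ is the Cayley graph on $\mathbb{Z}_2^n$ with connection set $S = \{e_1,\ldots,e_n\}$. Since $\mathbb{Z}_2^n$ is abelian, its characters $\chi_T(x) = (-1)^{T\cdot x}$, indexed by $T \subseteq [n]$, diagonalize every Cayley graph simultaneously. The eigenvalue attached to $\chi_T$ is
\[
\sum_{i=1}^n \chi_T(e_i) = (n-|T|) - |T| = n - 2|T|,
\]
and grouping characters by $|T|=l$ again gives multiplicity $\binom{n}{l}$.

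Completeness is automatic in either approach: the $2^n$ tensor-product eigenvectors (respectively, characters) are pairwise orthogonal, hence linearly independent, so they account for all eigenvalues of the $2^n \times 2^n$ adjacency matrix. There is no real obstacle here; the main care needed is simply to set up the Cartesian-product eigenvalue rule cleanly and to verify that the counting of sign vectors with $l$ minus signs equals $\binom{n}{l}$, which is immediate.
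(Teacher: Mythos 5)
Your proof is correct. The paper does not actually prove this statement; it simply cites it as Theorem 9.2.1 of Brouwer--Cohen--Neumaier, so there is no in-paper argument to compare against line by line. That said, your main route --- eigenvalues of $K_2$ are $\pm 1$, eigenvalues of a Cartesian product are sums of eigenvalues of the factors, hence the spectrum of $Q_n$ consists of the values $n-2l$ with multiplicity $\binom{n}{l}$, with completeness guaranteed by the $2^n$ orthogonal tensor-product eigenvectors --- is precisely the method the paper itself uses for the generalization to Lee graphs: it derives the spectrum of $G(n,q)=C_q\,\square\cdots\square\,C_q$ in Corollary \ref{cor:Gnq_spectrum} by combining the spectrum of the factor (Lemma \ref{lem:Cq_spectrum}) with the Cartesian-product rule (Theorem \ref{thm:prod_spectrum}). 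Your Cayley-graph remark via the characters of $\mathbb{Z}_2^n$ is also valid and is really the same computation in disguise, since the tensor-product eigenvectors are exactly those characters. No gaps.
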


    \begin{proof}[Alternative proof of Corollary \ref{cor:Qn_chi2_ngobound}]
        We apply Corollary \ref{cor:ratio_chi_2_reg} to $Q_n$. For this we use Theorem \ref{thm:Qn_spectrum}, which tells us that the adjacency eigenvalues of $Q_n$ are $\{n,n-2,\hdots,-n\}$. In particular, $\theta_0 = n$, $\theta_d = -n$. For $\theta_i$, we must split into two cases, $n$ even or $n$ odd. For $n$ even we have $\theta_i = -2$, $\theta_{i-1} = 0$ and for $n$ odd we have $\theta_i = -1$, $\theta_{i-1} = 1$. This gives
        \begin{equation*}
            \chi_2(Q_n) \geq \frac{(\theta_0 - \theta_i)(\theta_0 - \theta_{i-1})}{\theta_0 + \theta_i\theta_{i-1}} = \begin{cases}
                \frac{(n+2)n}{n}  = n+2 &\text{if } n \text{ even,}\\
                \frac{(n+1)(n-1)}{n-1}  = n+1 &\text{if } n \text{ odd,}\\
            \end{cases}
        \end{equation*}
        as desired.
    \end{proof}
   
    \begin{proof}[Alternative proof of Corollary \ref{cor:Qn_chi3_ngobound}]
         We apply Corollary \ref{cor:ratio_chi_3_reg} to $Q_n$. For this, we use Theorem \ref{thm:Qn_spectrum}, which gives us that the adjacency eigenvalues of $Q_n$ are $\{n,n-2,\hdots,-n\}$. In particular, $\theta_0 = n$, $\theta_d = -n$. Note that none of the hypercube graphs have a closed walk of length $3$, this means all diagonal entries of $A^3$ are 0, and thus $\Delta_3 = 0$. Recall $\theta_s$ is the largest eigenvalue such that $\theta_s \leq -\frac{\theta_0^2 + \theta_0\theta_d-\Delta_3}{\theta_0(\theta_d+1)}$, in this case we have $-\frac{\theta_0^2 + \theta_0\theta_d-\Delta_3}{\theta_0(\theta_d+1)} = -\frac{n^2-n^2-0}{n(1-n)} = 0$. We must distinguish the cases $n$ even and $n$ odd. If $n$ is even, then $\theta_s = 0$ and $\theta_{s-1} = 2$, if $n$ is odd then $\theta_s = -1$ and $\theta_{s-1} = 1$. We can now plug all these values into the bound from Corollary \ref{cor:ratio_chi_3_reg} to obtain
         {\small{
        \begin{align*}
            \chi_3(Q_n) &\geq \frac{ (n - 0)(n - 2)(n + n)}{0-n(0 + 2 -n) + 0\cdot 2n} = \frac{2n^2(n-2)}{n(n-2)} = 2n &\text{ if } n \text{ even,} \\
            \chi_3(Q_n) &\geq \frac{(n +1 )(n - 1)(n + n)}{0-n(-1 + 1 -n) -1\cdot 1 \cdot n} = \frac{2n(n+1)(n-1)}{n(n-1)} = 2(n+1)  &\text{ if } n \text{ odd,}
        \end{align*}
        }}
        as desired.
    \end{proof}

    While we do not have a closed expression for the best possible Ratio-type bound for $t \geq 4$ like we do for $t=2,3$, we can instead use LP \eqref{eq:ratio_chi_LP_walk} to obtain bounds for specific graphs. In this particular case of hypercube graphs $Q_n$, the polynomials obtained from LP \eqref{eq:ratio_chi_LP_walk} for small $n$ follow a clear pattern. We are able to extend this pattern to the general case, at least for $t=4,5$, and obtain a closed expression bound in Corollary \ref{cor:Qn_chit_ratiobound}. As we will see in Tables \ref{tab:Qn_t4} and \ref{tab:Qn_t5}, computational experiments suggest that the new bound from Corollary \ref{cor:Qn_chit_ratiobound} outperforms the previously known bound from Theorem \ref{thm:Qn_chit_ngobound} in certain cases. Before stating this bound, we first need to characterize the diagonal entries of the powers of the adjacency matrix of the hypercube graphs.

    \begin{lemma}\label{lem:Qn_walks}
        Let $Q_n$ be the hypercube graph of dimension $n$ with adjacency matrix $A$, let $i \in \mathbb{N}$. Then
        \begin{equation*}
            (A^i)_{uu} = \frac{1}{2^n}\sum_{i=0}^n \binom{n}{i} (n-2i)^i.
        \end{equation*}
    \end{lemma}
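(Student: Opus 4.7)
The statement is a straightforward consequence of walk-regularity combined with the known spectrum of $Q_n$ given in Theorem \ref{thm:Qn_spectrum}; the only subtlety is noting that the summation index in the formula must be distinct from the exponent $i$ (so the intended identity reads $(A^i)_{uu} = \frac{1}{2^n}\sum_{k=0}^n \binom{n}{k}(n-2k)^i$).

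The plan is to exploit the fact that $Q_n$ is vertex-transitive. Indeed, $Q_n$ is a Cayley graph on $(\mathbb{Z}/2\mathbb{Z})^n$ with connection set the standard basis, so its automorphism group acts transitively on vertices. First I would observe that vertex-transitivity implies walk-regularity: for any automorphism $\sigma$ of $Q_n$, the permutation matrix $P_\sigma$ commutes with $A$, hence with every power $A^i$, so $(A^i)_{uu} = (A^i)_{\sigma(u)\sigma(u)}$ for all $u$. Since the action is transitive, the diagonal of $A^i$ is constant.

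Once this constancy is established, the value of any diagonal entry equals the averaged value, that is
\begin{equation*}
(A^i)_{uu} = \frac{1}{2^n}\,\mathrm{tr}(A^i) = \frac{1}{2^n}\sum_{k=0}^n m_k \, \theta_k^{\,i}.
\end{equation*}
Substituting the spectrum from Theorem \ref{thm:Qn_spectrum}, namely $\theta_k = n-2k$ with multiplicity $m_k = \binom{n}{k}$, yields the desired identity.

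There is no real obstacle here; the only thing to be careful about is the dual use of the symbol $i$ in the statement, which should be read with the summation index renamed (e.g.\ to $k$). The proof is essentially a one-line application of the trace formula for walk-regular graphs, and this lemma is needed only to feed the correct values of $W(p) = (p(A))_{uu}$ into the LP \eqref{eq:ratio_chi_LP_walk} when deriving the subsequent bounds on $\chi_4(Q_n)$ and $\chi_5(Q_n)$.
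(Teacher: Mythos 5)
Your proof is correct and follows essentially the same route as the paper: the authors likewise compute $(A^i)_{uu} = \frac{1}{2^n}\tr A^i$ by walk-regularity and substitute the spectrum from Theorem \ref{thm:Qn_spectrum} (and the statement indeed contains the index clash you flag, with the summation index needing to be renamed). The only difference is that you justify walk-regularity via vertex-transitivity of the Cayley graph, a detail the paper simply asserts.
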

    \begin{proof}
        We know $\tr A^i = \sum_{i=1}^n \lambda_i^i = \sum_{i=0}^n \binom{n}{i} (n-2j)^i$. By the walk-regularity of $Q_n$, we have $(A^i)_{uu} = \frac{1}{2^n} \tr A^i$, which gives us the desired equality.
    \end{proof}

    \begin{corollary}\label{cor:Qn_chit_ratiobound}
        Let $t\in \{4,5\}$ and let $Q_n$ be the hypercube graph of dimension $n$, with $n \geq t$. Let 
        \begin{equation*}
            m = \left\lfloor\frac{\sqrt{n+3-t}+n+3-t}{2} - \left\lceil\frac{n+4-t}{2}\right\rceil\right\rfloor.
        \end{equation*}
        Define
        \begin{equation*}
            R_{t,m} = \begin{cases}
                \{-(2m+4), -(2m+2), 2m, (2m+2)\} &\text{ if } t=4 \text{ and } n \text{ even,} \\
                \{-(2m+5), -(2m+3), (2m+1), (2m+3)\} &\text{ if } t=4 \text{ and } n \text{ odd,} \\
                \{-n, -(2m+4), -(2m+2), (2m+2), (2m+4)\} &\text{ if }t=5 \text{ and } n \text{ even,} \\
                \{-n, -(2m+3), -(2m+1), (2m+1), (2m+3)\} &\text{ if } t=5 \text{ and } n \text{ odd,}
                \end{cases}
        \end{equation*}
        and let $a_i = \frac{1}{2^n}\sum_{i=0}^n \binom{n}{i} (n-2i)^i$. Consider the polynomial
        \begin{equation*}
            p_{t,m}(x) = \prod_{r \in R_{t,m}} (x-r), 
        \end{equation*} 
        which has coefficients:
        \begin{equation*}
            b_{t,m,i} = (-1)^{t+i} \sum_{\substack{S \subset R_{t,m} \\ \abs{S} = (t-i)}} \prod_{r \in S} r.
        \end{equation*}
        Then, the distance-$t$ chromatic number satisfies
        \begin{equation*}
            \chi_t(Q_n) \geq \frac{p_{t,m}(n)}{\sum_{i=0}^t a_ib_{t,m,i}}.
        \end{equation*}
    \end{corollary}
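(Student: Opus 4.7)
The plan is to apply the General Ratio-type bound of Theorem~\ref{thm:ratio_chi} to $Q_n$ with the polynomial $p_{t,m}(x)$, verify that it satisfies $\lambda(p_{t,m})=0$, and read off the bound. By Theorem~\ref{thm:Qn_spectrum} the spectrum of $Q_n$ consists of the integers $\theta_l=n-2l$ for $l\in[0,n]$, so $\lambda_1=n$ and the non-leading eigenvalues form an arithmetic progression in $[-n,n-2]$ of common difference $2$ and parity matching that of $n$. A direct check using the stated expression for $m$ shows that in each of the four cases (two values of $t$, two parities of $n$) every element of $R_{t,m}$ is an integer of the correct parity lying in $[-n,n-2]$, and is therefore a non-leading eigenvalue of $Q_n$. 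This is the first step.

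The key step is to show $\lambda(p_{t,m}):=\min_{j\ge 1} p_{t,m}(\theta_j)=0$. Since $p_{t,m}$ is monic of degree $t$ with $t$ simple real roots at the elements of $R_{t,m}$, it is negative precisely on the open intervals between consecutive roots on which an odd number of linear factors have become negative, together with $(-\infty,\min R_{t,m})$ in the case $t=5$. Inspecting $R_{t,m}$ in each case, those negative intervals are either open intervals of length exactly $2$ whose endpoints are consecutive eigenvalues of $Q_n$ of the same parity, or (for $t=5$) the half-line $(-\infty,-n)$. Since eigenvalues of $Q_n$ of fixed parity are spaced exactly $2$ apart and are bounded below by $-n$, no eigenvalue lies strictly inside any such interval. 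Therefore $p_{t,m}(\theta_j)\ge 0$ for every $j\ge 1$, with equality attained at the roots in $R_{t,m}$, yielding $\lambda(p_{t,m})=0$. Moreover, the largest element of $R_{t,m}$ is at most $2m+4<n$ under the definition of $m$, so $p_{t,m}(\lambda_1)=p_{t,m}(n)>0=\lambda(p_{t,m})$ and the hypothesis of Theorem~\ref{thm:ratio_chi} is satisfied.

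The final step is to compute $p_{t,m}(\lambda_1)$ and $W(p_{t,m})$. Expanding $p_{t,m}(x)$ via elementary symmetric functions of its roots yields the stated formula for the coefficients $b_{t,m,i}$, and evaluating at $\lambda_1=n$ gives $p_{t,m}(n)=\prod_{r\in R_{t,m}}(n-r)$. Since $Q_n$ is distance-regular and hence walk-regular, Lemma~\ref{lem:Qn_walks} implies that $(p_{t,m}(A))_{uu}=\sum_{i=0}^t b_{t,m,i}(A^i)_{uu}=\sum_{i=0}^t a_i b_{t,m,i}$ is the same for every vertex $u$, so $W(p_{t,m})=\sum_{i=0}^t a_i b_{t,m,i}$. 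Substituting these values together with $\lambda(p_{t,m})=0$ into Theorem~\ref{thm:ratio_chi} yields the claimed bound.

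I expect the main obstacle to be the case analysis hidden in the second paragraph: one must verify for each of the four combinations of $t\in\{4,5\}$ and parity of $n$ that the given formula for $m$ (featuring floor, ceiling and a square root) indeed places the roots so that each element of $R_{t,m}$ is a non-leading eigenvalue of $Q_n$ and the negative intervals of $p_{t,m}$ contain no eigenvalue in their interiors. The latter condition is essentially automatic once the former is established, thanks to the spacing-$2$ structure of the spectrum; the former reduces to the elementary inequality $2m+4\le n$ (respectively $2m+5\le n$), which follows from routinely bounding the square-root term in the definition of $m$.
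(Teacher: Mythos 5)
Your proposal is correct and follows essentially the same route as the paper's own proof: apply the Ratio-type bound with the polynomial $p_{t,m}$, use the fact that the eigenvalues of $Q_n$ are spaced exactly $2$ apart (plus the extra root at $-n$ when $t=5$) to conclude that no eigenvalue lies in a negative interval of $p_{t,m}$ and hence $\lambda(p_{t,m})=0$, and compute $W(p_{t,m})=\sum_i a_i b_{t,m,i}$ from walk-regularity via Lemma~\ref{lem:Qn_walks}. Your write-up is in fact slightly more careful than the paper's, since you explicitly verify that the elements of $R_{t,m}$ have the right parity and lie in the spectrum and that $p_{t,m}(\lambda_1)>\lambda(p_{t,m})$, points the paper only asserts.
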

    \begin{proof}
        We will use the polynomial $p_{t,m}$ in Theorem \ref{thm:ratio_chi_reg}. By Lemma \ref{lem:Qn_walks}, we have 
        \begin{equation*}
            W(p) = \sum_{i=0}^t (A^t)_{uu} b_{t,m,i} = \sum_{i=0}^t a_ib_{t,m,i}.
        \end{equation*}

        Now if we denote by $x_1<\cdots<x_t$ the roots of $p_{t,m}$, we observe the following: If $t=4$, then $p_{t,m}(x) < 0$ if and only if $x_1<x<x_2$ or $x_3<x<x_4$. Furthermore, we observe that all the $x_i$'s are eigenvalues of $Q_n$ and that they are chosen such that $\abs{x_1-x_2}=\abs{x_3-x_4}=2$. In particular, since the eigenvalues of $Q_n$ are all exactly $2$ apart, we find that $Q_n$ has no eigenvalues which lie between $x_1$ and $x_2$ or between $x_3$ and $x_4$. Hence, for all eigenvalues we have $p_{t,m}(\lambda_i) \geq 0$, with equality being achieved for the aforementioned roots. So $\lambda(p) = 0$. For the case $t=5$, since $p_{t,m}$ is of degree $5$ we have $p_{t,m}(x) < 0$ if and only if $x<x_1$ or $x_2<x<x_3$ or $x_4<x<x_5$ instead. Note that $x_1$ is the smallest eigenvalues of $Q_n$, hence no eigenvalues fall in the first range. For the latter two ranges, the same argument as in the case $t=4$ applies as to why no eigenvalues of $Q_n$ lie in there. Thus here too $\lambda(p) = 0$.
        Plugging these values into \eqref{eq:ratio_chi} gives the desired bound.
    \end{proof}

    As mentioned, the polynomials chosen in Corollary \ref{cor:Qn_chit_ratiobound} are a generalization of polynomials obtained from LP \eqref{eq:ratio_chi_LP_walk} for small values of $n$. These polynomials have a very specific form, with roots occurring in pairs at eigenvalues of $Q_n$ such that $\lambda(p)=0$ (in the case $t=5$ we have an additional root at $-n$ to ensure the same effect). How far out these pairs are from $0$ increases as $n$ increases. 
    
    Running LP \eqref{eq:ratio_chi_LP_walk} for larger $t$ suggests polynomials with a similar pattern are optimal for $t>5$ as well, though it gets increasingly complex. For $t\in\{6,7\}$ the roots are still manageable. As $n$ increases, one pair of roots remains stationary around $0$, whereas the other two pairs move outward like before, with an additional root at $-n$ for $t=7$ analogously to the case for $t=5$. However, determining simple closed formula for $m$ in these cases (i.e. the ``rate'' at which roots move away from $0$) is not as simple. On the other hand, we want to highlight the fact that, for $t=6$, the values of $n$ for which $m$ increases line up with a so-called \emph{crystal ball sequence} (in particular, the crystal ball sequence corresponding to the truncated square tiling), perhaps suggesting a deeper connection there. Such a crystal ball sequence is obtained by taking the partial sums of the \emph{coordination sequence} of a tiling; see \cite{OK1995} for further details. For even larger $t \geq 8$, LP \eqref{eq:ratio_chi_LP_walk} returns polynomials whose multiple pairs of roots on each side of $0$ move outward at different rates, even further complicating the scenario.

\subsection*{Computational results}

In Tables \ref{tab:Qn_t2}, \ref{tab:Qn_t3}, \ref{tab:Qn_t4}, \ref{tab:Qn_t5} from the Appendix, the performance of the new bounds from Section \ref{sec:chit_Qn} are presented. These were computed using \textsc{SageMath}. 

In Tables \ref{tab:Qn_t2} and \ref{tab:Qn_t3} we compare the performance of the previously known bounds by Ngo, Du and Graham (Corollaries \ref{cor:Qn_chi2_ngobound} and \ref{cor:Qn_chi3_ngobound}) to known exact values of $\chi_t(Q_n)$. Wherever there is no citation present in the ``$\chi_t(Q_n)$'' column, the value was obtained computationally. An entry is replaced by ``time'' if it took more than 30 minutes to compute it. In Tables \ref{tab:Qn_t4} and \ref{tab:Qn_t5} we compare the performance of the previously known bound by Ngo, Du and Graham  (Theorem \ref{thm:Qn_chit_ngobound}) to our new spectral bound (Corollary \ref{cor:Qn_chit_ratiobound}). The entries in the ``$\chi_t(Q_n)$'' column are obtained computationally, and replaced by ``time'' if it took more than 30 minutes to compute.

Notably, our new bound from Corollary \ref{cor:Qn_chit_ratiobound} sometimes outperforms the known bound due to Ngo, Du and Graham from Theorem \ref{thm:Qn_chit_ngobound}, for $t\in\{4,5\}$. 
    
\section{The Lee metric and the Lee graph}\label{sec:Gnq_properties}

In this section we focus on a graph theoretic interpretation of error-correcting codes endowed with the Lee distance. Let $q$ be a positive integer greater than $1$, and let us consider the $q$-ary alphabet $\mathcal A_q=\{0,1,\ldots,q-1\}$. Let $n$ be a positive integer. We can endow the set $\mathcal A_q^n$ with the \emph{Lee distance}, which is defined, for $b=(b_1,\ldots,b_n), c=(c_1,\ldots,c_n)$ as
    $$\mathrm{d}_{\mathrm{L}}(b,c)=\sum_{i=1}^n \min\{|b_i-c_i|,q-|b_i-c_i|\}.$$
    An $(n,M,d)_q$-Lee code is defined as a nonempty subset $C\subseteq A_q^n$ of size $|C|=M\geq 2$, where 
    $$d=\mathrm{d}_{\mathrm{L}}(C):=\min\{\mathrm{d}_{\mathrm{L}}(b,c) \,:\, b,c \in C, b \neq c\}.$$
    The integer $n$ is called the \emph{(code) length} of $C$.

  \medskip

One of the central research questions in the theory of codes endowed with the Lee metric is finding the largest size $q$-ary code of minimum Lee distance $d$ and length $n$. Formally, given $q,n$ and $d$, the problem is finding the quantity 
$$A^{\mathrm{L}}_q(n,d):=\max\{ M \,:\, \mbox{ there exists an} (n,M,d)_q\mbox{-Lee code}\}.$$

Just as the $t$-independence number of hypercube graphs is connected to the maximum size of binary codes with minimum Hamming distance $t+1$, we can relate $A^L_q(n,t+1)$ to the $t$-independence number of a family of graphs as well. 
To this end, Kim and Kim \cite{kim_2-distance_2011} defined the \emph{Lee graph} $G(n,q):= C_q \square \cdots \square C_q$, that is, the $n$-fold Cartesian product of the $q$-cycle with itself. Note that the Lee graphs are a generalization of the hypercube graphs, since $Q_n=G(n,2)$. The Lee graph $G(n,q)$ has $q^n$ vertices, and such vertices can be represented as elements of $\mathcal A_q^n$. Figure \ref{fig:Gnq_example} shows an example of what $G(n,q)$ looks like.
    
         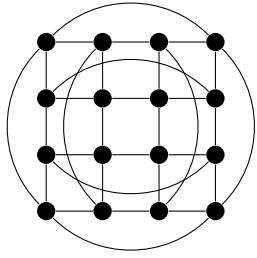
\begin{figure}[!htp]
         \centering
         \tiny
         \begin{tikzpicture}[scale=0.75]
             \node[fill,circle] (u1) at (0,0){};
             \node[fill,circle] (u2) at (0,1){};
             \node[fill,circle] (u3) at (0,2){};
             \node[fill,circle] (u4) at (0,3){};
             \node[fill,circle] (u5) at (1,0){};
             \node[fill,circle] (u6) at (1,1){};
             \node[fill,circle] (u7) at (1,2){};
             \node[fill,circle] (u8) at (1,3){};
             \node[fill,circle] (u9) at (2,0){};
             \node[fill,circle] (u10) at (2,1){};
             \node[fill,circle] (u11) at (2,2){};
             \node[fill,circle] (u12) at (2,3){};
             \node[fill,circle] (u13) at (3,0){};
             \node[fill,circle] (u14) at (3,1){};
             \node[fill,circle] (u15) at (3,2){};
             \node[fill,circle] (u16) at (3,3){};
             \draw (u1)--(u2)--(u3)--(u4);
             \draw (u5)--(u6)--(u7)--(u8);
             \draw (u9)--(u10)--(u11)--(u12);
             \draw (u13)--(u14)--(u15)--(u16);
             \draw (u1)--(u5)--(u9)--(u13);
             \draw (u2)--(u6)--(u10)--(u14);
             \draw (u3)--(u7)--(u11)--(u15);
             \draw (u4)--(u8)--(u12)--(u16);
             \draw (u1) to[bend left=45] (u4);
             \draw (u5) to[bend left=45] (u8);
             \draw (u9) to[bend right=45] (u12);
             \draw (u13) to[bend right=45] (u16);
             \draw (u1) to[bend right=45] (u13);
             \draw (u2) to[bend right=45] (u14);
             \draw (u3) to[bend left=45] (u15);
             \draw (u4) to[bend left=45] (u16);
         \end{tikzpicture}
         \caption{The Lee graph $G(2,4)$.}
         \label{fig:Gnq_example}
     \end{figure}
     
     The Lee graph $G(n,q)$ represents the Lee metric in $\mathcal A_q^n$ in the same way as the hypercube graph $Q_n$ represents the Hamming metric in $\mathcal A_2^n$. This will be made clear in the following results. For a graph $G = (V,E)$, a simple measure of the distance between two vertices $v,u \in V$ is the \emph{geodesic distance} between two vertices, which is the length in terms of the number of edges of the shortest path between the vertices, denoted by $d_G(u,v)$. For two vertices that are not connected in a graph, the geodesic distance is defined as infinite. 
     
     In Section \ref{sec:chit_Gnq} we will apply the eigenvalue bounds on $\chi_t(G)$ from Section \ref{sec:spec_chit} to the Lee graph, and for that we need the following preliminary results.

    \begin{lemma}\label{lem:Lee_Gnq_distance}
         Let $G(n,q)$ be a Lee graph, and consider the vertices as vectors in $\mathcal A_q^n$. Then the geodesic distance in $G(n,q)$ coincides with the Lee metric distance. 
    \end{lemma}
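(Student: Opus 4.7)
The plan is to exploit the Cartesian product structure of the Lee graph together with an explicit computation of the distance in a single cycle $C_q$. Recall that for any two graphs $H_1, H_2$, the geodesic distance in $H_1 \square H_2$ decomposes as $d_{H_1 \square H_2}((u_1,u_2),(v_1,v_2)) = d_{H_1}(u_1,v_1) + d_{H_2}(u_2,v_2)$, and by an immediate induction this extends to any finite Cartesian product:
\begin{equation*}
d_{G(n,q)}(u,v) = \sum_{i=1}^{n} d_{C_q}(u_i,v_i),
\end{equation*}
where $u = (u_1,\ldots,u_n)$ and $v = (v_1,\ldots,v_n)$ are identified with elements of $\mathcal A_q^n$.

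So the proof reduces to the one-dimensional case. First I would verify that, in the cycle $C_q$ on the vertex set $\mathcal A_q = \{0,1,\ldots,q-1\}$ (with edges $\{i, i+1 \bmod q\}$), one has $d_{C_q}(a,b) = \min\{|a-b|, q-|a-b|\}$. The upper bound is immediate by exhibiting two walks of lengths $|a-b|$ and $q-|a-b|$ obtained by traveling around the cycle in opposite directions. For the lower bound, any walk from $a$ to $b$ of length $\ell$ corresponds to a $\pm 1$-sequence summing to an integer congruent to $b-a \pmod q$, and any such sum has absolute value at least $\min\{|a-b|, q-|a-b|\}$, so $\ell$ cannot be smaller.

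Combining the two ingredients yields
\begin{equation*}
d_{G(n,q)}(u,v) = \sum_{i=1}^n \min\{|u_i-v_i|, q-|u_i-v_i|\} = \mathrm{d}_{\mathrm{L}}(u,v),
\end{equation*}
which is exactly the claim. There is no real obstacle here; the only mild subtlety is justifying the additivity of distance in Cartesian products, which is standard but worth stating cleanly, and making sure the one-dimensional cycle distance formula is proved rather than taken for granted.
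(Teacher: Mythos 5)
Your proof is correct, but it is organized differently from the paper's. The paper argues directly in $G(n,q)$: it observes that two vertices are adjacent if and only if exactly one of the coordinatewise Lee distances $d_i=\min\{|u_i-v_i|,q-|u_i-v_i|\}$ equals $1$ and the rest are $0$, deduces that every path from $u$ to $v$ has length at least $\sum_i d_i$ (each edge changes the total by at most one), and then greedily constructs a path that decreases $\sum_i d_i$ by exactly one at each step. You instead factor the problem through two separate ingredients: the additivity of geodesic distance over Cartesian products, $d_{G(n,q)}(u,v)=\sum_i d_{C_q}(u_i,v_i)$, and an explicit computation of $d_{C_q}(a,b)=\min\{|a-b|,q-|a-b|\}$ with a clean lower-bound argument via $\pm 1$-sequences modulo $q$. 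Your route is more modular and makes the lower bound fully rigorous (the paper's ``any path must be of length at least $d_L(u,v)$'' is stated somewhat tersely), at the cost of invoking the product-distance lemma, which you correctly flag as needing at least a citation or a short induction. Both arguments are sound; the paper's is more self-contained, yours isolates the one-dimensional computation that is really doing the work.
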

    \begin{proof}
        Let $u,v \in V(G(n,q))$ with $u = (u_1,\hdots,u_n)$, $v = (v_1,\hdots,v_n)$. Denote $d_i = \min\{q-|u_i-v_i|,|u_i-v_i|\}$, then clearly $d_L(u,v) = \sum_{i=1}^n d_i$. Note that two vertices are adjacent in $G(n,q)$ if and only if exactly one of the $d_i$ is $1$ and all others are $0$. Hence, any path between $u$ and $v$ must be of length at least $d_L(u,v)$. As we can always reduce the sum of the $d_i$ by $1$ at each step, we can find a path of length exactly $d_L(u,v)$. This means $d_{G(n,q)}(u,v) = d_L(u,v)$. This holds for arbitrary $u$ and $v$, meaning these distances are indeed equivalent.
    \end{proof}
    
    \begin{corollary}\label{cor:indep_Gnq_equiv}
        Let $G(n,q)$ be a Lee graph, and consider the vertices as vectors in $\mathcal A_q^n$. Then,  $U$ is a distance-$t$ independent set of size $M$ in $G(n,q)$ if and only if it is an $(n,M,t+1)_q$-Lee code.
    \end{corollary}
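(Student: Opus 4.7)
The plan is to deduce this corollary directly from Lemma \ref{lem:Lee_Gnq_distance}, since both sides of the equivalence are conditions on pairwise distances in $U$, and the lemma tells us the two relevant distance functions coincide on $\mathcal A_q^n$. The whole argument is a matter of unwinding two definitions and substituting one distance for the other.

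First I would restate the two conditions in compatible form. On the graph side, $U$ is a distance-$t$ independent set of size $M$ in $G(n,q)$ precisely when $|U|=M$ and every pair of distinct $u,v \in U$ satisfies $d_{G(n,q)}(u,v) > t$, i.e.\ $d_{G(n,q)}(u,v) \geq t+1$. On the coding side, $U$ is an $(n,M,t+1)_q$-Lee code precisely when $|U|=M$ and $\mathrm{d}_{\mathrm{L}}(u,v) \geq t+1$ for every pair of distinct $u,v \in U$. Both conditions have the same shape: a cardinality constraint plus a uniform lower bound on all pairwise distances.

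Next, I would invoke Lemma \ref{lem:Lee_Gnq_distance}, which asserts $d_{G(n,q)}(u,v) = \mathrm{d}_{\mathrm{L}}(u,v)$ for all $u,v \in V(G(n,q)) = \mathcal A_q^n$. Substituting this identity into either of the two pairwise conditions produces the other, so the biconditional is immediate for both directions simultaneously. The cardinality condition $M \geq 2$ built into the definition of a Lee code is harmless here, because any $M=1$ subset is trivially distance-$t$ independent but is not usually called a code; for the equivalence to be meaningful we implicitly take $M \geq 2$, and the argument is then completely symmetric.

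There is essentially no obstacle: the work has already been done in proving Lemma \ref{lem:Lee_Gnq_distance}, and this corollary is a one-line consequence. The only minor point to flag in the write-up is to make sure both the strict inequality in the graph-theoretic definition of distance-$t$ independence and the non-strict inequality in the coding-theoretic definition of minimum distance are phrased in the same direction, so that the substitution produced by the lemma is a syntactic match rather than an off-by-one.
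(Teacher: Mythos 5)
Your proposal is correct and matches the paper's approach: the paper states this corollary without proof, treating it as an immediate consequence of Lemma \ref{lem:Lee_Gnq_distance}, which is exactly the substitution argument you spell out. Your extra care about the off-by-one between the strict and non-strict inequalities and the $M\geq 2$ convention is sensible but not something the paper bothers to record.
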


The next result will be useful later on to provide some new nonexistence conditions on perfect Lee codes.

    \begin{corollary}\label{cor:alpha_AL_equiv}
        Let $G(n,q)$ be a Lee graph. Then, $\alpha_t(G(n,q)) = A^L_q(n,t+1)$.
    \end{corollary}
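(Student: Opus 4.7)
The plan is to derive this corollary as an almost immediate consequence of Corollary \ref{cor:indep_Gnq_equiv}, which already establishes the key set-theoretic correspondence between distance-$t$ independent sets in $G(n,q)$ and $(n,M,t+1)_q$-Lee codes. Essentially, both sides of the claimed equality are defined as maxima over the same family of subsets of $\mathcal{A}_q^n$, so the result is obtained by translating each definition through the bijection provided by Corollary \ref{cor:indep_Gnq_equiv}.

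Concretely, I would first recall that $\alpha_t(G(n,q))$ is defined as the maximum size of a set $U \subseteq V(G(n,q))$ such that any two distinct vertices of $U$ are at geodesic distance strictly greater than $t$. Identifying the vertex set with $\mathcal{A}_q^n$, Lemma \ref{lem:Lee_Gnq_distance} allows this condition to be rewritten purely in terms of the Lee metric: the pairwise Lee distance must exceed $t$, i.e., be at least $t+1$. Thus the sets $U$ over which the maximum in the definition of $\alpha_t(G(n,q))$ is taken are exactly the $(n,|U|,d)_q$-Lee codes with $d \geq t+1$.

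Next, I would invoke Corollary \ref{cor:indep_Gnq_equiv} directly: a set $U$ of size $M$ achieves $\alpha_t(G(n,q)) = M$ if and only if it is an $(n,M,t+1)_q$-Lee code. Taking maxima on both sides, the maximum size of a distance-$t$ independent set of $G(n,q)$ equals the maximum $M$ for which an $(n,M,t+1)_q$-Lee code exists, which by definition is $A^L_q(n,t+1)$. There is no genuine obstacle here; the only subtlety to address carefully is noting that a code of minimum distance at least $t+1$ is in particular a code of minimum distance $t+1$ in the usual convention (the definition of $A^L_q$ takes the maximum over codes whose minimum distance is at least $t+1$), so the two maxima are taken over identical families. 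This completes the proof in a single short paragraph once everything is spelled out.
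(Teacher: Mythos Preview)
Your proposal is correct and follows essentially the same approach as the paper, which simply states that the result follows directly from Corollary~\ref{cor:indep_Gnq_equiv}. Your write-up is more detailed (in particular, you spell out the subtlety about minimum distance at least $t+1$), but the underlying argument is identical.
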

    \begin{proof}
        This follows directly from Lemma \ref{cor:indep_Gnq_equiv}.
    \end{proof}

    The implications of Corollary \ref{cor:alpha_AL_equiv} will be explored in Section \ref{sec:chit_Gnq} by means of the new spectral bounds on $\chi_t(G)$ from Section \ref{sec:spec_chit}.

\subsection{Combinatorial properties of the Lee graph}

Next we investigate some combinatorial and spectral properties of Lee graphs that will be needed to later on  apply the corresponding eigenvalue bounds on such graphs. Indeed, in order to apply the spectral bounds from Section \ref{sec:spec_chit} to $G(n,q)$, we will have to use the LP given in \eqref{eq:ratio_chi_LP_walk}. For this to be possible, it must hold that $G(n,q)$ is $t$-partially walk-regular for a given $t$. In fact, we can easily show that $G(n,q)$ is \emph{walk-regular}, that is, it is \mbox{$l$-partially} walk-regular for any positive integer~$l$. 
    
    \begin{lemma}\label{lemma:wr}
        The Lee graph $G(n,q)$ is walk-regular.
    \end{lemma}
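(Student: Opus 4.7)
The plan is to deduce walk-regularity of $G(n,q)$ from its vertex-transitivity, using the general principle that any vertex-transitive graph is walk-regular. Recall that walk-regularity requires that for every nonnegative integer $l$, the number of closed walks of length $l$ at a vertex $v$ is independent of $v$; equivalently, writing $A$ for the adjacency matrix of $G(n,q)$, that the diagonal entry $(A^l)_{vv}$ does not depend on $v$.

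First, I would observe that the cycle $C_q$ is a Cayley graph on the cyclic group $\mathbb{Z}/q\mathbb{Z}$ with connection set $\{+1,-1\}$, and is therefore vertex-transitive. Taking the $n$-fold Cartesian product preserves vertex-transitivity: explicitly, $G(n,q)$ is itself the Cayley graph of $(\mathbb{Z}/q\mathbb{Z})^n$ with connection set $\{\pm e_1, \ldots, \pm e_n\}$, where $e_i$ denotes the $i$-th standard basis vector. Hence the translation maps $\tau_a : x \mapsto x+a$ for $a \in (\mathbb{Z}/q\mathbb{Z})^n$ act as graph automorphisms of $G(n,q)$, and this action is transitive on the vertex set $\mathcal{A}_q^n$.

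Next, I would use vertex-transitivity to deduce walk-regularity. Given any two vertices $u,v \in \mathcal{A}_q^n$, choose a graph automorphism $\phi$ of $G(n,q)$ (namely $\tau_{v-u}$) with $\phi(u)=v$. Then $\phi$ restricts to a bijection between closed walks of length $l$ rooted at $u$ and those rooted at $v$, so the number of such walks is the same at every vertex. Equivalently, if $P$ is the permutation matrix of $\phi$, then $PAP^{\top} = A$, and therefore $P A^l P^{\top} = A^l$; comparing diagonal entries gives $(A^l)_{uu} = (A^l)_{vv}$ for every $l \geq 0$, which is precisely walk-regularity. There is no serious obstacle: the result is essentially a direct consequence of the Cayley graph structure, so the entire proof fits in a few lines.
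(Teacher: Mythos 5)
Your proof is correct and takes essentially the same approach as the paper: the paper's argument also uses the translation maps $x \mapsto v_0 + x$ on $(\mathbb{Z}/q\mathbb{Z})^n$ to biject closed walks rooted at $0$ with closed walks rooted at an arbitrary vertex $v_0$. You merely package this as the standard fact that a Cayley graph is vertex-transitive and hence walk-regular, which is the same underlying mechanism.
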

    \begin{proof}
        We can interpret $\mathcal A_q$ as the ring $\mathbb Z/q\mathbb Z$, and consider the vertices of $G(n,q)$ as elements of the module $(\mathbb Z/q\mathbb Z)^n$
        Consider $u_0=(0,\ldots,0)$ and take an arbitrary $v_0 \in (\mathbb Z/q\mathbb Z)^n$.. We will now show for any arbitrary walk from $u_0$ to itself, there is a corresponding walk from $v_0$ to itself. Consider a closed walk from $u_0$ to itself and consider the vertices in this path: $(u_0,u_1,\hdots,u_l,u_0)$. Then, we can construct the following walk from $v_0$ to itself: $(v_0,v_0+u_1,\hdots,v_0+u_l,v_0)$. One can easily verify that any two distinct walks from $u_0$ to itself give rise to two distinct walks from $v_0$ to itself.
    \end{proof}

    Lemma \ref{lemma:wr} is crucial in order to use the Ratio-type eigenvalue bound on $\chi_t(G)$ and its LP implementation \eqref{eq:ratio_chi_LP_walk} later on, since our optimization method requires the graph to be partially walk-regular.

    A graph is said to be \emph{distance-regular} if it is regular and for any two vertices $v$ and $w$, the number of vertices at distance $j$ from $v$ and at distance $k$ from $w$ depends only on $j$, $k$, and the distance between $v$ and $w$. While $G(n,q)$ is always walk-regular, it is only distance-regular for very small cases (for example $G(2,5)$ is already not distance-regular). This means that Delsarte's LP bound \cite{delsarteLP} cannot be applied in a straightforward manner, while our new eigenvalue bounds provide an easy method to be used. Using an association scheme, Astola and Tabus \cite{astola_bounds_2013} were able to overcome this difficulty and obtain bounds $A^L_q(n,d)$ using the LP method.

\subsection{Spectral properties of the Lee graph}\label{sec:Gnq_spec_prelims}
 
 In order to apply the spectral bounds on $\chi_t(G)$ from Section \ref{sec:spec_chit}, we must first find the adjacency spectrum of $G(n,q)$. 
 Denote by $\zeta_q$ the primitive $q$th root of unity $\zeta_q := e^{\frac{2\pi i}{q}}$.

    \begin{lemma}[{\cite[Section 1.4.3]{brouwer_spectra_2012}}]\label{lem:Cq_spectrum}
        The cycle graph $C_q$ has adjacency spectrum 
      \begin{equation*}
            \left\{\zeta_q^l + \zeta_q^{q-l} : l \in [1,q] \right\}.
        \end{equation*}
    \end{lemma}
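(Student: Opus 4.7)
The plan is to exploit the circulant structure of the cycle's adjacency matrix and diagonalize it via the discrete Fourier basis. Label the vertices of $C_q$ by $\mathbb{Z}/q\mathbb{Z}$. Then the adjacency matrix $A$ satisfies $A_{i,j}=1$ iff $j-i\equiv\pm 1\pmod{q}$, so $A=P+P^{-1}$, where $P$ is the cyclic shift permutation matrix defined by $P_{i,j}=1$ iff $j\equiv i+1\pmod{q}$.

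First I would note that $P$ is diagonalized by the Fourier vectors
\[
v_\ell := \bigl(1,\zeta_q^{\ell},\zeta_q^{2\ell},\ldots,\zeta_q^{(q-1)\ell}\bigr)^{\!\top}, \qquad \ell\in[1,q],
\]
with eigenvalues $Pv_\ell=\zeta_q^{\ell}v_\ell$. This is a direct computation: the $i$-th coordinate of $Pv_\ell$ is the $(i+1)$-st coordinate of $v_\ell$, namely $\zeta_q^{(i+1)\ell}=\zeta_q^{\ell}\cdot\zeta_q^{i\ell}$. Since $P^{-1}=P^{\top}$ commutes with $P$, they share the same eigenbasis, and one immediately gets $P^{-1}v_\ell=\zeta_q^{-\ell}v_\ell=\zeta_q^{q-\ell}v_\ell$. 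Adding these,
\[
Av_\ell = (P+P^{-1})v_\ell = \bigl(\zeta_q^{\ell}+\zeta_q^{q-\ell}\bigr)v_\ell.
\]

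Finally, I would observe that the vectors $v_1,\ldots,v_q$ are linearly independent (the matrix whose columns are the $v_\ell$ is a Vandermonde matrix in the distinct roots of unity $\zeta_q^\ell$, hence invertible), so they form a complete eigenbasis of $\mathbb{C}^q$. Therefore the multiset $\{\zeta_q^{\ell}+\zeta_q^{q-\ell}:\ell\in[1,q]\}$ is the full adjacency spectrum of $C_q$. There is no real obstacle here; the only cosmetic point is that as $\ell$ ranges over $[1,q]$, most eigenvalues appear with multiplicity two (since $\ell$ and $q-\ell$ give the same value), which is consistent with the standard description of the spectrum of $C_q$ and with the fact that the listed set is written as a multiset.
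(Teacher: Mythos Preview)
Your argument is correct: writing $A=P+P^{-1}$ for the cyclic shift $P$ and diagonalizing via the Fourier vectors $v_\ell$ is the standard way to compute the spectrum of a circulant matrix, and it yields exactly the claimed multiset. The paper does not give its own proof of this lemma but simply cites \cite[Section~1.4.3]{brouwer_spectra_2012}, where essentially the same circulant diagonalization is carried out; so your approach matches the intended one.
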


    \begin{theorem}[{\cite[Pages 328-329]{barik_spectra_2018}}]\label{thm:prod_spectrum}
        Let $G_1,G_2$ be graphs with respective eigenvalues \linebreak $\{\lambda_1,\hdots,\lambda_n\}$ and $\{\mu_1,\hdots,\mu_m\}$, then $G_1 \square G_2$ has eigenvalues $\{\lambda_i+\mu_j : 1 \leq i \leq n, 1 \leq j \leq m\}$.
    \end{theorem}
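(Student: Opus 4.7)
The plan is to realize the adjacency matrix of the Cartesian product as a Kronecker sum and then exhibit a full system of eigenvectors via tensor products. Concretely, order the vertex set of $G_1 \square G_2$ lexicographically so that $V(G_1 \square G_2) \cong V(G_1) \times V(G_2)$. By the definition of the Cartesian product (an edge between $(u_1,u_2)$ and $(v_1,v_2)$ exists iff either $u_1=v_1$ and $u_2v_2 \in E(G_2)$, or $u_2=v_2$ and $u_1v_1 \in E(G_1)$), the adjacency matrix decomposes as
\begin{equation*}
A(G_1 \square G_2) \;=\; A(G_1) \otimes I_m \;+\; I_n \otimes A(G_2).
\end{equation*}
This identity is the main structural ingredient; once it is in hand, the rest is linear algebra.

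Next I would take eigenvectors $v_i \in \mathbb{R}^n$ of $A(G_1)$ with $A(G_1)v_i = \lambda_i v_i$ and $w_j \in \mathbb{R}^m$ of $A(G_2)$ with $A(G_2)w_j = \mu_j w_j$, and form $v_i \otimes w_j$. A short computation using bilinearity of the Kronecker product gives
\begin{equation*}
\bigl(A(G_1)\otimes I_m + I_n \otimes A(G_2)\bigr)(v_i\otimes w_j) \;=\; (\lambda_i v_i)\otimes w_j + v_i \otimes (\mu_j w_j) \;=\; (\lambda_i+\mu_j)(v_i\otimes w_j),
\end{equation*}
so each $v_i \otimes w_j$ is an eigenvector of $A(G_1 \square G_2)$ with eigenvalue $\lambda_i + \mu_j$.

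Finally, I would argue that these $nm$ eigenvectors span all of $\mathbb{R}^{nm}$. Since $A(G_1)$ and $A(G_2)$ are real symmetric, we can choose orthonormal eigenbases $\{v_1,\dots,v_n\}$ and $\{w_1,\dots,w_m\}$; a standard fact about tensor products then yields that $\{v_i \otimes w_j : 1 \leq i \leq n,\, 1\leq j\leq m\}$ is an orthonormal basis of $\mathbb{R}^{nm}$. This accounts for all $nm$ eigenvalues of $A(G_1 \square G_2)$ (counted with multiplicity), so the spectrum is exactly $\{\lambda_i + \mu_j : 1\leq i \leq n,\, 1\leq j \leq m\}$. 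There is no genuine obstacle in this argument — the only step one should be careful about is the Kronecker sum identity for $A(G_1\square G_2)$, which must be checked directly from the definition of the Cartesian product; everything afterwards is a routine consequence of the tensor-product formalism.
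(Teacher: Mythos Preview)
Your proof is correct and is precisely the standard argument for this classical fact: write $A(G_1\square G_2)=A(G_1)\otimes I_m+I_n\otimes A(G_2)$, check that tensor products of eigenvectors are eigenvectors for the sum of eigenvalues, and use symmetry to get a full orthonormal basis. The paper itself does not prove this statement; it simply cites it from \cite{barik_spectra_2018}, so there is no in-paper proof to compare against, but your approach matches the usual textbook derivation.
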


    \begin{corollary}\label{cor:Gnq_spectrum}
        The adjacency spectrum of the Lee graph  $G(n,q)$ is
        \begin{equation*}
            \left\{\sum_{i=1}^n \zeta_q^{l_i} + \zeta_q^{q-l_i} : l_1,\ldots,l_n \in [1,q] \right\}.
        \end{equation*}
    \end{corollary}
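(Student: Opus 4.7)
The plan is to obtain Corollary \ref{cor:Gnq_spectrum} as an essentially immediate consequence of the two preceding results, using the definition of $G(n,q)$ as an iterated Cartesian product. First I would recall from the definition given in Section \ref{sec:Gnq_properties} that
\[
G(n,q) = \underbrace{C_q \,\square\, C_q \,\square\, \cdots \,\square\, C_q}_{n \text{ times}},
\]
and observe that by Lemma \ref{lem:Cq_spectrum} each factor $C_q$ contributes the multiset of eigenvalues $\{\zeta_q^l + \zeta_q^{q-l} : l \in [1,q]\}$.

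Next I would proceed by induction on $n$. The base case $n=1$ is precisely Lemma \ref{lem:Cq_spectrum}. For the inductive step, assuming the spectrum of $G(n-1,q)$ has the claimed form, write $G(n,q) = G(n-1,q) \,\square\, C_q$ and apply Theorem \ref{thm:prod_spectrum}: the eigenvalues of the Cartesian product are all pairwise sums $\lambda_i + \mu_j$, where $\lambda_i$ ranges over the eigenvalues of $G(n-1,q)$ and $\mu_j$ over those of $C_q$. Plugging in the inductive description of the $\lambda_i$ as sums of $n-1$ terms of the form $\zeta_q^{l_i} + \zeta_q^{q-l_i}$, and $\mu_j$ as the remaining such term indexed by $l_n$, yields exactly the claimed multiset of $q^n$ eigenvalues
\[
\Bigl\{ \sum_{i=1}^n \bigl(\zeta_q^{l_i} + \zeta_q^{q-l_i}\bigr) : l_1,\ldots,l_n \in [1,q] \Bigr\}.
\]

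There is no real obstacle here: the statement is a straightforward combination of the known spectrum of the cycle with the standard spectrum formula for Cartesian products, and everything needed has already been cited. The only mild care required is bookkeeping to ensure that the eigenvalues are counted with the correct multiplicities (in particular that the spectrum is viewed as a multiset indexed by tuples $(l_1,\ldots,l_n) \in [1,q]^n$, giving the correct total of $q^n = |V(G(n,q))|$ eigenvalues), but this follows directly from the multiplicative structure of Theorem \ref{thm:prod_spectrum}.
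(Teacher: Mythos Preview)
Your proposal is correct and follows exactly the approach intended by the paper: the corollary is stated without proof immediately after Lemma~\ref{lem:Cq_spectrum} and Theorem~\ref{thm:prod_spectrum}, precisely because it is a direct consequence of combining the two by induction on~$n$, as you do. Your additional remark on tracking multiplicities via tuples $(l_1,\ldots,l_n)\in[1,q]^n$ is a helpful clarification but not strictly needed beyond what Theorem~\ref{thm:prod_spectrum} already provides.
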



    Unlike for the hypercube graphs $Q_n$, the spectrum of $G(n,q)$ is quite hard to get a grip on. As seen in Lemma \ref{cor:Gnq_spectrum}, the spectrum of $G(n,q)$ consists of sums of cosines (note $\zeta_q^l + \zeta_q^{q-l} = 2\cos(\frac{2l\pi}{q})$) making it quite difficult to theoretically determine $\theta_i$ and $\theta_s$ for Corollary \ref{cor:ratio_chi_2_reg} and Corollary \ref{cor:ratio_chi_3_reg}, respectively. However, with some careful analysis we are able to still determine a closed expression of our bound on $\chi_2(G(n,q))$. Eigenvalues of $G(n,q)$ close or equal to $-1$ play a central role in this approach. Lemma \ref{lem:Gnq_close_eigs} establishes the existence of eigenvalues close to $-1$ in most cases, Lemma \ref{lem:Gnq_-1_eig} fully characterizes when $G(n,q)$ has $-1$ as an eigenvalue.
    
    \begin{lemma}\label{lem:Gnq_close_eigs}
        Let $G(n,q)$ be a Lee graph with $q \geq 4$. Assume 
        $$
        (n,q) \notin \{(1,5),(2,5),(1,6),(2,7),(1,9)\}.
        $$
        Then, $G(n,q)$ has eigenvalues $\theta_1^*,\theta_2^*$ such that $0 \geq \theta_1^* > -1$ and $-1 \geq \theta_2^* \geq -2$.
    \end{lemma}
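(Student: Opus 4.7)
By Corollary~\ref{cor:Gnq_spectrum}, every eigenvalue of $G(n,q)$ has the form $\theta(l_1,\ldots,l_n) = \sum_{i=1}^n \alpha_{l_i}$, where $\alpha_l := 2\cos(2\pi l/q)$ for $l\in\{1,\ldots,q\}$ enumerates the spectrum of $C_q$. Producing $\theta_1^* \in (-1,0]$ and $\theta_2^* \in [-2,-1]$ thus reduces to exhibiting two tuples $(l_1,\ldots,l_n)$ whose cosine sums lie in the respective target windows. The plan is to settle $n=1$ by counting integer points in short subintervals of $[1,q]$, and to bootstrap to $n\ge 2$ by combining a ``good'' handful of coordinates with zero-sum padding on the remaining ones.

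For $n=1$, $\alpha_l \in [-2,-1]$ iff $l\in[q/3,2q/3]$, an interval of length $q/3\ge 1$ which always contains an integer for $q\ge 3$, so $\theta_2^*$ exists unconditionally. Next, $\alpha_l\in(-1,0]$ iff $l\in[q/4,q/3)\cup(2q/3,3q/4]$, and by the symmetry $\alpha_l=\alpha_{q-l}$ it suffices to find an integer in $[q/4,q/3)$. This interval has length $q/12$, so it certainly contains an integer for $q\ge 12$; direct inspection adds $q\in\{4,7,8,10,11\}$ and confirms the failures $q\in\{5,6,9\}$, recovering the three $n=1$ exceptions in the statement.

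For $n\ge 2$, I plan to reduce to the $n=1$ or $n=2$ situation using zero-sum padding on the unused coordinates. When $q$ is even, the pair $(l,q/2-l)$ satisfies $\alpha_l+\alpha_{q/2-l}=0$, so any even number of extra coordinates can be padded to sum $0$, with odd parity absorbed by $\alpha_{q/4}=0$ whenever $4\mid q$. When $q$ is odd, the identity $\sum_{l=0}^{q-1}\alpha_l=0$ (from $\sum_{l=0}^{q-1}\zeta_q^l=0$) yields a zero-sum block of size $q$, which can be combined with doubled coordinates to handle any residual count. The base case $n=2$ admits no padding, and a short enumeration of pairwise sums of eigenvalues of $C_5$ and $C_7$ shows that these small spectra simply cannot cover both target windows---yielding the remaining exceptions $(2,5)$ and $(2,7)$---while for $q$ even, $q=9$, or $q\ge 11$ odd, explicit pair constructions succeed (e.g.\ $(q/2,l)$ with $\alpha_l\in[0,1]$ when $q$ is even, and direct inspection for $q=9$).

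The main obstacle I expect is the $q$-odd, small-$n$ regime, precisely where the non-$n=1$ exceptions live. For $q\ge 11$ odd and $n=2$, I would lean on a spacing estimate---consecutive $\alpha_l$ differ by at most $4\pi/q<2$---so that the target intervals, each of length $1$, are both hit by some pair-sum $\alpha_{l_1}+\alpha_{l_2}$. Combining this quantitative argument with the small-$q$ case-work should isolate the five listed pairs $(n,q)$ as the tight obstructions.
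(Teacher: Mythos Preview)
Your $n=1$ analysis is clean and correct. The gap is in the reduction step: zero-sum padding does not bring an arbitrary $n$ down to $\{1,2\}$ unless $4\mid q$. For $q$ odd there is no pair $\alpha_l+\alpha_{l'}=0$ among the $C_q$-eigenvalues (this would force $l+l'\equiv q/2\pmod q$, impossible for odd $q$), and the only evident vanishing combination is the full block $\sum_{l=1}^{q}\alpha_l=0$ of size $q$; ``doubled coordinates'' does not repair this, since repeating an index contributes $2\alpha_l\neq 0$. Hence you can only step $n\mapsto n-q$, and you would need base cases for every $n\in\{1,\ldots,q\}$, not just $n\in\{1,2\}$. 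For instance, with $q=13$ and $n=7$ your scheme offers no reduction at all. A milder version of the same issue hits $q\equiv 2\pmod 4$: pairs give only even-size zero-sums, so odd $n$ must reduce to $n=1$, which for $q=6$ is itself an exception, leaving $n=3,5,7,\ldots$ unhandled.

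The paper avoids reduction altogether. For $q\notin\{5,6,7,9,10,11\}$ one verifies that $C_q$ carries two eigenvalues $\lambda_1^*\in[0,1)$ and $\lambda_2^*\in(-1,0]$ with $\lambda_1^*-\lambda_2^*<1$ (for $q\ge 21$ take $\alpha_{\lfloor q/4\rfloor}$ and $\alpha_{\lceil q/4\rceil}$). Then for any $n$ the sequence $\theta_a:=a\lambda_1^*+(n-a)\lambda_2^*$, $a=0,\ldots,n$, runs from $n\lambda_2^*\le 0$ up to $n\lambda_1^*\ge 0$ in increments $\lambda_1^*-\lambda_2^*<1$, so some $\theta_a$ lands in $(-1,0]$; this furnishes $\theta_1^*$ uniformly in $n$. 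Adjoining a fixed $\lambda_3\in[-2,-1]$ (every $C_q$ has one) and running the same interpolation on the remaining $n-1$ coordinates yields $\theta_2^*$. For the six leftover values of $q$ the paper does fall back on your block-of-$q$ periodicity, but it then brute-forces a full period of $n$-values, not just $n\in\{1,2\}$. Your spacing heuristic is pointing toward the right idea; the step-size-less-than-one interpolation $\theta_a$ is the mechanism that makes it work for all $n$ simultaneously.
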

    \begin{proof}
        We split into four cases:
        \begin{enumerate}
            \item The case $q \notin \{5,6,7,9,10,11\}$. First, we show that $C_q$ has eigenvalues $\lambda_1^*,\lambda_2^*$ such that $1 > \lambda_1^* \geq 0 \geq \lambda_2^* >-1$ and $\lambda_1^*-\lambda_2^* < 1$. For $q \in [4,20] \setminus \{5,6,7,9,10,11\}$, we manually verify this holds. Now assume $q\geq 21$. Let $l_1 =\lfloor\frac{q}{4}\rfloor$ and observe:
            \begin{equation*}
                \frac{1}{2} > 2\cos(\frac{6\pi}{14}) \geq \zeta_q^{l_1}+\zeta_q^{q-l_1} \geq 0.
            \end{equation*}
            
            Similarly, we can define $l_2 = \lceil\frac{q}{4}\rceil$, and find
            \begin{equation*}
                0 \geq  \zeta_q^{l_2}+\zeta_q^{q-l_2}  \geq 2\cos(\frac{4\pi}{7}) > -\frac{1}{2}.
            \end{equation*}
    
            Thus, if we let $\lambda_i  = \zeta_q^{l_i}+\zeta_q^{q-l_i}$, we have that $\lambda_1^*,\lambda_2^*$ satisfy our assumptions, for $q \geq 21$.
    
            Now we will construct eigenvalues of $G(n,q)$ which satisfy $0 \geq \theta_1^* > -1$ and $-1 \geq \theta_2^* \geq -2$.
    
            First $\theta_1^*$. We consider eigenvalues of the form $\theta_a^* := a \lambda_1^* + (n-a) \lambda_2^*$ for $a \in [0,n]$. Clearly $\theta_n \geq 0 \geq \theta_0$. If $\theta_0 > -1$, we are done. Hence, assume that this is not the case and observe $\theta_a^* - \theta_{a-1} = \lambda_2^* - \lambda_1^* <1$. However, this means that for any open interval of length $1$ between $\theta_n$ and $\theta_0$, there must be some $a \in [1,n]$ such that $\theta_{a}$ lies within this interval. In particular this holds for $(-1,0)$ and thus we have found our desired eigenvalue $\theta_1^*$.
            
            The approach for $\theta_2^*$ is similar, but requires one more observation. Recall that any graph has an eigenvalue, say $\lambda_3$, for which $\lambda_3 \leq -1$, including $C_q$. Furthermore, we know that $\lambda_3 \geq -2$, since $C_q$ is $2$-regular. Now we consider eigenvalues of the form $\theta_a^* := a \lambda_1^* + (n-1-a) \lambda_2^* + \lambda_3$ for $a \in [0,n-1]$. Clearly $\theta_n \geq \lambda_3 \geq \theta_0$. We can assume $\theta_n \leq -2$ and $\theta_0 >-1$, since otherwise we are already done. Then, by the same reasoning as above, there must exist some $a \in [0,n-1]$ such that $\theta_{a}$ lies within $[-2,-1]$. This gives us $\theta_2^*$.
            
            \item The case $q \in \{5,6,7,9,10,11\}$. Note that $C_q$ has eigenvalues $\zeta_q^{l} + \zeta_q^{q-l}$ for $l\in [1,q]$. In particular we observe that the sum of all $q$ of these eigenvalues is equal to
            \begin{equation*}
                \sum_{l=1}^q \zeta_q^{l} + \zeta_q^{q-l} = 2 \sum_{l=1}^q \zeta_q^l = 0.
            \end{equation*}
            Hence, if  $\theta_1^*,\theta_2^*$ are some desired eigenvalues of $G(n,q)$ for some $n$, then by adding the above sum, we can see that they are also eigenvalues of $G(n+q,q)$. 

            We can manually verify that for $q \in \{10,11\}$ we have such $\theta_1^*,\theta_2^*$ for $n \in \{1,\hdots q\}$, for $q \in \{6,9\}$ we have them for $n \in \{2,\hdots,q+1\}$ and for $q \in \{5,7\}$ we have them for $n \in \{3,\hdots,q+2\}$. Hence, by induction we find that they always exist for all larger $n$. The only case still left is $q=7,n=1$, where we can also manually verify that the graph $G(1,7)$ has the desired $\theta_1^*,\theta_2^*$. \qedhere
            \end{enumerate}
    \end{proof}

    In the rest of this section we deal with the problem of characterizing when $-1$ is an eigenvalue of $G(n,q)$, in terms of $n$ and $q$. We will see in Section \ref{sec:chit_Gnq} (Theorem \ref{thm:Gnq_chi2_behaviour})  that this problem naturally arises when trying to compute $\chi_2(G(n,q))$, which in turn is related to the existence of perfect codes in the Lee metric; we will see the latter in Theorem \ref{thm:kim_Gnq_perfect}. 
    For this purpose, we use a result by Lam and Leung \cite{lam2000} on vanishing sums of roots of unity.
    Define
    \begin{align*}
        S(q)&:=\left\{(x_0,\ldots,x_{q-1})\in \mathbb N^{q}\,:\, \sum_{i=0}^{q-1}x_i\zeta_q^i=0\right\}, \\
        W(q)&:=\{x_0+\cdots+x_{q-1}\,:\,(x_0,\ldots,x_{q-1})\in S(q) \}.
    \end{align*}
    
    \begin{theorem}[{\cite[Main Theorem]{lam2000}}]\label{thm:LamLeung}
        Let $q=p_1^{a_1}\cdots p_r^{a_r}$ be the prime factorization of $q$, with $p_i\neq p_j$ for $i\neq j$. Then
        \begin{equation*}
            W(q)=p_1 \mathbb{N} + \cdots + p_r \mathbb{N}.
        \end{equation*}
    \end{theorem}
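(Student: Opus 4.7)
The plan is to prove both inclusions of the equality $W(q) = p_1\mathbb{N} + \cdots + p_r\mathbb{N}$ separately. For the easy direction $\supseteq$, I would exhibit a concrete family of vanishing sums whose lengths collectively span the right-hand side. For each prime $p_i$ dividing $q$, the element $\omega_i := \zeta_q^{q/p_i}$ is a primitive $p_i$th root of unity, so the cyclotomic identity $\Phi_{p_i}(\omega_i) = 1 + \omega_i + \cdots + \omega_i^{p_i-1} = 0$ produces an element of $S(q)$ of total $p_i$. Translating this relation by an arbitrary power of $\zeta_q$ yields further length-$p_i$ vanishing sums, and since $S(q)$ is closed under coordinatewise addition, arbitrary $\mathbb{N}$-linear combinations realize every total in $p_1\mathbb{N} + \cdots + p_r\mathbb{N}$.

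The harder inclusion $\subseteq$ I would prove by strong induction on $q$. The base case $q = p$ prime is immediate: a vanishing sum corresponds to a polynomial $f(X) = \sum_i x_i X^i$ of degree less than $p$ with $f(\zeta_p) = 0$, hence divisible by $\Phi_p(X) = 1 + X + \cdots + X^{p-1}$; degree considerations combined with non-negativity of the $x_i$ force $f(X) = c\,\Phi_p(X)$ for some $c \in \mathbb{N}$, giving $\sum_i x_i = f(1) = cp \in p\mathbb{N}$. For the inductive step I would split into two regimes. When $q = p^a$ is a prime power with $a \geq 2$, I would write each index $i \in \{0,\ldots,q-1\}$ as $i = jp^{a-1} + k$ with $j \in \{0,\ldots,p-1\}$ and $k \in \{0,\ldots,p^{a-1}-1\}$; then $\zeta_q^i = \zeta_q^{k}\,\zeta_p^{j}$, so the vanishing sum rewrites as $\sum_{j=0}^{p-1} A_j(\zeta_q)\,\zeta_p^j = 0$, where $A_j(X) = \sum_k x_{jp^{a-1}+k}\, X^k$. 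Working in $\mathbb{Q}(\zeta_q)$ viewed as a degree-$(p-1)$ extension of $\mathbb{Q}(\zeta_{p^{a-1}})$, this forces linear relations among the $A_j(\zeta_q)$ that, after careful bookkeeping, reduce to vanishing sums of lower modulus and invoke the induction hypothesis. When $q$ has at least two distinct prime factors, I would write $q = p^a m$ with $\gcd(p^a, m) = 1$ and use the Chinese Remainder isomorphism identifying $q$th roots of unity with pairs $(\zeta_{p^a}, \zeta_m)$ to decompose the sum along this product structure, again reducing to smaller moduli.

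The principal obstacle is the inductive step: while the kernel of the evaluation map $\mathbb{Z}[X]/(X^q-1) \to \mathbb{Z}[\zeta_q]$ over $\mathbb{Q}$ is the ideal generated by $\Phi_q(X)$ and is completely understood, the set $S(q)$ consists only of kernel elements with \emph{non-negative} coordinates, a condition that is not preserved under the natural algebraic manipulations (subtraction, Galois action, multiplication by arbitrary polynomials). Hence one cannot simply appeal to the full kernel and must track a combinatorial positivity invariant throughout the induction; carrying this out is precisely the delicate heart of the Lam--Leung argument, and is where most of the technical work lies.
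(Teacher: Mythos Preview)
The paper does not prove this theorem at all: it is quoted as the Main Theorem of Lam and Leung \cite{lam2000} and invoked as a black box in the proof of Lemma~\ref{lem:Gnq_-1_eig}. There is therefore no in-paper argument to compare your proposal against.

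As for the proposal itself, your treatment of the inclusion $W(q)\supseteq p_1\mathbb{N}+\cdots+p_r\mathbb{N}$ is complete and correct, and your base case $q=p$ for the reverse inclusion is also fine. However, the inductive step you outline is not a proof but a plan, and you yourself flag the gap: the decomposition along $q=p^a m$ via the Chinese Remainder Theorem, or the filtration by powers of $p$, does produce linear relations among the partial sums $A_j(\zeta_q)$, but those relations live in $\mathbb{Z}[\zeta_q]$ and need not correspond to elements of $S(\cdot)$ with \emph{non-negative} coefficients at a smaller modulus. Concretely, from $\sum_j A_j(\zeta_q)\zeta_p^j=0$ one gets $A_0(\zeta_q)=A_1(\zeta_q)=\cdots=A_{p-1}(\zeta_q)$ (in the prime-power case), which says nothing directly about any individual $A_j$ vanishing, so there is no obvious smaller vanishing sum to hand to the induction hypothesis. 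Lam and Leung's actual argument does not proceed by this naive induction; it passes through a structural analysis of \emph{minimal} vanishing sums (those not decomposable as a sum of two nonempty vanishing subsums) and a separate combinatorial lemma controlling their weights. Your outline identifies the correct obstacle but does not supply the idea that overcomes it, so as written it is a description of the difficulty rather than a proof.
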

    
    We now define similarly
    \begin{align*}
        S'(q)&:=\left\{(y_0,\ldots,y_{\lfloor\frac{q}{2}\rfloor})\in \mathbb N^{\lfloor\frac{q}{2}\rfloor+1}\,:\, \sum_{i=0}^{\lfloor\frac{q}{2}\rfloor}y_i(\zeta_q^i+\zeta_q^{q-i})=-1  \right\}, \\
        W'(q)&:=\{y_0+y_1+\cdots+y_{\lfloor\frac{q}{2}\rfloor}\,:\,(y_0,\ldots,y_{\lfloor\frac{q}{2}\rfloor})\in S'(q) \}.
    \end{align*}
    With this notation, we have that $-1$ is an eigenvalue of $G(n,q)$ if and only if $n\in W'(q)$.
    \begin{lemma}\label{lem:Gnq_-1_eig}
        Let $q=p_1^{a_1}\cdots p_r^{a_r}$ be the prime factorization of $q$, with $p_1<p_2<\cdots<p_r$. Then:
        \[
            W'(q) = \begin{cases}
                \dfrac{1}{2}\left(\left(-1 + 4\mathbb{N} + \sum_{i=2}^rp_i\mathbb N\right)\cap 2\mathbb N\right), &\text{ if } q \text{ even,}\\[14pt]
            \dfrac{1}{2}\left(\left(-1+\sum_{i=1}^rp_i\mathbb N\right)\cap 2\mathbb N\right), &\text{ if } q \text{ odd.}
        \end{cases}
        \]
    \end{lemma}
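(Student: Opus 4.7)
The plan is to reduce the characterization of $W'(q)$ to Theorem~\ref{thm:LamLeung} by encoding a tuple $(y_0,\ldots,y_{\lfloor q/2\rfloor})\in S'(q)$ as a palindromic vanishing sum of all $q$th roots of unity. Adding $1=\zeta_q^0$ to both sides of the defining equation $\sum y_i(\zeta_q^i+\zeta_q^{q-i})=-1$ produces $\sum_{i=0}^{q-1}x_i\zeta_q^i=0$, where $x_0=2y_0+1$, $x_i=x_{q-i}=y_i$ for $1\le i\le\lfloor q/2\rfloor$ with $i\neq q/2$, and (if $q$ is even) $x_{q/2}=2y_{q/2}$. Thus $x\in S(q)$ is palindromic with $x_0$ odd and, when $q$ is even, $x_{q/2}$ even, and has total $\sum x_i=2n+1$. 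Conversely, any palindromic $x\in S(q)$ satisfying these parity conditions comes from a unique $y\in S'(q)$ in this way, so $n\in W'(q)$ if and only if such an $x$ of total $2n+1$ exists.

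For $q$ odd I would prove $n\in W'(q)\iff 2n+1\in W(q)$, which is exactly the stated formula after the bijection $2n+1\leftrightarrow 2n\in -1+W(q)$. The forward direction is immediate from Theorem~\ref{thm:LamLeung}. For the converse, given a decomposition $2n+1=\sum_{i=1}^r a_ip_i$, I would take $a_i$ copies of the elementary palindromic vanishing sum $V_{p_i}:=\sum_{j=0}^{p_i-1}\zeta_q^{jq/p_i}$, whose support $\{jq/p_i:0\le j<p_i\}$ is closed under negation modulo $q$. The resulting $x$ satisfies $\sum x_i=2n+1$ and $x_0=\sum_i a_i$; since every $p_i$ is odd, $\sum a_ip_i\equiv\sum a_i\pmod 2$ forces $x_0$ to be odd.

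For $q$ even, $p_1=2$ and the extra requirement that $x_{q/2}$ be even complicates matters because $V_2=1+\zeta_q^{q/2}$ contributes $1$ to \emph{both} $x_0$ and $x_{q/2}$. The $V_{p_i}$ for $i\ge 2$ have supports disjoint from $q/2$, so they freely adjust $x_0$ and the total without affecting $x_{q/2}$. To maintain $x_0$ odd and $x_{q/2}$ even while controlling the total, the contributions coming from the prime $2$ must appear either with an even multiplicity of $V_2$ (yielding multiples of~$4$ in the total) or via palindromic shift-pairs of $V_2$ whose supports avoid $\{0,q/2\}$. Cataloguing these basic palindromic vanishing sums and tracking their $(x_0,x_{q/2},\sum x_i)$ contributions should pin down the set of achievable totals as $\{2n+1:2n+1\in 4\mathbb{N}+\sum_{i=2}^r p_i\mathbb{N}\}$, which after the affine rescaling $2n+1\leftrightarrow 2n$ and intersection with $2\mathbb{N}$ yields the claimed formula.

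The hard part is the sufficiency direction for $q$ even: one has to show that every total satisfying the right-hand side is realizable by a palindromic $x\in S(q)$ with the prescribed parities of $x_0$ and $x_{q/2}$, and also that no other totals are achievable. The subtlety is that Theorem~\ref{thm:LamLeung} only controls $\sum x_i$, not the individual parities of the boundary coordinates $x_0$ and $x_{q/2}$; so Lam--Leung must be supplemented by a direct enumeration of elementary palindromic vanishing sums, in particular the palindromic pairs of shifts of $V_2$ (and, if needed, of the $V_{p_i}$ for odd primes), in order to verify that the even-multiplicity constraint at the central coordinate is precisely what produces the factor $4\mathbb{N}$ in place of the $2\mathbb{N}$ that appears in the odd case.
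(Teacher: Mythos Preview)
For odd $q$ your argument is complete and matches the paper's: both directions go through the palindromic encoding $y\mapsto(1+2y_0,y_1,\ldots,y_{(q-1)/2},y_{(q-1)/2},\ldots,y_1)\in S(q)$, with Theorem~\ref{thm:LamLeung} giving $W'(q)\subseteq T(q)$, and the reverse inclusion coming from the explicit sums $\sum_{i=1}^{(p_\ell-1)/2}(\zeta_{p_\ell}^i+\zeta_{p_\ell}^{p_\ell-i})=-1$ padded with copies of $\zeta_q^0+\zeta_q^q=2$. Your $V_{p_i}$ construction is this same witness viewed in $x$-coordinates, and your parity remark $\sum a_i\equiv\sum a_ip_i\pmod 2$ is exactly what the paper uses when it insists that $\sum_\ell y_\ell$ be odd.

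For even $q$ your sketch does not close, and it contains a concrete error. You assert that the odd-prime pieces ``freely adjust $x_0$ and the total without affecting $x_{q/2}$''; while the unshifted $V_{p_i}$ for $i\ge2$ indeed avoid $q/2$, palindromic \emph{shifts} of $V_{p_i}$ can pass through it (for $q=6$, the shift of $V_3$ by $1$ has support $\{1,3,5\}\ni q/2$). This undermines the parity bookkeeping by which you conclude that prime-$2$ contributions must land in $4\mathbb{N}$. Concretely, $(y_0,y_1,y_2,y_3)=(0,1,0,1)$ satisfies $1\cdot1+1\cdot(-2)=-1$, so $2\in W'(6)$, yet $2\cdot2+1=5\notin4\mathbb{N}+3\mathbb{N}$; thus the forward inclusion $W'(q)\subseteq T(q)$ as stated cannot be obtained by any cataloguing of elementary palindromic pieces. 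The paper establishes the reverse inclusion $T(q)\subseteq W'(q)$ in the even case by the same explicit construction as for odd $q$, now with $(\zeta_2+\zeta_2)=-2$ carrying integer coefficient $y_1/2$ (forcing $y_1$ even, whence the $4\mathbb{N}$); its forward inclusion, however, is a bare ``Hence'' after the map into $S(q)$ and does not supply the extra argument beyond Theorem~\ref{thm:LamLeung} that you correctly flag as missing.
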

    \begin{proof}
    Denote
        \[
        T(q) := \begin{cases}
          \dfrac{1}{2}\left(\left(-1 + 4\mathbb{N} + \sum_{i=2}^rp_i\mathbb N\right)\cap 2\mathbb N\right), &\text{ if } q \text{ even,}\\[14pt]
        \dfrac{1}{2}\left(\left(-1+\sum_{i=1}^rp_i\mathbb N\right)\cap 2\mathbb N\right), &\text{ if } q \text{ odd.}  
        \end{cases}
    \]
    
    We consider the even and odd case separately.
    \begin{enumerate}
        \item Assume $q$ even, that is, $p_1=2$. Then, $y_{\lfloor\frac{q}{2}\rfloor} = y_{\frac{q}{2}}$. Now observe that       
        \begin{equation*}
            (y_0,\ldots,y_{\frac{q}{2}})\in S'(q) \implies (1+2y_0,y_1,\ldots, y_{\frac{q}{2}-1},2y_{\frac{q}{2}},y_{\frac{q}{2}-1},\ldots, y_1)\in S(q).
        \end{equation*}
        Hence,
        \begin{equation*}
            W'(q)\subseteq  T(q),
        \end{equation*}

        On the other hand, let $n\in T(q)$. Thus, $n$ is of the form 
        \begin{equation*}
            n = \frac{1}{2}\left(-1+\sum_{i=1}^ry_ip_i\right),
        \end{equation*}
        for some $y_1,\ldots,y_r \in \mathbb N$ such that $\sum_{i=1}^r y_i$ is odd and $y_1$ is even. In particular, there exists a $j \in \{1, \ldots, r\}$ such that $y_j \ge 1$. We now exhibit a linear combination of the $\zeta_q^i+\zeta_q^{q-i}$'s which equals $-1$ and whose coefficients sum to $n$. This is given, for instance, by
        \begin{align*}
            &\frac{y_1}{2}\underbrace{\left(\zeta_{2}+\zeta_{2}\right)}_{-2} +  \sum_{\ell=2}^ry_\ell \underbrace{\sum_{i=1}^{\frac{p_{\ell}-1}{2}}\left(\zeta_{p_\ell}^i+\zeta_{p_\ell}^{p_\ell-i}\right)}_{-1} +  \frac{1}{2}\left(\left(\sum_{\ell=1}^ry_\ell \right)-1\right)\underbrace{(\zeta_{p_r}^0+\zeta_{p_r}^{p_r})}_{2} \\
            = &-\left(\sum_{\ell=1}^ry_\ell\right)  +  \left(\sum_{\ell=1}^ry_\ell \right)-1 = -1.
        \end{align*}
        
        \item Assume $q$ odd. Then $y_{\lfloor\frac{q}{2}\rfloor} = y_{\frac{q-1}{2}}$. Now observe that 
        \begin{equation*}
            (y_0,\ldots,y_{\frac{q-1}{2}})\in S'(q) \implies (1+2y_0,y_1,\ldots, y_{\frac{q-1}{2}},y_{\frac{q-1}{2}},\ldots, y_1)\in S(q).
        \end{equation*}

        Hence,
        \begin{equation*}
            W'(q)\subseteq \frac{1}{2}\left(\left(W(q)-1\right)\cap 2\mathbb N\right) = T(q),
        \end{equation*}
        where the final equality follows from Theorem \ref{thm:LamLeung}.
    
        On the other hand, let $n\in T(q)$. Thus, $n$ is of the form 
        \begin{equation*}
            n = \frac{1}{2}\left(-1+\sum_{i=1}^ry_ip_i\right),
        \end{equation*}
        for some $y_1,\ldots,y_r \in \mathbb N$ such that $\sum_i^r y_i$ is odd. In particular, there exists a $j \in \{1, \ldots, r\}$ such that $y_j \ge 1$. We now exhibit a linear combination of the $\zeta_q^i+\zeta_q^{q-i}$'s which is $-1$ and whose coefficients sum to $n$. This is given, for instance, by
        \begin{align*}
            &\sum_{\ell=1}^ry_\ell \underbrace{\sum_{i=1}^{\frac{p_{\ell}-1}{2}}\left(\zeta_{p_\ell}^i+\zeta_{p_\ell}^{p_\ell-i}\right)}_{-1} +  \frac{1}{2}\left(\left(\sum_{\ell=1}^ry_\ell \right)-1\right)\underbrace{(\zeta_{p_r}^0+\zeta_{p_r}^{p_r})}_{2} \\
            = &-\left(\sum_{\ell=1}^ry_\ell\right)  +  \left(\sum_{\ell=1}^ry_\ell \right)-1 = -1.
        \end{align*}
    \end{enumerate}
    Thus, $W'(q) = T(q)$.
\end{proof}

We can use Lemma \ref{lem:Gnq_-1_eig} to derive some additional results, distinguishing when $q$ is a prime power or not.

\begin{corollary}
    Let $p_1$ be a prime number. If $q=p_1^{a_1}$, then
    \begin{equation*}
        W'(q)= \begin{cases}
            \emptyset &\text{ if } p_1=2, \\[6pt]
            \left\{\dfrac{1}{2}(-1+(1+2x)p_1) \,:\, x \in \mathbb N\right\}. &\text{ if } p_1>2.
        \end{cases}
    \end{equation*}
    In particular, there exist arbitrarily large $n$'s such that $-1$ is not an eigenvalue of $G(n,q)$.
\end{corollary}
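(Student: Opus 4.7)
The plan is to specialize Lemma~\ref{lem:Gnq_-1_eig} to the single-prime case $r=1$ and then read off both the explicit form of $W'(q)$ and the ``in particular'' statement. The only convention to fix is the interpretation of the empty sum $\sum_{i=2}^{r}p_i\mathbb{N}$ when $r=1$, which should contribute only $\{0\}$; once this is settled, the corollary follows essentially at once from the lemma.

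For the case $p_1 = 2$, I would apply the even branch of Lemma~\ref{lem:Gnq_-1_eig} with $r=1$ to obtain $W'(q)=\tfrac{1}{2}\left((-1+4\mathbb{N})\cap 2\mathbb{N}\right)$. Every element of $-1+4\mathbb{N}$ is congruent to $3 \pmod 4$ and in particular odd, so the intersection with $2\mathbb{N}$ is empty, giving $W'(q)=\emptyset$.

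For the case $p_1 > 2$, the odd branch of Lemma~\ref{lem:Gnq_-1_eig} with $r=1$ gives $W'(q)=\tfrac{1}{2}\left((-1+p_1\mathbb{N})\cap 2\mathbb{N}\right)$. Because $p_1$ is odd, $-1+p_1 y$ is even if and only if $y$ is odd; parametrizing the odd natural numbers as $y=1+2x$ with $x\in\mathbb{N}$ then turns the set into $\left\{\tfrac{1}{2}(-1+(1+2x)p_1)\,:\,x\in\mathbb{N}\right\}$, exactly as claimed.

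For the ``in particular'' statement: when $p_1=2$, the set $W'(q)$ is empty, so every $n\in\mathbb{N}$ satisfies $n\notin W'(q)$. When $p_1>2$, the elements of $W'(q)$ can be rewritten as $\tfrac{p_1-1}{2}+xp_1$, so they all lie in the single residue class $\tfrac{p_1-1}{2}\pmod{p_1}$; any $n$ in a different residue class (of which there are arbitrarily large representatives) then satisfies $n\notin W'(q)$, meaning $-1$ is not an eigenvalue of $G(n,q)$. I expect no real obstacle beyond the empty-sum bookkeeping and the parity analysis in the odd case.
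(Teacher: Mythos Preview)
Your proposal is correct and follows exactly the route the paper intends: the corollary is stated without proof because it is a direct specialization of Lemma~\ref{lem:Gnq_-1_eig} to $r=1$, and you have carried out precisely that specialization together with the obvious parity check and residue-class observation.
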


When $q$ is not a prime power, things are different.
 First, recall the following well-known fact.

\begin{lemma}\label{lem:additive}
        For any $a,b\in \mathbb N$ with $\gcd(a,b)=1$, we have
    $$m\in a\mathbb N+b\mathbb N$$
    for every $m> (a-1)(b-1)$.
\end{lemma}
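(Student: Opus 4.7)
The plan is to prove this classical Frobenius–Sylvester result for two coprime generators using B\'ezout's identity plus a reduction of the first coefficient modulo $b$. First, since $\gcd(a,b)=1$, B\'ezout's identity gives integers $x_0,y_0$ with $m=ax_0+by_0$. Replacing $(x_0,y_0)$ by $(x_0+kb,y_0-ka)$ for a suitable integer $k$, I may assume $0\le x\le b-1$ and $m=ax+by$ with $y\in\mathbb Z$. The task then reduces to showing $y\ge 0$.

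For this, I would estimate $by=m-ax$ from below. Since $x\le b-1$, we have $ax\le a(b-1)$, hence
\[
by \;=\; m-ax \;\ge\; m-a(b-1) \;>\; (a-1)(b-1)-a(b-1) \;=\; -(b-1).
\]
Dividing by $b>0$ gives $y>-(b-1)/b>-1$, and since $y\in\mathbb Z$ this forces $y\ge 0$. Together with $x\ge 0$, we conclude $m=ax+by\in a\mathbb N+b\mathbb N$, as desired. The degenerate cases where $a=0$, $b=0$, or one of them equals $1$ need a brief separate remark (if $a=1$ or $b=1$ the statement is trivial, and $\gcd(a,b)=1$ with one of them zero forces the other to be $1$).

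There is no real obstacle: the argument is a one-line reduction modulo $b$ followed by a linear inequality. The only point that deserves care is the strictness of the bound $m>(a-1)(b-1)$ versus the sharper classical statement $m\ge (a-1)(b-1)$; the weaker hypothesis stated in the lemma is in fact sufficient for the inequality $by>-(b-1)$ above, so nothing extra is needed. I would therefore present the proof in essentially three lines, emphasising the choice of $x$ in the range $[0,b-1]$ as the key move.
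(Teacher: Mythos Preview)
Your proof is correct and is the standard B\'ezout--reduction argument for the two-generator Frobenius problem. The paper itself does not supply a proof of this lemma: it is introduced with ``recall the following well-known fact'' and left unproved, so there is nothing to compare your approach against.
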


\begin{corollary}\label{cor:asymptotic_-1}
    Let $q=p_1^{a_1}\cdot \cdots \cdot p_r^{a_r}$, with $r>1$ and $p_1<\ldots <p_r$. Then, $-1$ is an eigenvalue of $G(n,q)$ for every 
    \begin{equation*}
        n > \begin{cases}
            \dfrac{-1+(2p_1-1)(p_2-1))}{2}, &\text{ if } q \text{ even,} \\[12pt]
            \dfrac{-1+(p_1-1)(p_2-1))}{2}, &\text{ if } q \text{ odd.}
        \end{cases}  
    \end{equation*} 
\end{corollary}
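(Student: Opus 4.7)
The plan is to use the explicit description of $W'(q)$ from Lemma \ref{lem:Gnq_-1_eig} together with Lemma \ref{lem:additive}. Recall that $-1$ is an eigenvalue of $G(n,q)$ precisely when $n \in W'(q)$, so the goal is to show membership of $n$ in the set $T(q)$ from Lemma \ref{lem:Gnq_-1_eig} for all sufficiently large $n$.

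First I would handle the case when $q$ is odd. Then $n \in W'(q)$ is equivalent to $2n+1 \in p_1\mathbb N + \cdots + p_r\mathbb N$ together with $2n \in 2\mathbb N$. The latter condition is trivial, so only the representability of $2n+1$ as a nonnegative integer combination of the $p_i$'s remains. Since $\gcd(p_1,p_2)=1$, Lemma \ref{lem:additive} gives $m \in p_1\mathbb N + p_2\mathbb N$ whenever $m > (p_1-1)(p_2-1)$. Applying this to $m=2n+1$ yields $n \in W'(q)$ as soon as $n > \tfrac{(p_1-1)(p_2-1)-1}{2}$, which is the desired bound for the odd case.

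For the even case, $p_1=2$ and Lemma \ref{lem:Gnq_-1_eig} shows that $n \in W'(q)$ is equivalent to $2n+1 \in 4\mathbb N + p_2\mathbb N + \cdots + p_r\mathbb N$. Setting the coefficients of $p_3,\ldots,p_r$ to zero, it suffices to prove $2n+1 \in 4\mathbb N + p_2\mathbb N$. Since $\gcd(4,p_2)=1$, Lemma \ref{lem:additive} gives this whenever $2n+1 > (4-1)(p_2-1) = 3(p_2-1)$. Rewriting $3 = 2p_1-1$, this is precisely $n > \tfrac{(2p_1-1)(p_2-1)-1}{2}$, matching the stated bound.

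There is no real obstacle here: the corollary is essentially just the combination of Lemma \ref{lem:Gnq_-1_eig} (which reduces the eigenvalue question to an additive representability question) with Lemma \ref{lem:additive} (which handles such representability via the classical Frobenius-type estimate for two coprime integers). The only small subtlety worth double-checking is that the extra parity/divisibility constraints in the description of $T(q)$ are automatically satisfied: in the odd case, all $p_i$ are odd so any representation of the odd integer $2n+1$ automatically has an odd number of nonzero terms; in the even case, writing the $p_1=2$ contribution as $4\mathbb N$ already encodes the parity condition $y_1 \in 2\mathbb N$ present in the proof of Lemma \ref{lem:Gnq_-1_eig}.
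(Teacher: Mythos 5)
Your proof is correct and follows essentially the same route as the paper: reduce via Lemma \ref{lem:Gnq_-1_eig} to membership of $2n+1$ in the numerical semigroup generated by the two smallest relevant generators ($p_1,p_2$ in the odd case, $4$ and $p_2$ in the even case) and then apply the Frobenius-type estimate of Lemma \ref{lem:additive}. The parity checks you flag at the end are indeed automatic, exactly as you argue.
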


\begin{proof}
  By Lemma \ref{lem:Gnq_-1_eig}, we have that $W'(q)=T(q)$. Moreover, one can check that 
  \begin{equation*}
      T(q)\supseteq \begin{cases}
        \dfrac{1}{2}((-1+2p_1\mathbb N + p_2\mathbb N)\cap 2\mathbb N), &\text{ if } q \text{ even,} \\[12pt]
        \dfrac{1}{2}((-1+p_1\mathbb N + p_2\mathbb N)\cap 2\mathbb N), &\text{ if } q \text{ odd.}
      \end{cases}
  \end{equation*}
  Using Lemma \ref{lem:additive}, we deduce that $(p_1\mathbb N + p_2\mathbb N)\supseteq \{x\,:\, x>(p_1-1)(p_2-1)\}$, from which we conclude.
\end{proof}

    \subsection{Bounding the distance chromatic number of the Lee graph}\label{sec:chit_Gnq}

While Sopena and Wu \cite{sopena_coloring_2010} and Pór and Wood \cite{por_colourings_2009} have investigated the distance-$2$ chromatic number for products of cycles, they did this in a more general setting than $G(n,q)$, allowing the cycles to be also different. Kim and Kim \cite{kim_2-distance_2011} were the first to investigate $\chi_2(G(n,q))$. Their work was motivated by past research on $\chi_2(Q_n)=\chi_2(G(n,2))$ and the fact that there is a relation between $\chi_2(G(n,q))$ and perfect Lee codes. Kim and Kim approached the problem of finding $\chi_2(G(n,q))$ by using notions and results from coding theory, in particular perfect Lee codes.
    One simple lower bound on $\chi_2(G(n,q))$ follows  using the graph regularity.
    \begin{lemma}[{\cite[Lemma 2.1]{kim_2-distance_2011}}]
        $\chi_2(G(n,q)) \geq 2n+1$.
        \end{lemma}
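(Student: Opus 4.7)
The plan is to use the fact that, for any vertex $v$, all vertices in the closed neighborhood $N[v]=\{v\}\cup N(v)$ lie at distance at most $2$ from each other, and hence must all receive different colors in any distance-$2$ coloring. This gives a lower bound equal to the size of $N[v]$, which is controlled by the regularity of $G(n,q)$.

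More concretely, I would proceed as follows. First, observe that $G(n,q)$ is $2n$-regular: it is the Cartesian product of $n$ copies of the cycle $C_q$, which is $2$-regular, and degrees add under Cartesian products. Hence for any vertex $v\in V(G(n,q))$, the closed neighborhood $N[v]$ contains exactly $2n+1$ distinct vertices.

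Next, I would check that any two distinct vertices $u_1,u_2\in N[v]$ satisfy $d_{G(n,q)}(u_1,u_2)\le 2$. Indeed, either one of them equals $v$ (in which case they are adjacent, so at distance $1$), or both lie in $N(v)$, in which case $u_1\text{--}v\text{--}u_2$ is a path of length $2$. Thus in any distance-$2$ coloring of $G(n,q)$, the $2n+1$ vertices of $N[v]$ must be assigned pairwise distinct colors, which yields $\chi_2(G(n,q))\ge 2n+1$.

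There is essentially no obstacle here beyond correctly invoking regularity of the Cartesian product and the definition of a distance-$2$ coloring. The statement is short and self-contained; the argument I would write up is a one-paragraph observation and does not rely on any of the combinatorial or spectral machinery developed earlier in Section~\ref{sec:Gnq_properties}.
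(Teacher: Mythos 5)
Your argument is correct and is exactly the route the paper intends: it states the lemma as a citation to Kim and Kim and remarks only that the bound ``follows using the graph regularity,'' which is precisely your closed-neighborhood/clique-in-$G^2$ observation. The only detail worth making explicit is that $2n$-regularity requires $q\ge 3$ (for $q=2$ the graph is $Q_n$ and the stated bound does not apply), which is the standing assumption in the cited source.
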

    
    Using this connection to Lee codes, Kim and Kim \cite{kim_2-distance_2011} focused their study on the case $n=3$. For several smaller cases they were able to provide exact parameter values, as the following result shows.
    \begin{theorem}[{\cite[Theorems 3.2, 3.4, 3.5, 3.7, 3.8 and Corollary 3.9]{kim_2-distance_2011}}]\label{thm:kim_G3q}
        Let $q \geq 3$, then 
        \begin{equation*}
        \chi_2(G(3,q)) = \begin{cases}
            9 &\text{ if } q=3,5,6, \\
            8 &\text{ if } q=4l \text{ and }l \text{ not a multiple of } 7,\\
            7 &\text{ if } q=7l.
        \end{cases}
        \end{equation*}
        In particular, $\chi_2(G(3,q)) = 7$ if and only if $q=7l$.
    \end{theorem}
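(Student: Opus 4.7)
The plan is to reduce the characterization of $\chi_2(G(3,q))$ to existence questions about perfect Lee codes in $\mathbb Z_q^3$, and then handle each regime separately. Since $G(3,q)$ is vertex-transitive and $\chi_2(G(3,q)) \geq 2n+1 = 7$ by the regularity bound, a $7$-coloring partitions $\mathbb Z_q^3$ into $7$ distance-$2$ independent sets, which by Corollary \ref{cor:indep_Gnq_equiv} are Lee codes of length $3$ and minimum Lee distance $3$, each of size $q^3/7$. Since the Lee ball of radius $1$ has exactly $2n+1 = 7$ elements, each class must be a perfect $1$-Lee code, which forces $7 \mid q^3$ and hence $7 \mid q$. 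Conversely, for $q = 7l$ I would show that the coloring
\begin{equation*}
c(x_1,x_2,x_3) = x_1 + 2x_2 + 3x_3 \pmod{7}
\end{equation*}
is a valid distance-$2$ coloring: a displacement of Lee weight $1$ shifts $c$ by $\pm 1,\pm 2,\pm 3$, and a displacement of Lee weight $2$ shifts $c$ by $\pm 2i$ for $i\in\{1,2,3\}$ or by $\pm i\pm j$ for distinct $i,j\in\{1,2,3\}$, and all these values are nonzero modulo $7$. This yields both directions of the equivalence $\chi_2(G(3,q)) = 7 \iff q\in 7\mathbb N$.

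For $q = 4l$ with $7 \nmid l$, the previous paragraph already gives $\chi_2(G(3,q)) \geq 8$. For the matching upper bound, I would first exhibit an explicit $8$-coloring of $G(3,4)$ via a direct partition of $\mathbb Z_4^3$ into $8$ Lee codes of size $8$ (found by small case analysis), and then lift it to $G(3,4l)$ by coordinate-wise reduction modulo $4$: if $c:\mathbb Z_4^3\to[1,8]$ is the base coloring, set $\tilde c(x_1,x_2,x_3) = c(x_1\bmod 4, x_2\bmod 4, x_3\bmod 4)$. Two vertices in $\mathbb Z_{4l}^3$ at Lee distance at most $2$ differ coordinate-wise by integers in $\{0,\pm 1,\pm 2\}$, which reduce mod $4$ to a vector of Lee weight at most $2$ in $\mathbb Z_4^3$, so $\tilde c$ inherits the distance-$2$ property.

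For $q\in\{3,5,6\}$, we have $7\nmid q$, so $\chi_2(G(3,q)) \geq 8$ by the first paragraph, and the task reduces to ruling out $8$-colorings and exhibiting $9$-colorings. The upper bound comes from explicit $9$-colorings tailored to each $q$. For $q=3$ the lower bound is clean: $G(3,3) = H(3,3)$, a distance-$2$ independent set is a ternary length-$3$ code with Hamming distance at least $3$, and the Singleton bound gives $\alpha_2(G(3,3))\leq 3$, whence $\chi_2 \geq 27/3 = 9$. For $q\in\{5,6\}$ I would apply the optimal Ratio-type bound of Corollary \ref{cor:ratio_chi_2_reg} using the eigenvalues of $G(3,q)$ supplied by Corollary \ref{cor:Gnq_spectrum}, computing the relevant $\theta_i,\theta_{i-1}$ from the sums of cosines and hoping that this pushes the lower bound above $8$.

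The main obstacle is expected to be the case $q = 6$. The sphere-packing bound only gives $\alpha_2(G(3,6))\leq\lfloor 216/7\rfloor=30$, so the trivial $n/\alpha_2$ argument yields at best $\chi_2\geq\lceil 216/30\rceil=8$, which is insufficient. If the spectral bound of Corollary \ref{cor:ratio_chi_2_reg} also falls short, the fallback is a structural argument tailored to $\mathbb Z_6^3$: one would enumerate the possible size profiles of an $8$-partition into Lee codes of minimum distance $3$ and use divisibility together with parity-of-coordinate constraints specific to the composite modulus $q=6$ to derive a contradiction.
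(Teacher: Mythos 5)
First, note that the paper does not prove Theorem \ref{thm:kim_G3q}: it is imported verbatim from Kim and Kim \cite{kim_2-distance_2011}, and only the final clause ($\chi_2(G(3,q))=7$ if and only if $7\mid q$) is rederived later as a consequence of Theorem \ref{thm:golomb_d=3}. So there is no in-paper proof to match your attempt against; what follows is an assessment of the attempt on its own terms.

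Your treatment of the value $7$ is correct and is essentially the sphere-packing/perfect-code argument (each of the $7$ colour classes has size at most $q^3/7$, so all are perfect, forcing $7\mid q$; conversely $c(x)=x_1+2x_2+3x_3 \bmod 7$ works because $\pm1,\pm2,\pm3,\pm2i,\pm i\pm j$ are all nonzero mod $7$). The lift of an $8$-colouring of $G(3,4)$ to $G(3,4l)$ by reduction mod $4$ is also sound, granted the unproved but true existence of the base $8$-colouring. The genuine gap is in the lower bound $\chi_2(G(3,q))\ge 9$ for $q\in\{5,6\}$. Sphere packing gives only $\alpha_2(G(3,5))\le 17$ and $\alpha_2(G(3,6))\le 30$, hence only $\chi_2\ge 8$ in both cases, and the spectral route you propose as the main tool provably does not close this: the paper's own Table \ref{tab:G3q_chi2} records that Corollary \ref{cor:ratio_chi_2_reg} yields exactly $8$ for both $G(3,5)$ and $G(3,6)$, while the true value is $9$. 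Ruling out an $8$-colouring here is the hard core of Kim and Kim's Theorems 3.4 and 3.5 and requires sharper combinatorial upper bounds on the sizes of $(3,M,3)_q$-Lee codes (roughly, $A^{\mathrm{L}}_5(3,3)\le 15$ and $A^{\mathrm{L}}_6(3,3)\le 26$, or a direct analysis of putative $8$-partitions) than anything supplied by sphere packing or by the Ratio-type bound. Your ``fallback structural argument'' for $q=6$ is named but not carried out, and for $q=5$ you explicitly rely on a hope that the spectral bound exceeds $8$, which it does not; the explicit $9$-colourings asserted for $q\in\{3,5,6\}$ are likewise not exhibited (though the $q=3$ case, via the cosets of the ternary repetition code and the Singleton bound, is complete as you state it).
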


    \begin{corollary}[{\cite[Corollary 3.9]{kim_2-distance_2011}}]
        Assume that  $q=3l$ or $q=5l$. Then, $\chi_2(G(3,q)) \leq 9$.
    \end{corollary}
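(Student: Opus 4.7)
The plan is to construct an explicit $9$-coloring of $G(3,q)$ by pulling back, via coordinate-wise reduction, a valid distance-$2$ coloring of the smaller Lee graph $G(3,m)$ where $m\in\{3,5\}$ is a divisor of $q$ (which exists by the hypothesis $q=3l$ or $q=5l$). By Theorem \ref{thm:kim_G3q}, we have $\chi_2(G(3,m))=9$, so we may fix a proper distance-$2$ coloring $c_0:(\mathbb Z/m\mathbb Z)^3\to\{1,\ldots,9\}$ of $G(3,m)$. Define the projection $\pi:(\mathbb Z/q\mathbb Z)^3\to(\mathbb Z/m\mathbb Z)^3$ by componentwise reduction modulo $m$ (well-defined since $m\mid q$), and set $c:=c_0\circ\pi$. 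Clearly $c$ uses at most $9$ colors, so the bound $\chi_2(G(3,q))\leq 9$ follows if we show $c$ is a proper distance-$2$ coloring of $G(3,q)$.

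By Lemma \ref{lem:Lee_Gnq_distance}, two vertices lie at graph-distance at most $2$ in $G(3,q)$ exactly when they lie at Lee distance at most $2$ in $(\mathbb Z/q\mathbb Z)^3$. Hence the key step is the following verification: for any distinct $u,v\in(\mathbb Z/q\mathbb Z)^3$ with $\mathrm{d}_{\mathrm L}(u,v)\leq 2$, the projected vertices satisfy $\pi(u)\neq \pi(v)$ and $\mathrm{d}_{\mathrm L}(\pi(u),\pi(v))\leq 2$ in $(\mathbb Z/m\mathbb Z)^3$. Given this, $c_0(\pi(u))\neq c_0(\pi(v))$ by properness of $c_0$, so $c(u)\neq c(v)$, as required.

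To check the key step, I argue coordinate-by-coordinate. If coordinate $i$ contributes Lee distance $d_i\in\{1,2\}$ in $\mathbb Z/q\mathbb Z$, then $u_i-v_i\equiv \pm d_i\pmod{q}$, hence $\pi(u)_i-\pi(v)_i\equiv \pm d_i\pmod{m}$. Since $d_i\in\{1,2\}$ and $m\in\{3,5\}$, this residue is nonzero, and the induced per-coordinate Lee distance in $\mathbb Z/m\mathbb Z$ is positive and at most $d_i$ (in fact equal to $d_i$, except that for $m=3$ a Lee distance of $2$ collapses to $1$ because $\pm 2\equiv \mp 1\pmod{3}$). Summing over the at most two coordinates in which $u$ and $v$ differ, the joint Lee distance satisfies $0<\mathrm{d}_{\mathrm L}(\pi(u),\pi(v))\leq \mathrm{d}_{\mathrm L}(u,v)\leq 2$, and in particular $\pi(u)\neq \pi(v)$.

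The construction is essentially routine, and no obstacle of substance arises; the only small point requiring attention is that modular reduction could in principle identify distinct vertices, but the condition $m\in\{3,5\}$ together with $\mathrm{d}_{\mathrm L}(u,v)\leq 2$ excludes this precisely because $\pm 1$ and $\pm 2$ remain nonzero residues modulo $3$ and $5$.
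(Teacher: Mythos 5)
Your proof is correct. The paper itself gives no proof of this corollary --- it is stated as a citation to Kim and Kim --- but your argument is the standard one underlying that result: lift a $9$-coloring of $G(3,m)$, $m\in\{3,5\}$ (which exists by Theorem~\ref{thm:kim_G3q}), through coordinatewise reduction modulo $m$, and verify that reduction cannot collapse or separate beyond distance $2$ any pair at Lee distance at most $2$, precisely because $\pm 1,\pm 2$ remain nonzero modulo $3$ and $5$ and Lee weights do not increase under the projection. Nothing is missing.
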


    In what follows, we will use the results from Section \ref{sec:Gnq_spec_prelims} on the spectrum of $G(n,q)$ to obtain new eigenvalue bounds on $\chi_2(G(n,q))$.  
    We start by investigating how the bound from Corollary \ref{cor:ratio_chi_2_reg} behaves when both $\theta_i$ and $\theta_{i-1}$ are close to $-1$.

     \begin{corollary}\label{cor:eig_eps_bound}
        Let $G$ be a $k$-regular graph with unique eigenvalues $\theta_0 > \cdots > \theta_d$. Let $\theta_i$ be the largest eigenvalue such that $\theta_i \leq -1$. If $\theta_i \geq -2$ and $\theta_{i-1} \leq 0$, then 
        \begin{equation*}
            \chi_2(G) \geq
            \begin{cases}
                k+1, &\text{ if } \theta_i = -1,\\
                k+2, &\text{ else.}
            \end{cases} 
        \end{equation*}
    \end{corollary}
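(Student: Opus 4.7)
The plan is to invoke Corollary \ref{cor:ratio_chi_2_reg} directly and massage the resulting ratio. Since $G$ is $k$-regular we have $\theta_0=k$, so the bound reads
\[
\chi_2(G)\;\geq\;\frac{n}{\lfloor n\,r\rfloor},\qquad r\;:=\;\frac{k+\theta_i\theta_{i-1}}{(k-\theta_i)(k-\theta_{i-1})}.
\]
Note that by maximality of $\theta_i$ we have $\theta_{i-1}>-1$, so combining with the hypothesis $\theta_{i-1}\le 0$ gives $\theta_{i-1}\in(-1,0]$. In the case $\theta_i=-1$, I would plug in and observe the cancellation $k+\theta_i\theta_{i-1}=k-\theta_{i-1}$ against the factor $k-\theta_{i-1}$ in the denominator, reducing $r$ to $\tfrac{1}{k+1}$. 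Then $\lfloor n/(k+1)\rfloor\le n/(k+1)$ yields $\chi_2(G)\geq k+1$ as desired.

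For the case $-2\le\theta_i<-1$, the target is to sharpen the lower bound by one unit. Since $\chi_2(G)$ is an integer, it suffices to prove the strict inequality $r<\tfrac{1}{k+1}$: this forces $\lfloor n r\rfloor\le n r<\tfrac{n}{k+1}$, hence $\tfrac{n}{\lfloor n r\rfloor}>k+1$, which by integrality upgrades to $\chi_2(G)\ge k+2$. So the whole task reduces to verifying that strict inequality.

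The key step is the algebraic identity
\[
(k-\theta_i)(k-\theta_{i-1})-(k+1)(k+\theta_i\theta_{i-1})\;=\;-k(1+\theta_i)(1+\theta_{i-1}),
\]
which I would check by direct expansion. Under the assumptions $\theta_i<-1$ and $\theta_{i-1}>-1$, the factors $1+\theta_i$ and $1+\theta_{i-1}$ have opposite signs, so the right-hand side is strictly positive. This is exactly $r<\tfrac{1}{k+1}$ (after noting that $(k-\theta_i)(k-\theta_{i-1})>0$, which also ensures $p(\lambda_1)>\lambda(p)$ so that Corollary \ref{cor:ratio_chi_2_reg} actually applies). I do not expect a serious obstacle: the only non-routine point is spotting that the relevant difference factors as $-k(1+\theta_i)(1+\theta_{i-1})$, which makes the sign analysis transparent. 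The rest is a combination of that factorization with the integrality of $\chi_2(G)$, exactly as above.
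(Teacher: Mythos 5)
Your proof is correct, and it follows the paper's overall strategy (apply Corollary \ref{cor:ratio_chi_2_reg}, observe the cancellation when $\theta_i=-1$, and otherwise show the ratio strictly exceeds $k+1$ and invoke integrality of $\chi_2$). The algebraic core, however, is genuinely different. The paper parametrizes $\theta_i=-1-\epsilon_1$, $\theta_{i-1}=-1+\epsilon_2$ with $\epsilon_1,\epsilon_2\in(0,1]$ and proves the two-sided estimate $k+1<\frac{(\theta_0-\theta_i)(\theta_0-\theta_{i-1})}{\theta_0+\theta_i\theta_{i-1}}\le k+2$, concluding that the ceiling equals $k+2$; the upper estimate is where the hypotheses $\theta_i\ge -2$ and $\theta_{i-1}\le 0$ are really consumed. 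You instead isolate the single inequality that matters via the factorization $(k-\theta_i)(k-\theta_{i-1})-(k+1)(k+\theta_i\theta_{i-1})=-k(1+\theta_i)(1+\theta_{i-1})$, whose sign is transparent from $\theta_i<-1<\theta_{i-1}$, and then let integrality do the rest. This is leaner: the upper bound $\le k+2$ is not needed for the stated conclusion, and in your argument the hypotheses $\theta_i\ge-2$, $\theta_{i-1}\le 0$ are only used to guarantee $\theta_i\theta_{i-1}\ge 0$, hence $k+\theta_i\theta_{i-1}>0$, so that $r>0$ and the floor in the denominator is a positive integer (a point worth stating explicitly, since the chain $\lfloor nr\rfloor\le nr<\frac{n}{k+1}\Rightarrow \frac{n}{\lfloor nr\rfloor}>k+1$ requires $\lfloor nr\rfloor\ge 1$). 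What the paper's longer computation buys is the exact value of the ceiling of the ratio, which is mildly more information but not needed for the corollary.
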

    \begin{proof}
        If $\theta_i = -1$ we obtain:
        \begin{equation*}
            \chi_2(G) \geq \frac{(\theta_0 - \theta_i)(\theta_0 - \theta_{i-1})}{\theta_0 +\theta_i\theta_{i-1}} = \frac{(k +1)(k - \theta_{i-1})}{k -\theta_{i-1}} = k+1.
        \end{equation*}
        Now, assume $\theta_i < -1$. We write $\theta_i =-1-\epsilon_1$ and $\theta_{i-1}=-1+\epsilon_2$ for $0<\epsilon_1 \leq 1$ and $0<\epsilon_2 \leq 1$.
         By Corollary \ref{cor:ratio_chi_2_reg}, we get
            \begin{align*}
                \chi_2(G) \geq & \frac{(\theta_0 - \theta_i)(\theta_0 - \theta_{i-1})}{\theta_0 +\theta_i\theta_{i-1}}=\frac{(k+1+\epsilon_1 -\epsilon_2)(k+1) -\epsilon_1\epsilon_2}{k+1+\epsilon_1-\epsilon_2 -\epsilon_1\epsilon_2}\\
                = &\frac{(k+1+\epsilon_1 -\epsilon_2-\epsilon_1\epsilon_2)(k+1)+k\epsilon_1\epsilon_2}{k+1+\epsilon_1 -\epsilon_2-\epsilon_1\epsilon_2}>k+1.
            \end{align*}
        On the other hand, since $\alpha:=k+1+\epsilon_1-\epsilon_2-(k+1)\epsilon_1\epsilon_2\geq 0$, we have
        \begin{align*}\frac{(\theta_0 - \theta_i)(\theta_0 - \theta_{i-1})}{\theta_0 +\theta_i\theta_{i-1}}&=\frac{(k+1+\epsilon_1 -\epsilon_2)(k+1) -\epsilon_1\epsilon_2}{k+1+\epsilon_1-\epsilon_2 -\epsilon_1\epsilon_2}\\
        &\le \frac{(k+1+\epsilon_1 -\epsilon_2)(k+1) -\epsilon_1\epsilon_2+\alpha}{k+1+\epsilon_1-\epsilon_2 -\epsilon_1\epsilon_2}\\
        &=k+2.
        \end{align*}
        Combining the above inequalities, we obtain
        \begin{equation*}
            k+2 \geq \frac{(\theta_0 - \theta_i)(\theta_0 - \theta_{i-1})}{\theta_0 +\theta_i\theta_{i-1}} > k+1,
        \end{equation*}
        and hence
        \begin{equation*}
            \left\lceil\frac{(\theta_0 - \theta_i)(\theta_0 - \theta_{i-1})}{\theta_0 +\theta_i\theta_{i-1}}\right\rceil = k+2.\qedhere
        \end{equation*}
    \end{proof}

    Using Corollary \ref{cor:eig_eps_bound}, we can illustrate how the bound from Corollary \ref{cor:ratio_chi_2_reg} performs on $G(n,q)$, for $q \geq 4$, by showing that it has eigenvalues within a certain range.

\begin{theorem}\label{thm:Gnq_chi2_behaviour}
        If $q\ge 4$, then the distance-$2$ chromatic number of $G(n,q)$ satisfies:
        \begin{equation*}
            \chi_2(G(n,q)) \geq \begin{cases}
                2n+1 &\text{ if } -1 \text{ is an eigenvalue of } G(n,q),\\
                2n+2 &\text{ else}, \\
                2n+3 &\text{ if } n=1,q=5.
            \end{cases}
        \end{equation*}
    \end{theorem}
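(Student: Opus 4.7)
The proof plan is to apply the Ratio-type bound in Corollary \ref{cor:ratio_chi_2_reg} (most often via its Corollary \ref{cor:eig_eps_bound} form) to $G(n,q)$, splitting into cases according to whether $-1$ is an eigenvalue and handling individually the five pairs excluded from Lemma \ref{lem:Gnq_close_eigs}. Throughout, $G(n,q)$ is $2n$-regular (Cartesian product of $n$ copies of the 2-regular cycle $C_q$), so $\theta_0=k=2n$.

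If $-1$ is an eigenvalue, then the largest eigenvalue $\le -1$ is $\theta_i=-1$ itself, and any eigenvalue $\theta_{i-1}>-1$ (which exists because $\theta_0=2n$) will do. The numerator and denominator of the Ratio-type bound share the factor $k-\theta_{i-1}$, so they telescope to $k+1$, and Corollary \ref{cor:ratio_chi_2_reg} gives $\chi_2(G(n,q))\ge 2n+1$. This immediately handles every $(n,q)$ with $-1$ in the spectrum, including the exceptional pairs $(2,5)$, $(1,6)$, $(1,9)$, for which one verifies via Corollary \ref{cor:Gnq_spectrum} that $-1$ is indeed an eigenvalue (even though for these three pairs Corollary \ref{cor:eig_eps_bound} itself is \emph{not} applicable, because the eigenvalue just above $-1$ is strictly positive).

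If $-1$ is not an eigenvalue and $(n,q)\notin\{(1,5),(2,7)\}$, Lemma \ref{lem:Gnq_close_eigs} supplies eigenvalues $\theta_1^*,\theta_2^*$ with $0\ge \theta_1^*>-1$ and $-1\ge\theta_2^*\ge -2$. Because $-1$ is assumed not to be an eigenvalue, the inequality $\theta_2^*<-1$ is strict, so the largest eigenvalue $\le -1$ of $G(n,q)$ satisfies $-2\le\theta_2^*\le\theta_i<-1$ while the next larger one satisfies $-1<\theta_{i-1}\le\theta_1^*\le 0$. This is exactly the setting of Corollary \ref{cor:eig_eps_bound} in the ``else'' clause, so $\chi_2(G(n,q))\ge k+2=2n+2$.

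It remains to handle the two pairs $(1,5)$ and $(2,7)$ by direct spectral computation through Corollary \ref{cor:Gnq_spectrum}. For $(2,7)$, set $a=2\cos(2\pi/7)$, $b=2\cos(4\pi/7)$, $c=2\cos(6\pi/7)$; using $a+b+c=-1$ together with standard product-to-sum identities, the choice $\theta_i=b+c$, $\theta_{i-1}=2b$ in Corollary \ref{cor:ratio_chi_2_reg} gives $\theta_0+\theta_i\theta_{i-1}=6$ exactly and $(\theta_0-\theta_i)(\theta_0-\theta_{i-1})>30$, whence with $q^n=49$ vertices the bound yields $\chi_2(G(2,7))\ge 6=2n+2$. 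For $(1,5)$, the eigenvalues $\theta_i=-(1+\sqrt{5})/2$ and $\theta_{i-1}=(\sqrt{5}-1)/2$ of $C_5$ satisfy $\theta_i\theta_{i-1}=-1$ exactly, so the Ratio-type bound collapses to $\chi_2(C_5)\ge 5=2n+3$ (consistent with the trivial observation that $C_5^{\,2}=K_5$). The real obstacle is precisely the pair $(1,5)$: it is the unique instance where the required bound $2n+3$ is strictly stronger than the generic $2n+2$, and there is no general mechanism producing the extra $+1$ — it comes entirely from the golden-ratio identity $\theta_i\theta_{i-1}=-1$, an algebraic coincidence of $C_5$ that cannot be replicated elsewhere.
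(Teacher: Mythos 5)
Your proof is correct and follows essentially the same route as the paper: Lemma \ref{lem:Gnq_close_eigs} combined with Corollary \ref{cor:eig_eps_bound} for the generic case, plus direct treatment of the exceptional pairs. Your reorganization---handling every $(n,q)$ with $-1$ in the spectrum uniformly via the telescoping in Corollary \ref{cor:ratio_chi_2_reg} (which shrinks the list of pairs needing ad hoc verification from five to two) and then computing $G(2,7)$ and $G(1,5)$ with exact algebra where the paper only says ``manually check''---is a minor but welcome tightening of the same argument.
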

    \begin{proof}
        For $(n,q) \notin \{(1,5),(2,5),(1,6),(2,7),(1,9)\}$, we know from Lemma \ref{lem:Gnq_close_eigs} that $G(n,q)$ has eigenvalues $\theta_1^*,\theta_2^*$ such that $0 \geq \theta_1^* > -1$ and $-1 \geq \theta_2^* \geq -2$. 
        Applying Corollary \ref{cor:eig_eps_bound} to these cases gives us the desired result.

        This leaves us with the cases of $G(1,5), G(2,5), G(1,6), G(2,7)$ and $G(1,9)$. For these we can manually check to find the bounds:
        \begin{align*}
            \alpha_2(G(1,5)) \geq 5, \\ 
            \alpha_2(G(2,5)) \geq 5, \\
            \alpha_2(G(1,6)) \geq 3, \\
            \alpha_2(G(2,7)) \geq 6, \\
            \alpha_2(G(1,9)) \geq 3.
        \end{align*}
        Since $G(2,7)$ does not have $-1$ as an eigenvalue, and $G(2,5)$ and $G(1,6)$ and $G(1,9)$ do have $-1$ as an eigenvalue, these four cases also behave as the rest. This means that $G(1,5)$ is the only exception.
    \end{proof}

    Additionally, we can show that the bound from Corollary \ref{cor:ratio_chi_2_reg} is tight in certain cases, by using part of Theorem \ref{thm:kim_G3q} from Kim and Kim \cite{kim_2-distance_2011}.
    
    \begin{proposition}\label{thm:Gn7l_tight}
        Let $l \in \mathbb{N}^+$, then 
        \begin{equation*}  
        \chi_2(G(3,7l)) = \frac{(\theta_0 - \theta_i)(\theta_0 - \theta_{i-1})}{\theta_0 +\theta_i\theta_{i-1}}.
        \end{equation*}
    \end{proposition}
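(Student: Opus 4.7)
The plan is to combine the exact value $\chi_2(G(3,7l)) = 7$ from Theorem \ref{thm:kim_G3q} with a direct computation of the Ratio-type bound for $G(3,7l)$. Since $G(3,7l)$ is $2n$-regular with $n=3$, we have $\theta_0 = 6$. The key reduction is that once we can identify $\theta_i = -1$ as the largest eigenvalue $\leq -1$, the bound collapses:
\begin{equation*}
\frac{(\theta_0-\theta_i)(\theta_0-\theta_{i-1})}{\theta_0+\theta_i\theta_{i-1}} = \frac{(6+1)(6-\theta_{i-1})}{6-\theta_{i-1}} = 7,
\end{equation*}
independently of the specific value of $\theta_{i-1}$ (we only need $\theta_{i-1}\neq 6$, which is automatic since $\theta_{i-1}$ is a distinct eigenvalue strictly smaller than $\theta_0$). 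So the whole task reduces to verifying that $-1$ belongs to the spectrum of $G(3,7l)$ for every positive integer $l$.

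First I would invoke Lemma \ref{lem:Gnq_-1_eig}, which characterizes this membership as $n=3 \in W'(q)$ with $q=7l$. I split into two cases according to the parity of $l$. If $l$ is odd then $q$ is odd, and writing $q=p_1^{a_1}\cdots p_r^{a_r}$ we have $7 \in \{p_1,\ldots,p_r\}$; taking $y_j=1$ for the index $j$ with $p_j=7$ and $y_k=0$ otherwise gives $-1+\sum y_i p_i = 6 = 2\cdot 3$, with $\sum y_i = 1$ odd, so $3 \in T(q) = W'(q)$. If $l$ is even then $q$ is even, so $p_1=2$ and $7 = p_j$ for some $j\geq 2$; taking the $4\mathbb N$-part equal to $0$, $y_j=1$ for $p_j=7$, and all other $y_i=0$, yields $-1+0+7=6 = 2\cdot 3$, and the parity condition $4a+\sum_{i\geq 2}y_ip_i = 7$ odd is satisfied. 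In both cases $3 \in W'(7l)$ and $-1$ is an eigenvalue of $G(3,7l)$.

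With $\theta_i=-1$ established and the collapse of the formula computed above, the bound of Corollary~\ref{cor:ratio_chi_2_reg} evaluates to $7$, while Theorem~\ref{thm:kim_G3q} gives $\chi_2(G(3,7l)) = 7$. Equality of the two follows immediately.

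I do not expect a serious obstacle here: all the heavy lifting (the characterization of when $-1$ is in the spectrum, and the exact value of $\chi_2(G(3,7l))$) is already in hand; the proof is essentially a short verification. The only care point is ensuring that $\theta_{i-1}$ exists and is not equal to $\theta_0$, which is immediate since $\theta_0 = 6 > -1$ guarantees at least one eigenvalue strictly greater than $-1$, and in fact the formula is insensitive to which such eigenvalue it is.
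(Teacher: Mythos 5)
Your proposal is correct and follows essentially the same route as the paper: establish that $-1$ is an eigenvalue of $G(3,7l)$ via Lemma \ref{lem:Gnq_-1_eig} (the paper simply notes $3=\tfrac{1}{2}(-1+7)\in W'(7l)$, without your parity case split), conclude the Ratio-type bound equals $7$ (the paper cites the $\theta_i=-1$ case of Corollary \ref{cor:eig_eps_bound}, whose computation you inline), and match it against $\chi_2(G(3,7l))=7$ from Theorem \ref{thm:kim_G3q}.
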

    \begin{proof}
        Lemma \ref{lem:Gnq_-1_eig} tells us $3 = \frac{1}{2} (-1+7) \in W'(7l)$, thus $-1$ is an eigenvalue of $G(3,7l)$. Hence, by Corollary \ref{cor:eig_eps_bound} we have $\chi_2(G(3,q)) \geq 7$, which Theorem \ref{thm:kim_G3q} tells us is tight.
    \end{proof}

\subsubsection*{Computational results}

In Tables \ref{tab:G3q_chi2}, \ref{tab:G3q_chi3}, \ref{tab:G3q_chi4} and \ref{tab:G4q_chi2} in the Appendix,  the results obtained by applying the bounds of Section \ref{sec:chit_Gnq} are presented. These were computed using \textsc{SageMath}. Where applicable, we also added the best known lower bounds on $\chi_t(G)$, obtained from the best known upper bounds on $A_q^L(n,d)$ as seen in \cite{astola_bounds_2013} and \cite{polak_semidefinite_2019}. If for some graph, computing the theoretical value, or solving the LP \eqref{eq:ratio_chi_LP_walk} took more than 30 minutes, the corresponding entry in the table is replaced by ``time''. Unfortunately, since the number of vertices of $G(n,q)$ blows up quite rapidly for $n>3$, we do not have many data points in Table \ref{tab:G4q_chi2}. Even applying the theoretically obtained bound from Corollary \ref{cor:ratio_chi_2_reg} becomes computationally intensive due to the spectrum, which is made up of sums of cosines. Indeed, in theory LP \eqref{eq:ratio_chi_LP_walk} should be able to handle quite large graphs (see \cite{abiad_eigenvalue_2023}), however for this the spectrum needs to be manually supplied, as the built in Sage function is rather slow. For comparison, we add the previously best known bounds from \cite{astola_bounds_2013} and \cite{polak_semidefinite_2019}.
We should note that computing our eigenvalue bounds (using the closed formulas from Corollaries \ref{cor:ratio_chi_2_reg} and \ref{cor:ratio_chi_3_reg}, or for larger $t$, LP \eqref{eq:ratio_chi_LP_walk}) is, for small graphs like the ones we tested,
significantly faster than solving an SDP (like the one proposed in \cite{polak_semidefinite_2019}), and in many cases our bounds perform fairly well, as shown in Tables \ref{tab:G3q_chi2}, \ref{tab:G3q_chi4}, \ref{tab:G3q_chi4}, \ref{tab:G4q_chi2}.

\section{A characterization of perfect Lee codes with minimum distance $3$}\label{sec:existenceperfectLeecodes}

  Let $B_r(c)=\{b \in \mathcal A_q^n : d_L(b,c) \leq r \}$ denote the ball of radius $r$ centered in $c$. Then, we define the \emph{packing radius} of $C$ as the largest $r$ such that $B_r(c)\cap B_r(c') = \emptyset$ for every $c,c'\in C$ with $c\neq c'$. We define the \emph{covering radius} of $C$ as the smallest $r$ such that $\bigcup_{c \in C} B_r(c) = \mathcal A_q^n$. A Lee code $C \subset \mathcal A_q^n$ is called a \emph{perfect Lee code} if the packing and covering radius of $C$ coincide. In other words, the balls centered in the codewords of a perfect Lee code with radius equal to the covering radius give a perfect tiling of the whole ambient space $\mathcal A_q^n$. Clearly, for this to be possible, $d$ must be odd.

    The problem of finding perfect Lee codes has a long history, dating back to 1968 when Golomb and Welch \cite{golomb_perfect_1970} made the following conjecture:
    \begin{conjecture}[{\cite[Section 7]{golomb_perfect_1970}}]\label{con:golomb_welch}
        Let $d$ be odd. There exist no perfect $(n,M,d)_q$-Lee codes for $n \geq 3$, $q \geq d \geq 5$.
    \end{conjecture}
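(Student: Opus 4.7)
The plan is to exploit the correspondence between perfect Lee codes and extremal values of the $t$-independence number of the Lee graph, given by Corollary~\ref{cor:alpha_AL_equiv}, combined with the spectral and number-theoretic tools developed in Sections~\ref{sec:Gnq_properties} and~\ref{sec:chit_Gnq}. A perfect $(n,M,d)_q$-Lee code with $d=2e+1$ exists if and only if $\alpha_{2e}(G(n,q)) = q^n/V_e(n)$, where $V_e(n)$ denotes the cardinality of a Lee ball of radius $e$ (which, for $q\ge d$, coincides with the $\ell_1$-ball in $\mathbb{Z}^n$). A first necessary condition is the divisibility $V_e(n)\mid q^n$, and I would begin by ruling out parameter triples $(n,q,d)$ via a careful analysis of the prime factorization of $V_e(n)$ against that of $q$, exploiting that $V_e(n)$ is a polynomial of degree $e$ in $n$.

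Next, I would apply the Ratio-type bound of Theorem~\ref{thm:ratio_chi_reg} (in its $\alpha_t$-analogue form) to $G(n,q)$. For equality to hold, one needs not only the numerical identity $\alpha_{2e}(G(n,q))=q^n/V_e(n)$, but also the structural equality conditions: the optimal polynomial $p$ must satisfy $(p(A))_{uu}=W(p)$ for every $u\in V$ (which is automatic by walk-regularity, Lemma~\ref{lemma:wr}), and the indicator vector of the code must lie in the $\lambda(p)$-eigenspace. Using the explicit spectrum of $G(n,q)$ from Corollary~\ref{cor:Gnq_spectrum} together with the Lam--Leung theorem (Theorem~\ref{thm:LamLeung}) to control which values can arise as eigenvalues, I would attempt to show that these spectral equality conditions cannot be simultaneously met for $n\ge 3$, $q\ge d\ge 5$, deriving contradictions from the incompatibility between the required eigenspace dimensions and the binomial structure of $V_e(n)$.

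For triples not eliminated by the divisibility and spectral conditions, I would fall back on the classical lattice interpretation: a perfect Lee code of distance $d=2e+1$ with $q\ge d$ induces a lattice tiling of $\mathbb{Z}^n$ by the cross-polytope $B_e$, so the question reduces to the Golomb--Welch tiling problem and one can attempt (or emulate) the geometric arguments from the literature, such as those handling $e=2$ via the structure of centers of the surrounding balls.

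The hard part is transparent: the full Golomb--Welch conjecture has remained open since 1968, so a spectral attack alone is almost certainly insufficient. My honest expectation is that the machinery assembled in this paper suffices to completely settle the $d=3$ regime (matching the section title), but that for $d\ge 5$ the spectral bounds can at best reproduce already-known partial results. The essential obstacle is that the extremality of a perfect Lee code imposes geometric rigidity whose translation into spectral language loses information about the underlying lattice; bridging this gap is precisely what Conjecture~\ref{con:golomb_welch} demands, and is where any plan built on the present toolkit is most likely to break down.
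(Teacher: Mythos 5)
This statement is the Golomb--Welch conjecture, which the paper does not prove: it is stated as a conjecture and the text immediately afterwards says it ``is still widely open.'' There is therefore no proof in the paper to compare your proposal against, and your proposal is not a proof either --- as you yourself acknowledge in your final paragraph. What the paper actually establishes is the complementary case $d=3$ (Theorem~\ref{thm:golomb_d=3}), which is explicitly excluded by the hypotheses $q \ge d \ge 5$ of the conjecture, and that result goes in the opposite direction: it characterizes when perfect codes \emph{do} exist rather than proving nonexistence.

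Concretely, each stage of your plan supplies only necessary conditions that are known to be insufficient for $d\ge 5$. The divisibility condition $V_e(n)\mid q^n$ eliminates only sporadic parameter triples and is the Lee-metric analogue of the sphere-packing integrality condition, which famously does not settle perfection questions on its own. The equality analysis of the Ratio-type bound (Theorem~\ref{thm:ratio_chi_reg}) requires the code's indicator vector to lie in a prescribed eigenspace, but for $t\ge 4$ the paper has no closed-form optimal polynomial (it resorts to the LP \eqref{eq:ratio_chi_LP_walk} case by case), and even when the optimal polynomial is known, the bound $\alpha_t \le \lfloor n(W(p)-\lambda(p))/(p(\lambda_1)-\lambda(p))\rfloor$ typically exceeds $q^n/V_e(n)$, so no contradiction is extracted. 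Finally, the reduction to lattice tilings of $\mathbb{Z}^n$ by cross-polytopes is not a fallback but a restatement of the original Golomb--Welch problem. Your diagnosis of where the plan breaks down is accurate; the conclusion to draw is that no proof should be attempted here, since the paper presents this as an open problem and only uses it as context for its $d=3$ characterization.
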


This conjecture is still widely open.  We refer the interested reader to the extensive survey given by  
Horak and Kim \cite{horak_50_2018}.
In this section, however, we will focus on the existence of perfect Lee codes with minimum Lee distance $d=3$, which is not mentioned in Conjecture \ref{con:golomb_welch}. This is due to the fact that perfect $(n,M,3)_q$-Lee codes exist in some cases. This was already shown by Golomb and Welch themselves in \cite{golomb_perfect_1970}. The state-of-the-art on perfect Lee codes of minimum distance $d=3$ is discussed in \cite[Section B]{horak_50_2018}. In this regard, Kim and Kim \cite{kim_2-distance_2011} made several key observations on the existence of perfect $(n,M,3)_q$-Lee codes, and their connection to $\chi_2(G(n,q))$. Around the same time, AlBdaiwi, Horak and Milazzo \cite{AHM09} characterized the parameters $n$ and $q$ such that there exist a \emph{linear} perfect $(n,M,3)_q$-Lee code. In this section we completely settle the problem of the existence of perfect $(n,M,3)_q$-Lee codes, dropping the linearity assumption.

We first recall the following result of Kim and Kim, who gave a characterization of the parameters for which perfect Lee codes exists in terms of $\chi_2(G(n,q))$. 

\begin{theorem}[{\cite[Theorem 2.2]{kim_2-distance_2011}}]\label{thm:kim_Gnq_perfect}
        There exists a perfect $(n,M,3)_q$-Lee code if and only if $\chi_2(G(n,q)) = 2n+1$.
    \end{theorem}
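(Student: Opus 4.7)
The plan is to prove both directions via the sphere-packing (Hamming) bound in the Lee metric: any subset $C\subseteq\mathcal A_q^n$ of minimum Lee distance at least $3$ satisfies $|C|(2n+1)\le q^n$, because the balls $B_1(c)$ for $c\in C$ are pairwise disjoint and each has cardinality $2n+1$. Combined with the elementary lower bound $\chi_2(G(n,q))\ge 2n+1$ already recorded in the excerpt, both implications reduce to short counting arguments.

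For the forward direction, assume $\chi_2(G(n,q))=2n+1$. A $(2n+1)$-coloring of $G(n,q)^2$ partitions $\mathcal A_q^n$ into classes $V_1,\ldots,V_{2n+1}$ which, by Corollary \ref{cor:indep_Gnq_equiv}, are Lee codes of minimum distance at least $3$. The sphere-packing bound yields $|V_i|\le q^n/(2n+1)$ for each $i$, and summing,
\[
q^n=\sum_{i=1}^{2n+1}|V_i|\le (2n+1)\cdot\frac{q^n}{2n+1}=q^n,
\]
so equality holds in every class. Thus, for each $V_i$, the radius-$1$ balls centred at its elements are disjoint and of total cardinality $q^n$; they therefore tile $\mathcal A_q^n$, forcing covering radius $\le 1$ and packing radius $\ge 1$, whence both equal to $1$. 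Each $V_i$ is thus a perfect $(n,q^n/(2n+1),3)_q$-Lee code.

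For the converse, let $C$ be a perfect $(n,M,3)_q$-Lee code, so that the balls $\{B_1(c):c\in C\}$ tile $\mathcal A_q^n$. I will exhibit an explicit distance-$2$ coloring with $2n+1$ colors; together with the lower bound this gives the equality. Label the $2n+1$ points of $B_1(\mathbf 0)$ as $\mathbf 0,\pm e_1,\ldots,\pm e_n$, where $e_i$ is the $i$-th standard basis vector read modulo $q$. Each $v\in\mathcal A_q^n$ admits a unique representation $v=c_v+\delta_v$ with $c_v\in C$ and $\delta_v\in B_1(\mathbf 0)$; color $v$ by $\delta_v$. If two distinct $u,v$ receive the same color, then $u-v=c_u-c_v\in C-C$ is nonzero, and translation-invariance of the Lee metric gives $d_L(u,v)=d_L(c_u,c_v)\ge 3>2$. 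Hence the coloring is proper on $G(n,q)^2$.

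I do not anticipate a serious obstacle. The only subtlety is the end of the forward direction, where one must verify that attaining the sphere-packing bound truly forces \emph{perfection} (packing and covering radii both equal to $1$) rather than merely producing a code of minimum distance $\ge 3$; the tiling observation handles this. A minor implicit assumption is $q\ge 3$, needed so that $|B_1(\mathbf 0)|=2n+1$, but this is standing in the section since $G(n,q)$ is defined as an $n$-fold product of $q$-cycles.
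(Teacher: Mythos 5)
The paper does not prove this statement; it is quoted verbatim from Kim and Kim \cite[Theorem~2.2]{kim_2-distance_2011}, so there is no in-paper proof to compare against. Your argument is correct and is essentially the standard proof of that cited result: sphere-packing forces every colour class of a $(2n+1)$-colouring to meet the bound $q^n/(2n+1)$ and hence tile by radius-$1$ balls, while conversely the $2n+1$ translates $C+\delta$, $\delta\in B_1(\mathbf 0)$, of a perfect code partition $\mathcal A_q^n$ into distance-$2$ independent sets; your caveat that $q\ge 3$ is needed for $|B_1(\mathbf 0)|=2n+1$ is the right one and is implicit throughout the section.
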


This link allows us to derive nonexistence results of perfect $(n,m,3)_q$-Lee codes in terms of $n$ and $q$, using the eigenvalue results of Theorem \ref{thm:Gnq_chi2_behaviour}.

\begin{corollary}\label{cor:-1_nonexistence}
        Let $q\geq 4$ and let $W'(q)$ as defined in Lemma \ref{lem:Gnq_-1_eig}. If $n \notin W'(q)$ then there is no perfect $(n,M,d)_q$-Lee code.
    \end{corollary}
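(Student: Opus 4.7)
The plan is to derive the statement as a direct consequence of two results already in hand: the Kim--Kim characterization of perfect $(n,M,3)_q$-Lee codes in terms of the distance-$2$ chromatic number (Theorem \ref{thm:kim_Gnq_perfect}), and the eigenvalue lower bound on $\chi_2(G(n,q))$ from Theorem \ref{thm:Gnq_chi2_behaviour}. The whole argument is a short chain of implications, so I expect this to be a quick corollary rather than requiring genuinely new input.

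The first step is to translate the hypothesis $n\notin W'(q)$ into a purely spectral statement. By the discussion immediately preceding Lemma \ref{lem:Gnq_-1_eig}, the value $-1$ is an eigenvalue of $G(n,q)$ if and only if $n\in W'(q)$. Hence assuming $n\notin W'(q)$ is the same as assuming that $-1$ does not occur in the adjacency spectrum of $G(n,q)$.

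The second step is to feed this into Theorem \ref{thm:Gnq_chi2_behaviour}: under this assumption one obtains $\chi_2(G(n,q))\geq 2n+2$. The single exceptional case to check is $(n,q)=(1,5)$, where the bound is instead $\chi_2(G(1,5))\geq 2n+3$; since one readily verifies that $1\notin W'(5)$, this exception is consistent with the hypothesis and still delivers the strict inequality $\chi_2(G(n,q))>2n+1$.

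The third and final step is to invoke Theorem \ref{thm:kim_Gnq_perfect}, which states that a perfect $(n,M,3)_q$-Lee code exists if and only if $\chi_2(G(n,q))=2n+1$. The strict inequality established above rules this out, completing the argument. I do not foresee any substantive obstacle; the only subtlety is the bookkeeping for the $(1,5)$ exception, which as noted above creates no actual problem.
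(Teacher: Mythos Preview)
Your proposal is correct and follows essentially the same approach as the paper: translate $n\notin W'(q)$ into the spectral condition that $-1$ is not an eigenvalue of $G(n,q)$, apply Theorem~\ref{thm:Gnq_chi2_behaviour} to get $\chi_2(G(n,q))\geq 2n+2$, and conclude via Theorem~\ref{thm:kim_Gnq_perfect}. Your explicit treatment of the $(n,q)=(1,5)$ exception is a small extra bit of care that the paper omits, but it changes nothing substantive.
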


\begin{proof}
Since $n \notin W'(q)$, $G(n,q)$ does not have $-1$ as an eigenvalue.  
    Theorem \ref{thm:Gnq_chi2_behaviour} tells us that if $q \geq 4$ and $-1$ is not an eigenvalue of $G(n,q)$, then $\chi_2(G(n,q)) \geq 2n+2$. We conclude the proof using Theorem \ref{thm:kim_Gnq_perfect}.
\end{proof}

Unfortunately, the condition given in Corollary \ref{cor:-1_nonexistence} for the nonexistence of perfect Lee codes of minimum distance $d=3$ is not improving on the previously known results. In particular, it is weaker than the following result.

\begin{proposition}[{\cite[Corollary 2.5]{kim_2-distance_2011}}]\label{prop:kim_Gnq_nonexistence}
        If there exists a perfect $(n,M,3)_q$-Lee code, then $2n+1$ divides $q^n$.  
\end{proposition}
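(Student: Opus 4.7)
The plan is to use the standard sphere-packing argument, suitably adapted to the Lee metric. The starting observation is that a code with minimum Lee distance $d=3$ has packing radius $r = \lfloor (d-1)/2 \rfloor = 1$: any two distinct codewords $c,c'\in C$ satisfy $\mathrm{d}_{\mathrm{L}}(c,c')\ge 3$, so by the triangle inequality the Lee balls $B_1(c)$ and $B_1(c')$ are disjoint. Assuming the code $C$ is perfect, the covering radius also equals $1$, so the family $\{B_1(c)\}_{c\in C}$ is in fact a partition of the ambient space $\mathcal A_q^n$.

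The next step is to compute $|B_1(c)|$ for a single $c\in \mathcal A_q^n$. By translation-invariance of the Lee distance, this number does not depend on $c$, so we may take $c=0$. The ball $B_1(0)$ contains $0$ itself together with those vectors that differ from $0$ in exactly one coordinate by either $+1$ or $-1$ modulo $q$. For $q\ge 3$ these shifts are distinct, so each of the $n$ coordinates contributes two unit-weight vectors, giving
\[
|B_1(c)| \;=\; 1 + 2n.
\]
(The case $q=2$ collapses $\pm 1$ to the same element, reducing $G(n,q)$ to a hypercube; it is not relevant here since we are in the Lee setting with the formula for $|B_1|$ above.)

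Combining the two ingredients, the disjoint-union partition forces
\[
\sum_{c\in C} |B_1(c)| \;=\; M(2n+1) \;=\; q^n,
\]
and hence $2n+1 \mid q^n$, which is exactly the claimed divisibility. There is no real obstacle in this argument: the only care required is to verify that the Lee ball of radius $1$ has size $2n+1$ for $q\ge 3$, after which the conclusion is immediate from the definition of a perfect code.
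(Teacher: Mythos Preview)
Your sphere-packing argument is correct. The paper does not supply its own proof of this proposition; it is simply quoted from \cite{kim_2-distance_2011} and then used as an input to Theorem~\ref{thm:golomb_d=3}. What you wrote is precisely the standard argument behind the cited result: the radius-$1$ Lee balls around codewords are pairwise disjoint by the triangle inequality, perfection forces them to cover $\mathcal A_q^n$, and for $q\ge 3$ each such ball has size $2n+1$, whence $M(2n+1)=q^n$ and the divisibility follows. Your remark that the case $q=2$ must be excluded (there $|B_1|=n+1$, not $2n+1$) is also to the point; the implicit standing assumption in this part of the paper, inherited from \cite{kim_2-distance_2011}, is $q\ge 3$.
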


Nevertheless, next we show that we can completely characterize the values of $n$ and $q$ such that there exist perfect $(n,M,3)_q$-Lee codes. We will do it by exploiting the following characterization results concerning \emph{linear} Lee codes obtained in \cite{AHM09}. We can identify the alphabet $\mathcal A_q$ with the ring $\mathbb Z/q\mathbb Z$ of integers modulo $q$. A \emph{linear} code is a subset $C\subseteq (\mathbb Z/q\mathbb Z)^n$ which is also a $\mathbb Z/q\mathbb Z$-submodule.

\begin{theorem}[{\cite[Theorem 15]{AHM09}}]\label{thm:perfect_linear_codes}
        Let $2n+1=r_1^{a_1} \cdots  r_t^{a_t}$ be the prime factorization of $2n+1$.
        Then, there exists a linear perfect $(n,M,3)_q$-Lee code if and only if  $r_1\cdots r_t$ divides $q$.
      \end{theorem}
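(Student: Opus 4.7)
The plan is to reinterpret linear perfect Lee codes of minimum distance $3$ as kernels of suitable $\mathbb{Z}/q\mathbb{Z}$-module homomorphisms, thereby reducing the existence question to an algebraic condition on abelian groups. Concretely, given a linear perfect $(n,M,3)_q$-Lee code $C\subseteq (\mathbb{Z}/q\mathbb{Z})^n$, I would form the quotient module $G:=(\mathbb{Z}/q\mathbb{Z})^n/C$; since the Lee-balls of radius $1$ (of size $2n+1$) tile the ambient space, this forces $|G|=2n+1$. Conversely, if $G$ is any $\mathbb{Z}/q\mathbb{Z}$-module of order $2n+1$ equipped with a surjective module map $\phi\colon(\mathbb{Z}/q\mathbb{Z})^n\to G$, then $C:=\ker\phi$ is a linear perfect $(n,q^n/(2n+1),3)_q$-Lee code \emph{provided} the $2n+1$ images $\{0,\pm\phi(e_1),\ldots,\pm\phi(e_n)\}$ are pairwise distinct, since then each coset of $C$ contains a unique ball element.

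For the ``only if'' direction, I would use that $G$, being a quotient of $(\mathbb{Z}/q\mathbb{Z})^n$, has exponent dividing $q$. By Cauchy's theorem (applied to its Sylow subgroups), $G$ contains an element of order $r_i$ for every prime divisor $r_i$ of $|G|=2n+1$. Hence each $r_i$ divides $\mathrm{exp}(G)$, and since the $r_i$ are distinct primes we conclude that $r_1\cdots r_t\mid \mathrm{exp}(G)\mid q$.

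For the ``if'' direction, assume $r_1\cdots r_t\mid q$ and take
\begin{equation*}
G:=(\mathbb{Z}/r_1\mathbb{Z})^{a_1}\times\cdots\times(\mathbb{Z}/r_t\mathbb{Z})^{a_t},
\end{equation*}
which has order $2n+1$ and exponent $r_1\cdots r_t$, and is therefore a $\mathbb{Z}/q\mathbb{Z}$-module. Because $|G|$ is odd, the involution $g\mapsto -g$ has $0$ as its only fixed point, so the $2n$ nonzero elements of $G$ split into $n$ pairs $\{g,-g\}$. Picking a representative $g_i$ from each pair and defining the module map $\phi\colon (\mathbb{Z}/q\mathbb{Z})^n\to G$ by $\phi(e_i):=g_i$, the set $\{0,\pm g_1,\ldots,\pm g_n\}$ coincides with $G$. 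Then $C:=\ker\phi$ has the desired properties: minimum Lee distance $\ge 3$ follows from $g_i\neq \pm g_j$ for $i\neq j$ together with $2g_i\neq 0$ (again using the oddness of $|G|$), and perfection follows from the size count $|C|\cdot(2n+1)=q^n$ combined with surjectivity of $\phi$.

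The main obstacle, and the one that requires the most care, is setting up cleanly the correspondence between linear perfect Lee codes of minimum distance $3$ and ``$\mathbb{Z}/q\mathbb{Z}$-module partitions'' of the Lee ball $\{0,\pm e_1,\ldots,\pm e_n\}$ via a syndrome map. Once this equivalence is in place, the arithmetic condition $r_1\cdots r_t\mid q$ emerges transparently as the existence criterion for an abelian group of order $2n+1$ whose exponent divides $q$, and both implications reduce to elementary group-theoretic facts.
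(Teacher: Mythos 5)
This statement is imported by the paper as an external result (cited to \cite[Theorem~15]{AHM09}) and is not proved in the text, so there is no in-paper argument to compare against; your reconstruction is correct and is essentially the standard syndrome/quotient-group argument underlying the cited source. Both directions check out: the correspondence between linear perfect codes of packing radius $1$ and surjections $\phi\colon(\mathbb{Z}/q\mathbb{Z})^n\to G$ with $|G|=2n+1$ that are injective on $\{0,\pm e_1,\dots,\pm e_n\}$ is right, Cauchy's theorem gives the ``only if'' direction, and the pairing $\{g,-g\}$ of the nonzero elements of an odd-order group gives the ``if'' direction. Two cosmetic points: the ball-size count $|B_1(0)|=2n+1$ requires $q\ge 3$ (for $q=2$ the Lee and Hamming metrics coincide and the statement is outside the intended scope), and to get minimum distance exactly $3$ rather than $\ge 3$ you should add the one-line observation that covering radius $1$ forces two codewords at distance at most $3$.
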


We can now put everything together, and show the main result of this section: a characterization of when a perfect Lee code of minimum distance $d=3$ exists. In terms of the distance-$2$ chromatic number of $G(n,q)$, this also improves on Theorem \ref{thm:Gnq_chi2_behaviour}.

\begin{theorem}\label{thm:golomb_d=3}
    Let $n,q \in \mathbb N$. The following are equivalent.
    \begin{enumerate}[label=\normalfont{(\arabic*)}]
        \item There exists a perfect $(n,M,3)_q$-Lee code.
        \item $2n+1$ divides $q^n$.
        \item $r_1\cdots r_t$ divides $q$, where $2n+1=r_1^{a_1}\cdots r_t^{a_t}$ is the prime factorization of $2n+1$. 
        \item $\chi_2(G(n,q))=2n+1$.
    \end{enumerate}
\end{theorem}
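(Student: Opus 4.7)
The plan is to prove the equivalence of the four conditions by arranging them in a short cycle of implications, with the bulk of the work already carried out by results cited earlier in this section. The equivalence $(1) \Leftrightarrow (4)$ is immediate from Theorem \ref{thm:kim_Gnq_perfect}, so I would set this pair aside and concentrate on the equivalences among $(1)$, $(2)$, and $(3)$.

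First I would observe that $(1) \Rightarrow (2)$ is exactly the content of Proposition \ref{prop:kim_Gnq_nonexistence}, and that $(3) \Rightarrow (1)$ follows directly from Theorem \ref{thm:perfect_linear_codes} by simply forgetting the linearity of the code: if $r_1 \cdots r_t$ divides $q$, then a linear perfect $(n,M,3)_q$-Lee code exists, and such a code is in particular a perfect Lee code, yielding the desired set $C$.

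The only remaining link, and the only step that is not already in the literature, is the implication $(2) \Rightarrow (3)$. This is a purely elementary number-theoretic statement: assume $2n+1 \mid q^n$ and write the prime factorization $2n+1 = r_1^{a_1} \cdots r_t^{a_t}$. For each prime $r_i$ one has $r_i \mid 2n+1 \mid q^n$, and since $r_i$ is prime Euclid's lemma gives $r_i \mid q$. As the primes $r_1, \ldots, r_t$ are pairwise distinct, their product $r_1 \cdots r_t$ divides $q$, which is exactly $(3)$. This closes the cycle $(3) \Rightarrow (1) \Rightarrow (2) \Rightarrow (3)$ and, together with $(1) \Leftrightarrow (4)$, proves the theorem.

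The main obstacle here is conceptual rather than technical: the novelty lies in recognizing that the radical-divides-$q$ criterion of AlBdaiwi, Horak and Milazzo, originally proved only for \emph{linear} codes, in fact characterizes the existence of not-necessarily-linear perfect Lee codes of minimum distance $3$, and that the missing gap between the known nonexistence bound of Kim and Kim and the linear-case construction collapses to a single application of Euclid's lemma. A pleasant consequence worth highlighting in the write-up is that, for distance $d=3$, the existence of a perfect Lee code is equivalent to the existence of a linear one, so linearity comes for free.
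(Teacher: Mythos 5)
Your proof is correct and follows essentially the same route as the paper: it reduces everything to Theorem \ref{thm:kim_Gnq_perfect}, Proposition \ref{prop:kim_Gnq_nonexistence}, and Theorem \ref{thm:perfect_linear_codes}, leaving only the elementary implication $(2)\Rightarrow(3)$ via Euclid's lemma, which is exactly the number-theoretic step the paper supplies. The only cosmetic difference is that the paper proves the full equivalence $(2)\Leftrightarrow(3)$ (using $a_i\le n$ for the converse), whereas your cyclic arrangement $(3)\Rightarrow(1)\Rightarrow(2)\Rightarrow(3)$ makes that extra direction unnecessary.
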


     \begin{proof}
      By Theorem \ref{thm:perfect_linear_codes} and Proposition \ref{prop:kim_Gnq_nonexistence}, it is enough to show the equivalence between (2) and (3). For this purpose, write $q=s_1^{b_1}\cdots s_{\ell}^{b_\ell}$. 
        
         If $r_1\cdots r_t$ divides $q$, then this means that $t\le \ell$ and up to reordering the $s_i$'s, we may assume $r_i=s_i$ for each $i=1,\ldots, t$. The claim follows observing that $a_i\le n$ for every $i$.

 On the other hand, if $2n+1=r_1^{a_1}\cdots r_t^{a_t}$ divides $q^n$, then, $r_1\cdots r_t$ must divide $s_1\cdots s_\ell$, and thus, also $q$.
     \end{proof}

Observe that by plugging $n=3$ in Theorem \ref{thm:golomb_d=3}, we immediately derive the second part of the statement in Theorem \ref{thm:kim_G3q}.

\bigskip

\subsection*{Acknowledgements}
Aida Abiad is supported by the Dutch Research Council through the grant VI.Vidi.213.085. Alessandro Neri is supported by the Research Foundation -- Flanders (FWO) through the grant 12ZZB23N.
The authors thank Sjanne Zeijlemaker for the discussions on the LP implementation for Theorem \ref{thm:ratio_chi}.


\newpage

\section*{Appendix}\label{appendix}

    \begin{table}[ht]
        \centering
        \footnotesize
        \subfloat[Performance of bounds for $\chi_2(Q_n)$.]{\begin{tabular}{ccc}
        \hline
        $n$ & Corollary \ref{cor:Qn_chi2_ngobound} \cite{ngo_new_2002}& $\chi_2(Q_n)$ \\
        \hline
         $\mathbf{2} $ & $\mathbf{4}$ & $4$ \cite{wan_near-optimal_1997} + \cite{best_triply_1977}\\
         $\mathbf{3}$ & $\mathbf{4}$ & $4$ \cite{wan_near-optimal_1997} + \cite{best_triply_1977}\\
         $\mathbf{4}$ & $\mathbf{8}$ & $8$ \cite{wan_near-optimal_1997} + \cite{best_triply_1977}\\
         $5$ & $7$ & $8$ \cite{wan_near-optimal_1997} + \cite{best_triply_1977}\\
         $\mathbf{6}$ & $\mathbf{8}$ & $8$ \cite{wan_near-optimal_1997} + \cite{best_triply_1977}\\
         $\mathbf{7}$ & $\mathbf{8}$ & $8$ \cite{wan_near-optimal_1997} + \cite{best_triply_1977}\\
         $8$ & $11$ & $13$ \cite{kokkala_chromatic_2017} \\
         $9$ & $11$ & $\in\{13,14\}$ \\
         $10$ & $13$ & $\in\{15,16\}$ \\
         $11$ & $13$ & $\in\{15,16\}$ \\
         $12$ & $15$ & $16$ \cite{wan_near-optimal_1997} + \cite{best_triply_1977}\\
         $13$ & $15$ & $16$ \cite{wan_near-optimal_1997} + \cite{best_triply_1977}\\
         $\mathbf{14}$ & $\mathbf{16}$ & $16$ \cite{wan_near-optimal_1997} + \cite{best_triply_1977}\\
         $\mathbf{15}$ & $\mathbf{16}$ & $16$ \cite{wan_near-optimal_1997} + \cite{best_triply_1977}\\
         \hline
        \end{tabular}
        \label{tab:Qn_t2}} \hspace{10mm}
        \centering
        \subfloat[Performance of bounds for $\chi_3(Q_n)$.]{        
         \hspace{10mm}
         \begin{tabular}{cccc}
        \hline
        $n$ & Corollary \ref{cor:Qn_chi3_ngobound} \cite{ngo_new_2002} & $\chi_3(Q_n)$ \\
        \hline
         $\mathbf{3}$ & $\mathbf{8}$ & $8$ \\
         $\mathbf{4}$ & $\mathbf{8}$ & $8$\\
         $5$ & $16$ & time\\
         $6$ & $13$ & time \\
         $7$ & $16$ & time \\
         $8$ & $16$ & time \\
         $9$ & $21$ & time \\
         $10$ & $21$ & time \\
         $11$ & $25$ & time \\
         $12$ & $25$ & time\\
         $13$ & $29$ & time\\
         $14$ & $29$ & time\\
         $15$ & $32$ & time\\
         \hline
         \ & \ & \ \\
        \end{tabular}
        \hspace{10mm}
        \label{tab:Qn_t3}} \\
        \centering
        \subfloat[Performance of bounds for $\chi_4(Q_n)$.]{\begin{tabular}{ccccc}
        \hline
        $n$ & Corollary \ref{cor:Qn_chit_ratiobound} & Theorem \ref{thm:Qn_chit_ngobound} \cite{ngo_new_2002} & $\chi_4(Q_n)$ \\
        \hline
         $\mathbf{4}$ & $\mathbf{16}$ & $\mathbf{16}$ & $16$ \\
         $\mathbf{5}$ & $\mathbf{16}$ & $\mathbf{16}$ & $16$ \\
         $6$ & $32$ & $32$ & time \\
         $7$ & $43$ & $43$ & time \\
         $8$ & $43$ & $43$ & time \\
         $9$ & $57$ & $52$ & time \\
         $10$ & $57$ & $69$ & time \\
         $11$ & $79$ & $69$ & time \\
         $12$ & $90$ & $86$ & time \\
         $13$ & $102$ & $106$ & time \\
         $14$ & $121$ & $107$ & time \\
         $15$ & $127$ & $128$ & time \\
         \hline
        \end{tabular}
        \label{tab:Qn_t4}} \hspace{5mm}
        \centering
        \subfloat[Performance of bounds for $\chi_5(Q_n)$.]{\begin{tabular}{ccccc}
        \hline
        $n$ & Corollary \ref{cor:Qn_chit_ratiobound} & Theorem \ref{thm:Qn_chit_ngobound} \cite{ngo_new_2002} & $\chi_5(Q_n)$ \\
        \hline
         $\mathbf{5}$ & $\textbf{32}$ & $\textbf{32}$ &  $32$ \\
         $6$ & $32$ & $32$ & $32$ \\
         $7$ & $64$ & $64$ & time \\
         $8$ & $86$ & $86$ & time \\
         $9$ & $86$ & $86$ & time \\
         $10$ & $114$ & $103$ & time \\
         $11$ & $114$ & $137$ & time \\
         $12$ & $158$ & $137$ & time \\
         $13$ & $179$ & $171$ & time \\
         $14$ & $203$ & $211$ & time \\
         $15$ & $241$ & $213$ & time \\
         \hline
         \ & \ & \ & \\
        \end{tabular}
        \label{tab:Qn_t5}}
        \caption{Performance of bounds for $\chi_t(Q_n)$.}
    \end{table}


 \begin{table}[htp!]
        \centering
        \footnotesize
        \subfloat[Performance of the Ratio-type bound for $\chi_2(G)$ on $G(3,q)$.]{\begin{tabular}{cccccc}
            \hline
            Graph & Corollary \ref{cor:ratio_chi_2_reg}  & $\chi_2$ \\
            \hline
             $\mathbf{G(3,3)}$ & $\mathbf{9}$ &  $9$ \\
             $\mathbf{G(3,4)}$ & $\mathbf{8}$& $8$ \\
             $G(3,5)$ & $8$ &$9$ \\
             $G(3,6)$ & $8$& $9$ \\
             $\mathbf{G(3,7)}$ & $\mathbf{7}$ & $7$ \\
             $\mathbf{G(3,8)}$ & $\mathbf{8}$ & $8$ \\
             $G(3,9)$ & $8$ & $\leq 9$ \\
             \hline
        \end{tabular}
        \label{tab:G3q_chi2}}\hspace{15mm}
        \centering
        \subfloat[Performance of the Ratio-type bound for $\chi_3(G)$ on $G(3,q)$.]{\begin{tabular}{ccccccc}
            \hline
            Graph & Corollary \ref{cor:ratio_chi_3_reg} & Best LB & $\chi_3$ \\
            \hline
            $\mathbf{G(3,3)}$ &  $\mathbf{27}$ & N/A & $27$ \\
            $G(3,4)$ & $13$   & N/A  & time \\
            $G(3,5)$ & $16$ & $18$ \cite{astola_bounds_2013} & time\\
            $G(3,6)$ & $12$   & $16$ \cite{polak_semidefinite_2019} & time\\
            $G(3,7)$ & $14$  & $17$ \cite{polak_semidefinite_2019} & time\\
            $G(3,8)$ & $13$  & N/A  & time \\
            $G(3,9)$ & $13$  & N/A  & time \\
             \hline
        \end{tabular}
        \label{tab:G3q_chi3}}\\
        \centering
        \subfloat[Performance of the Ratio-type bound for $\chi_4(G)$ on $G(3,q)$.]{\begin{tabular}{cccccc}
            \hline
            Graph & LP \eqref{eq:ratio_chi_LP_walk} &  Best LB & $\chi_4$ \\
            \hline
             $\mathbf{G(3,3)}$ & $1$ &  N/A & $27$ \\
             $G(3,4)$ & $32$ & N/A & time \\
             $G(3,5)$ & $32$& $42$ \cite{astola_bounds_2013} & time  \\
             $G(3,6)$ & $27$ & $36$ \cite{astola_bounds_2013} & time  \\
             $G(3,7)$ & $27$  & $35$ \cite{polak_semidefinite_2019} & time  \\
             $G(3,8)$ & $25$  & N/A & time \\
             $G(3,9)$ & time  & N/A & time \\
             \hline
        \end{tabular}
        \label{tab:G3q_chi4}}\hspace{5mm}
        \centering
        \subfloat[Performance of the Ratio-type bound for $\chi_2(G)$ on $G(4,q)$.]{\begin{tabular}{cccccc}
            \hline
            Graph & Corollary \ref{cor:ratio_chi_2_reg} & Best LB & $\chi_2$ \\
            \hline
             $\mathbf{G(4,3)}$ & $\mathbf{9}$ & N/A & $9$ \\
             $G(4,4)$ & $11$  & N/A & time \\
             $G(4,5)$ & $10$   & $11$ \cite{polak_semidefinite_2019}& time \\
             $G(4,6)$ & $9$   & $9$ \cite{astola_bounds_2013} & time \\
             \hline \\
             \ \\
             \ \\
             \end{tabular}
        \label{tab:G4q_chi2}}
        \caption{Performance of the Ratio-type bound for $\chi_t(G)$ on $G(n,q)$ compared to the best known lower bounds from \cite{astola_bounds_2013} and \cite{polak_semidefinite_2019}.}
    \end{table}

\end{document}